\documentclass[reqno, 12pt]{amsart}

\pdfoutput=1
\makeatletter
\let\origsection=\section \def\section{\@ifstar{\origsection*}{\mysection}} 
\def\mysection{\@startsection{section}{1}\z@{.7\linespacing\@plus\linespacing}{.5\linespacing}{\normalfont\scshape\centering\S}}
\makeatother        

\usepackage{amsmath,amssymb,amsthm}
\usepackage{mathrsfs}
\usepackage{mathtools}
\usepackage{mathabx}\changenotsign
\usepackage{dsfont}
\usepackage[foot]{amsaddr}
\usepackage{nicefrac}

\usepackage{graphicx}
\usepackage{xcolor}

\colorlet{myGreen}{green!50!black}
\colorlet{myLightgreen}{green}
\colorlet{myRed}{red!90!black}
\definecolor{myBlue}{rgb}{0.25, 0.0, 1.0}
\definecolor{myLightBlue}{rgb}{0.39, 0.58, 0.93}
\colorlet{myViolet}{myBlue!55!myRed}
\definecolor{myOrange}{rgb}{1.0, 0.66, 0.07}

\definecolor{CornflowerBlue}{rgb}{0.39, 0.58, 0.93}
\definecolor{DarkGoldenrod}{rgb}{0.72, 0.53, 0.04}
\definecolor{BritishRacingGreen}{rgb}{0.0, 0.26, 0.15}
\definecolor{DarkMagenta}{rgb}{0.55, 0.0, 0.55}
\definecolor{AO}{rgb}{0.0, 0.5, 0.0}
\definecolor{BostonUniversityRed}{rgb}{0.8, 0.0, 0.0}
\definecolor{myRed}{rgb}{0.8, 0.0, 0.0}
\definecolor{DarkMidnightBlue}{rgb}{0.0, 0.2, 0.4}
\definecolor{DarkTangerine}{rgb}{1.0, 0.66, 0.07}
\definecolor{AppleGreen}{rgb}{0.55, 0.71, 0.0}
\definecolor{BrightUbe}{rgb}{0.82, 0.62, 0.91}
\definecolor{Amethyst}{rgb}{0.6, 0.4, 0.8}
\definecolor{DarkGray}{rgb}{0.52, 0.52, 0.51}
\definecolor{Gray}{rgb}{0.66, 0.66, 0.66}
\definecolor{BananaYellow}{rgb}{1.0, 0.88, 0.21}
\definecolor{Amber}{rgb}{1.0, 0.75, 0.0}
\definecolor{LightGray}{rgb}{0.83, 0.83, 0.83}
\definecolor{PrincetonOrange}{rgb}{1.0, 0.56, 0.0}
\definecolor{DeepCarrotOrange}{rgb}{0.91, 0.41, 0.17}
\definecolor{MidnightBlue}{rgb}{0.1, 0.1, 0.44}
\definecolor{ElectricViolet}{rgb}{0.56, 0.0, 1.0}

\usepackage{verbatim}

\usepackage{enumitem}

\usepackage{hyperref}
\usepackage{cleveref}
\hypersetup{
    colorlinks,
    linkcolor={red!60!black},
    citecolor={green!60!black},
    urlcolor={blue!60!black},
}

\usepackage[open,openlevel=2,atend]{bookmark}

\input{_preamble/myBiblatex}

\usepackage[T1]{fontenc}
\usepackage{lmodern}
\usepackage[babel]{microtype}
\usepackage[english]{babel}

\linespread{1.3}
\usepackage{geometry}
\geometry{left=27.5mm,right=27.5mm, top=25mm, bottom=25mm}


\numberwithin{equation}{section}
\numberwithin{figure}{section}

\let\polishlcross=\l
\def\l{\ifmmode\ell\else\polishlcross\fi}

\def\paragraph#1{%
  \noindent\textbf{#1.}\enspace}

\makeatletter
\def\moverlay{\mathpalette\mov@rlay}
\def\mov@rlay#1#2{\leavevmode\vtop{   \baselineskip\z@skip \lineskiplimit-\maxdimen
   \ialign{\hfil$\m@th#1##$\hfil\cr#2\crcr}}}
\newcommand{\charfusion}[3][\mathord]{
    #1{\ifx#1\mathop\vphantom{#2}\fi
        \mathpalette\mov@rlay{#2\cr#3}
      }
    \ifx#1\mathop\expandafter\displaylimits\fi}
\makeatother

\usepackage{thmtools, thm-restate}

\theoremstyle{plain}
\newtheorem{thm}{Theorem}[section]
    \crefname{thm}{Theorem}{Theorems}
\newtheorem{theorem}[thm]{Theorem}
    \crefname{theorem}{Theorem}{Theorems}
\newtheorem{lemma}[thm]{Lemma}
    \crefname{lemma}{Lemma}{Lemmas}
\newtheorem{corollary}[thm]{Corollary}
    \crefname{corollary}{Corollary}{Corollaries}
\newtheorem{proposition}[thm]{Proposition}
    \crefname{proposition}{Proposition}{Propositions}

    \crefname{problem}{Problem}{Problems}

    \crefname{conjecture}{Conjecture}{Conjectures}
\newtheorem{observation}[thm]{Observation}
    \crefname{observation}{Observation}{Observations}
\newtheorem{question}[thm]{Question}
    \crefname{question}{Question}{Questions}
\newtheorem*{claim*}{Claim}
\newtheorem{claim}{Claim}[]
    \crefname{claim}{Claim}{Claims}
\newtheorem*{case*}{Case}

    \crefname{case}{Case}{Case}
\newtheorem{thm-intro}{Theorem}[]
    \crefname{thm-intro}{Theorem}{Theorems}
\newtheorem{conj-intro}[thm-intro]{Conjecture}
    \crefname{conj-intro}{Conjecture}{Conjectures}
\newtheorem{question-intro}[thm-intro]{Question}
    \crefname{question-intro}{Question}{Questions}

\theoremstyle{definition}
\newtheorem{definition}[thm]{Definition}
    \crefname{definition}{Definition}{Definitions}

    \crefname{remark}{Remark}{Remarks}

    \crefname{remarks}{Remarks}{Remarks}

    \crefname{situation}{Situation}{Situations}

    \crefname{construction}{Construction}{Constructions}

    \crefname{construction}{Example}{Examples}
\newtheorem*{example*}{Example}

\newenvironment{proofofclaim}[1][Proof.]{%
    \begin{proof}[{#1}]%
        }{%
    \end{proof}}

\DeclareFontFamily{U}  {MnSymbolC}{}
\DeclareSymbolFont{MnSyC}         {U}  {MnSymbolC}{m}{n}
\DeclareFontShape{U}{MnSymbolC}{m}{n}{
    <-6>  MnSymbolC5
   <6-7>  MnSymbolC6
   <7-8>  MnSymbolC7
   <8-9>  MnSymbolC8
   <9-10> MnSymbolC9
  <10-12> MnSymbolC10
  <12->   MnSymbolC12}{}
\DeclareMathSymbol{\powerset}{\mathord}{MnSyC}{180}

\let\setminus=\smallsetminus

\newcommand*{\abs}[1]{\ensuremath{{\lvert {#1} \rvert}}}

\newcommand*{\restricted}{\ensuremath{{\upharpoonright}}}

\addbibresource{odd1.bib}

\hypersetup{
    pdftitle={Odd-Minors I},
    pdfauthor={J. Pascal Gollin and Sebastian Wiederrecht}
}

\newcommand*{\minor}{\leq_{\mathsf{minor}}}
\newcommand*{\odd}{\leq_{\mathsf{odd}}}

\begin{document}

\author[J.~P.~Gollin]{J.~Pascal Gollin$^1$}
\author[S.~Wiederrecht]{Sebastian Wiederrecht$^2$}
\address{$^1$FAMNIT, University of Primorska, Koper, Slovenia}
\address{$^2$Discrete Mathematics Group, Institute for Basic Science (IBS), Daejeon, South Korea}
\email{\tt pascal.gollin@famnit.upr.si}
\email{\tt sebastian.wiederrecht@gmail.com}

\thanks{The first author was supported by the Institute for Basic Science (IBS-R029-Y3) and in part by the Slovenian Research and Innovation Agency (research project N1-0370).
The second author was supported by the Institute for Basic Science (IBS-R029-C1).}

\newcommand{\oddminor}{odd-minor}
\newcommand{\Oddminor}{Odd-minor}
\newcommand{\OddMinor}{Odd-Minor}
\newcommand{\oddminors}{odd-minors}
\newcommand{\Oddminors}{Odd-minors}
\newcommand{\OddMinors}{Odd-Minors}

\title[Graphs excluding grids with small parity breaks as odd-minors]{Structure and algorithms for graphs excluding grids with small parity breaks as odd-minors}

\date{\today}

\keywords{odd-minors, parameterized complexity, graph parameter, maximum cut, maximum independent set}

\subjclass[2020]{05C83, 05C85, 05C75, 68R10, 68Q27}

\begin{abstract}
    We investigate a structural generalisation of treewidth we call \textit{$\mathcal{A}$-blind-treewidth} where~$\mathcal{A}$ denotes an \textit{annotated graph class}. 
    This width parameter is defined by evaluating only the size of those bags~$B$ of tree-decompositions for a graph~$G$ where~${(G,B) \notin \mathcal{A}}$.
    For the two cases where~$\mathcal{A}$ is (i) the class~$\mathcal{B}$ of all pairs~${(G,X)}$ such that no odd cycle in~$G$ contains more than one vertex of~${X \subseteq V(G)}$ and (ii) the class~$\mathcal{B}$ together with the class~$\mathcal{P}$ of all pairs~${(G,X)}$ such that the ``torso'' of~$X$ in~$G$ is planar. 
    For both classes, $\mathcal{B}$ and~${\mathcal{B} \cup \mathcal{P}}$, we obtain analogues of the Grid Theorem by Robertson and Seymour and FPT-algorithms that either compute decompositions of small width or correctly determine that the width of a given graph is large. 
    Moreover, we present FPT-algorithms for \textsc{Maximum Independent Set} on graphs of bounded $\mathcal{B}$-blind-treewidth and \textsc{Maximum Cut} on graphs of bounded ${(\mathcal{B}\cup\mathcal{P})}$-blind-treewidth. 
\end{abstract}

\maketitle

\section{Introduction}
\label{sec:intro}

In 1981 Gr\"otschel and Pulleyblank~\cite{GrotschelPulleyblank1981} showed that the class of so called \emph{weakly bipartite graphs} admits structural properties that allow for a polynomial time algorithm to solve the \textsc{Maximum Cut} (MC) problem. 
Later, Guenin~\cite{Guenin2001} resolved a conjecture of Seymour by proving that weakly bipartite graphs are precisely those that exclude~$K_5$ as an \textit{\oddminor}\footnote{This result is stated for the minor relation of signed graphs which is an equivalent formulation of the \oddminor\ relation as discussed in this paper.}. 
Roughly speaking, the \oddminor\ relation is a restriction of the minor relation of graphs that preserves parities of cycles. 
This means, in particular, that both planar graphs and bipartite graphs are closed under \oddminors\footnote{We postpone the formal definition of \oddminors\ to \Cref{sec:bipartite}.}. 

The particular properties of \oddminors, specifically their ``blindness'' towards graphs that are bipartite up to the deletion of a bounded number of vertices made them the subject of different directions of research. 
Notable here is the so called \textit{Odd Hadwiger's Conjecture} which asks whether every graph with chromatic number~$t$ contains~$K_t$ as an \oddminor~\cite{GeelenGRSV2009}. 

Demaine, Hajiaghayi, and Kawarabayashi~\cite{DemaineHK2010} approached the study of \oddminors\ with the tools of the Graph Minors Theory of Robertson and Seymour and showed that for every graph~$H$, every graph~$G$ that excludes~$H$ as an \oddminor\ can be obtained via clique sums from $H$-minor-free graphs and graphs which are bipartite up to the deletion of a small set of vertices. 
This theorem allowed Demaine et al.\@ to design a generic approximation algorithm for a wide collection of problems on graphs that exclude some fixed~$H$ as an \oddminor. 
Among these is, in particular, a PTAS for the \textsc{Maximum Independent Set} (MIS) problem.
The algorithms of Demaine et al.\@ were later improved to FPT-algorithms by Tazari~\cite{Tazari2012}. 

While the structural result of Demaine et al.\@ yields a powerful tool for the design of approximation algorithms on graphs excluding an \oddminor, exact algorithmic results on such graphs have been rather elusive. 
Moreover, a structural description of graphs excluding~$K_5$ as an \oddminor\ in the style of Wagner and Robertson and Seymour is yet to be discovered. 
Motivated by this gap we start the investigation of a more refined structural theory of graphs excluding an \oddminor. 
In this paper we make a first step by determining, asymptotically, the graphs whose exclusion as \oddminors\ yields families of graphs which are tree-decomposable into ``trivial'' pieces for an appropriate choice for the notion of ``trivial''. 
We show that this structural description yields FPT-approximation algorithms for the related width-parameters and apply the resulting decompositions for the design of FPT-algorithms for the MIS and MC problem. 

\subsection*{%
\texorpdfstring{$\mathcal{A}$-blind treewidth}%
{A-blind treewidth}}

To properly describe the ``trivial'' pieces as substructures within a graph we make use of annotated graphs as follows.
An \emph{annotated graph} is a tuple~${(G,X)}$ where~$G$ is a graph and~${X \subseteq V(G)}$. 
An \emph{annotated graph class} is a class~$\mathcal{A}$ of annotated graphs.

This notation allows us the describe structural properties of single bags of a given tree-decomposition with respect to the surrounding graph.

A \emph{tree-decomposition} of a graph~$G$ is a tuple~$(T,\beta)$ of a tree~$T$ and a map~$\beta$ that assigns to the nodes of~$T$ subsets of~${V(G)}$, called the \emph{bags}, such that~${\bigcup_{t \in V(T)} \beta (t) = V(G)}$, for every~${e \in E(G)}$ there is a~${t \in V(T)}$ with~${e \subseteq \beta(t)}$, and for every~${v \in V(G)}$, the set ${\{t \in V(T) \mid v \in \beta(t) \}}$ is connected. 
The \emph{width} of~${(T,\beta)}$ is the value~${\max_{t \in V(T)} \abs{\beta(t)} - 1}$. 
The \emph{treewidth} of~$G$, denoted by~${\mathsf{tw}(G)}$, is the minimum width over all tree-decompositions of~$G$.

For an annotated graph class~$\mathcal{A}$ we extend the notion of treewidth as follows. 
The \emph{$\mathcal{A}$-blind-width} of a tree-decomposition $(T,\beta)$ for a graph~$G$ is the largest size of a bag~$\beta(t)$, such that~${(G,\beta(t)) \notin \mathcal{A}}$, or~$0$ if ${(G,\beta(t)) \in \mathcal{A}}$ for every~${t \in V(T)}$. 
The \emph{$\mathcal{A}$-blind-treewidth} of a graph~$G$, denoted by ${\mathcal{A}\text{-}\mathsf{blind}\text{-}\mathsf{tw}(G)}$, is the smallest integer~$k$ such that~$G$ has a tree-decomposition of~$\mathcal{A}$-blind-width~$k$.

In this paper, we are mostly concerned with the following two natural classes. 
By~$\mathcal{B}$ we denote the class of annotated graph~${(G,X)}$ where any odd cycle in~$G$ contains at most one vertex of~$X$. 
If~${(G,X) \in \mathcal{B}}$ we say that~$X$ \emph{globally bipartite}.
Note that if~$X$ is a globally bipartite set in~$G$ where~${G[X]}$ is $2$-connected, then for each odd cycle~$C$ of~$G$ there is a vertex whose deletion separates~$X$ from~$C$. 
This separation property (manifested in Observation~\ref{obs:blind-tw-blocks}) is what facilitates dynamic programming on tree-decompositions of small $\mathcal{B}$-blind-width. 

The \emph{torso} of a set~${X \subseteq V(G)}$ in a graph~$G$ is the graph~$G_X$ obtained from~${G[X]}$ by turning every set~${S \subseteq X}$ such that there exists a component~$H$ of~${G-X}$ with~${N_G(V(H)) = S}$ into a clique. 
By~$\mathcal{P}$ we denote the class of annotated graph $(G,X)$ such that the torso of $X$ in $G$ is a planar.

While it has never been explicitly stated as a graph parameter, the seminal result of Robertson and Seymour on \textit{single-crossing-minor-free graphs}\footnote{A graph is a \emph{single-crossing-minor} if it is a minor of some graph that can be drawn in the plane with a single crossing.} \cite{RobertsonS1993} can be seen as the first for $\mathcal{A}$-blind treewidth as they provide an asymptotic description of $\mathcal{P}$-blind-treewidth which we discuss later.

\subsection*{Our results}
Our algorithmic main results are making tree-decomposition of small $\mathcal{B}$-blind- and $(\mathcal{B}\cup\mathcal{P})$-blind-width accessible and apply them to solve the two problems MIS and MC, respectively. 

\begin{theorem}
    \label{thm:maxindependentset}
    There exists a computable function~$f$ and an algorithm that, given a positive integer~$k$ and a graph~$G$ as inputs, in time~${\mathcal{O}(f(k) \cdot \abs{V(G)}^3)}$, either finds a maximum independent set of~$G$, 
    or concludes correctly that~${\mathcal{B}\text{-}\mathsf{blind}\text{-}\mathsf{tw}(G) > k}$. 
\end{theorem}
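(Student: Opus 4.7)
The plan is to establish the theorem in two phases: first, compute a tree-decomposition of bounded $\mathcal{B}$-blind-width (or certify that no decomposition of width at most~$k$ exists); and second, run a dynamic programming algorithm on that decomposition to find a maximum independent set.

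For the first phase, I would invoke the FPT-approximation algorithm for $\mathcal{B}$-blind-treewidth that the abstract advertises as one of the main results of the paper: given~$k$, it returns, in time $f_1(k)\cdot \abs{V(G)}^{\mathcal{O}(1)}$, either a tree-decomposition $(T,\beta)$ of~$G$ whose $\mathcal{B}$-blind-width is bounded by some computable function~$g(k)$, or a correct verdict that $\mathcal{B}\text{-}\mathsf{blind}\text{-}\mathsf{tw}(G)>k$. In the latter case we are done, so we may assume a decomposition is in hand.

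For the second phase, root $(T,\beta)$ and perform a leaves-to-root dynamic program, tabulating for each node~$t$ with bag $B=\beta(t)$ and parent adhesion~$S$ the maximum weight of an independent set in the subgraph~$G_t$ induced below~$t$ that is consistent with each configuration on~$S$. The analysis splits by bag type. When ${(G,B)\notin\mathcal{B}}$ the blind-width guarantee forces $\abs{B}\le g(k)+1$, and the standard treewidth DP enumerating all $2^{\abs{B}}$ subsets of~$B$ applies. When ${(G,B)\in\mathcal{B}}$ the bag may be arbitrarily large, but $G[B]$ is bipartite, and (after passing to $2$-connected pieces via a block decomposition of $G[B]$) the separation property of globally bipartite sets noted in the excerpt ensures that every odd cycle of~$G$ is separated from~$B$ by a single vertex. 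This localises the interaction between~$B$ and the rest of~$G_t$ to a small set of ``interface'' vertices; on every remaining vertex of~$B$ the local subproblem collapses to a weighted maximum independent set problem on a bipartite auxiliary graph, which is solvable in $\mathcal{O}(\abs{V(G)}^{3})$ time via a min-cut/max-flow computation using K\H{o}nig's theorem.

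The main technical obstacle is this second case. The adhesion between a large globally-bipartite bag and a neighbour in~$T$ can itself be large, so naively enumerating all boundary configurations is not affordable. The crux of the combinatorial argument is to leverage the globally-bipartite structure together with the block decomposition of $G[B]$ to show that only a function of~$k$ many vertices of~$B$ genuinely carry information across the adhesion, while all remaining vertices are absorbed by the bipartite MWIS subroutine whose node weights come from the precomputed children's DP tables. Once this reduction is in place, each of the $\mathcal{O}(\abs{V(G)})$ bags is processed in time $2^{\mathcal{O}(k)}\cdot \abs{V(G)}^{\mathcal{O}(1)}$, dominated by the max-flow step, giving the promised $\mathcal{O}(f(k)\cdot \abs{V(G)}^3)$ runtime.
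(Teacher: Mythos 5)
Your proposal diverges from the paper's argument in a way that leaves a genuine gap. You plan to compute a tree-decomposition of bounded $\mathcal{B}$-blind-width directly (via \Cref{thm:computedecompositions}) and then run a dynamic program over it, and you yourself identify the main obstacle: adhesion sets bordering globally-bipartite bags can be arbitrarily large, so boundary configurations cannot be enumerated. Your resolution --- that ``only a function of $k$ many vertices of $B$ genuinely carry information across the adhesion'' --- is asserted without proof, and it is not clear that it is even true. Bounding the $\mathcal{B}$-blind-width of a decomposition imposes \emph{no} constraint on adhesion sets attached to large bipartite bags: two adjacent nodes can both carry large globally-bipartite bags with a large intersection, and nothing in your argument controls how much independence information must be threaded across that separator. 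The ``separation property'' of globally bipartite sets that you cite concerns odd cycles versus $2$-connected pieces of $G[X]$; it does not say anything about the size of the interface between two adjacent bags of a tree-decomposition, which is what your DP state must range over.

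The paper sidesteps this entirely by not running the DP on an abstract $\mathcal{B}$-blind decomposition at all. It instead uses the block-cut tree of $G$ (\Cref{prop:cutvertextree}), which has adhesion one by construction, together with \Cref{obs:blind-tw-blocks} (the $\mathcal{B}$-blind-treewidth of $G$ equals the maximum over its blocks). Combined with \Cref{lem:find-spb} and \Cref{cor:bpb-f(B-tw)}, this yields the key structural fact that if $\mathcal{B}\text{-}\mathsf{blind}\text{-}\mathsf{tw}(G) \le k$ then every \emph{non-bipartite} block of $G$ has treewidth bounded by a computable function of $k$. The dynamic program then stores only two numbers $\mathbf{in}_t$ and $\mathbf{out}_t$ per block node (encoding whether the unique parent cut-vertex is used), builds a weighted auxiliary graph $G_t^+$ with pendant vertices that inject the children's values, and solves each block either by standard treewidth DP (non-bipartite, bounded treewidth) or by bipartite weighted MWIS via max-flow (bipartite, arbitrary size). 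If you replace your generic $\mathcal{B}$-blind decomposition with the block-cut tree and invoke the blockwise structural implication, the adhesion problem you flagged disappears and the rest of your outline goes through.
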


\begin{theorem}
    \label{thm:maxcut}
    There exists a computable function~$f$ and an algorithm that, given a positive integer~$k$ and a graph~$G$ as inputs, in time~${\mathcal{O}(f(k) \cdot \abs{V(G)}^{\nicefrac{9}{2}})}$, either finds a maximum cut of~$G$, 
    or concludes correctly that~${(\mathcal{B}\cup\mathcal{P})\text{-}\mathsf{blind}\text{-}\mathsf{tw}(G) > k}$. 
\end{theorem}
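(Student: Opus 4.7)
The plan is to invoke, as a black box, the FPT approximation algorithm for $(\mathcal{B}\cup\mathcal{P})$-blind-treewidth promised earlier in the paper: given $G$ and $k$, it either returns a tree-decomposition $(T,\beta)$ of $G$ whose $(\mathcal{B}\cup\mathcal{P})$-blind-width and whose adhesions are both bounded by some computable $g(k)$, or it correctly certifies that $(\mathcal{B}\cup\mathcal{P})\text{-}\mathsf{blind}\text{-}\mathsf{tw}(G) > k$, in which case we output the verdict. In the favourable case we solve \textsc{Maximum Cut} by dynamic programming on $(T,\beta)$.

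Root $T$ arbitrarily, and for each node $t$ write $G_t$ for the subgraph of $G$ induced by the vertices appearing in $\beta(t)$ or in any descendant bag. At each $t$ we maintain a table indexed by the partitions of the adhesion $\beta(t)\cap\beta(p(t))$ to the parent; the entry at a partition $\pi$ stores the maximum cut size of $G_t$ over those $2$-colourings that extend $\pi$. Since adhesions have size at most $g(k)$, each table has at most $2^{g(k)}$ entries. Updating the table at $t$ from the children's tables reduces to the following \emph{local bag problem}: given $\beta(t)$, a partition of each adhesion incident to $t$, and the precomputed child tables, find a partition of $\beta(t)$ extending the fixed adhesion partitions that maximises the sum of cut edges inside $\beta(t)$ and the child contributions.

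We solve the local bag problem according to why $(G,\beta(t))\in\mathcal{B}\cup\mathcal{P}$, or because $|\beta(t)|$ is already small. For bags of size at most $g(k)$ we brute-force the $2^{g(k)}$ partitions of $\beta(t)$. For $(G,\beta(t))\in\mathcal{P}$ the torso of $\beta(t)$ is planar and each adhesion forms a clique in it; after fixing the boundary partitions the local bag problem reduces to weighted \textsc{Maximum Cut} on a planar graph, solvable in $\mathcal{O}(n^{3/2}\log n)$ time via the Hadlock / T-join algorithm. For $(G,\beta(t))\in\mathcal{B}$ the set $\beta(t)$ is globally bipartite, so $G[\beta(t)]$ is bipartite and every odd cycle of $G$ meets $\beta(t)$ in at most one vertex; we exploit this by reducing the local bag problem to \textsc{Maximum Cut} on a graph that excludes $K_5$ as an odd-minor, and then invoke the Gr\"otschel--Pulleyblank polynomial-time algorithm for weakly bipartite graphs via Guenin's characterisation.

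The main obstacle will be the $\mathcal{B}$ case: the precommitted partitions on the adhesions need not respect the bipartition of $G[\beta(t)]$, and weighted contributions from subtrees must be folded in while preserving the weakly bipartite structure of the auxiliary instance. The cleanest route is to build a signed auxiliary graph in which each child subtree is replaced by a weighted gadget on the corresponding adhesion and each fixed boundary partition is encoded by edge signs, and then to argue that this auxiliary graph continues to exclude $K_5$ as an odd-minor. Summing the per-node work over all nodes of $T$ yields the claimed bound $\mathcal{O}(f(k)\cdot |V(G)|^{\nicefrac{9}{2}})$, the dominant contribution coming from the repeated planar and weakly-bipartite \textsc{Maximum Cut} subroutines.
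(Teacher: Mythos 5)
Your proposal takes a markedly different and far more elaborate route than the paper, and you correctly identify where the gap is — but the gap is real, and the paper sidesteps it entirely with a structural observation you missed. The paper's proof hinges on \cref{lem:cutInBlocks}: an edge set $F\subseteq E(G)$ is a cut of $G$ if and only if $F\cap E(B)$ is a cut of $B$ for every block $B$ of $G$ (immediate from the fact that cuts are exactly the edge sets meeting every cycle an even number of times, and every edge and every cycle lies in a unique block). Hence a maximum cut of $G$ is just the union of maximum cuts of its blocks, computed \emph{independently} — no boundary states, no table merging. Combined with \cref{thm:singleoddcrossstructure-actual}, which shows that when no singly parity-crossing grid of order $k$ is present every block of $G$ is either bipartite or excludes $\mathscr{U}_{f(k)}$ as a minor, the whole algorithm is: compute the block decomposition, output $E(B)$ on bipartite blocks (\cref{obs:bipartitecut}), and run the single-crossing-minor-free \textsc{Maximum Cut} algorithm (\cref{prop:maxcutsinglecrossing}) on the rest. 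Note also that the decomposition the paper produces is not an arbitrary $(\mathcal{B}\cup\mathcal{P})$-blind decomposition with adhesion up to $g(k)$: the big bipartite bags are blocks (adhesion exactly $1$) and the big planar torsos are quasi-$4$-components (adhesion at most $3$), and that extra structure is what makes the clean factorisation work.

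In your DP, the $\mathcal{B}$ case does not go through as written. When $(G,\beta(t))\in\mathcal{B}$ the graph $G[\beta(t)]$ is bipartite, but the local bag problem with precommitted adhesion partitions and folded-in child costs is \emph{not} \textsc{Maximum Cut} on a bipartite graph (which would be trivial); and the claim that the auxiliary graph — the bipartite bag together with adhesion-constraint gadgets and child-cost gadgets — still excludes $K_5$ as an odd-minor is asserted only as a plan, not argued, and is likely to fail for gadgets encoding arbitrary cost functions over partitions of an adhesion set of size $g(k)$. A milder version of the same problem appears in the $\mathcal{P}$ case: a weighted triangle on a size-$3$ adhesion cannot encode an arbitrary function on its four partitions, so you need extra vertices, and you would still have to verify that those can be inserted planarly. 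All of this evaporates once you use the block decomposition and observe that adjacent blocks share a single vertex, so there is nothing to precommit.
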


Notice that every tree-decomposition of small $\mathcal{B}$-blind-width is also, in particular, of small $(\mathcal{B}\cup\mathcal{P})$-blind-width. 
Thus, \Cref{thm:maxcut} finds application also for all graphs where \Cref{thm:maxindependentset} can be applied. 

Neither of the two theorems above requires an $\mathcal{A}$-blind tree-decomposition of small width to be given as part of the input, where~$\mathcal{A}$ denotes the respective class  $\mathcal{B}$ or~${\mathcal{B} \cup \mathcal{P}}$. 
This is, because of our proofs are constructive and yield, among other things, algorithms that allow to find a decomposition of small width if one exists. 

\begin{theorem}\label{thm:computedecompositions}
    For each~${\mathcal{A} \in \{ \mathcal{B}, \mathcal{B} \cup \mathcal{P} \}}$ there exist computable functions~${f_{\mathcal{A}},g_{\mathcal{A}}}$ and an algorithm that, 
    given a positive integer~$k$ and a graph~$G$ and inputs, 
    in time $\mathcal{O}(g_{\mathcal{A}}(k)\abs{V(G)}^3)$, 
    either finds a tree decomposition of $\mathcal{A}$-blind-width at most $f(k)$, 
    or concludes correctly that~${\mathcal{A}\text{-}\mathsf{blind}\text{-}\mathsf{tw}(G) > k}$. 
\end{theorem}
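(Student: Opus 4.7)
The plan is to obtain \Cref{thm:computedecompositions} as an algorithmic corollary of the constructive grid-theorem analogues for $\mathcal{B}$-blind-treewidth and $(\mathcal{B} \cup \mathcal{P})$-blind-treewidth that are established earlier in the paper. Those theorems, as advertised in the abstract and introduction, exhibit for each class $\mathcal{A}$ under consideration an explicit family of obstruction graphs $\{H^{\mathcal{A}}_{k}\}$ --- the ``grids with small parity breaks'' of the title --- with two matching properties: if $H^{\mathcal{A}}_{k}$ appears as an \oddminor{} of $G$ then $\mathcal{A}\text{-}\mathsf{blind}\text{-}\mathsf{tw}(G) > k$, and conversely, if $G$ avoids $H^{\mathcal{A}}_{h_{\mathcal{A}}(k)}$ as an \oddminor{} (for a suitable gap function $h_{\mathcal{A}}$) then $G$ admits a tree decomposition of $\mathcal{A}$-blind-width at most some $f_{\mathcal{A}}(k)$, together with a constructive procedure producing such a decomposition.

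First I would handle the easy half of the dichotomy. For a fixed pattern $H$ the question of whether $H$ occurs as an \oddminor{} of the input can be answered in time $\mathcal{O}(c(H) \cdot \abs{V(G)}^3)$ by an FPT \oddminor-testing algorithm in the spirit of Kawarabayashi. Applied to the single pattern $H^{\mathcal{A}}_{h_{\mathcal{A}}(k)}$, this settles the dichotomy within the claimed time budget: if the obstruction is found, output ``${\mathcal{A}\text{-}\mathsf{blind}\text{-}\mathsf{tw}(G) > k}$'' by the first property; otherwise pass to the constructive branch. In that branch the constructive side of the grid-theorem analogue is expected to follow a standard win-win recursion: at each stage one either locates a large well-linked region and routes the prescribed grid-with-parity-break through it (which cannot happen, the obstruction having been ruled out), or produces a balanced separator of size bounded in $k$ and recurses on the two sides, finally gluing the partial decompositions along the separator. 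The recursion halts as soon as the current piece already qualifies as a bag in $\mathcal{A}$, which for $\mathcal{B}$ amounts to verifying that every odd cycle meets the piece in at most one vertex (a $2$-colourability style consistency check), and for $\mathcal{P}$ reduces to a planarity test on the torso.

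The principal obstacle, and the place where most of the work actually lives, is making this recursion constructive and cubic in $\abs{V(G)}$. Two points are delicate. First, a candidate piece may be only ``almost'' in $\mathcal{A}$, that is in $\mathcal{A}$ after deletion of a set whose size is bounded in terms of $k$ but not as it stands; those few defect vertices must be relocated into an adjacent bag without violating the tree-decomposition axioms, and carrying this out requires that the structural lemmas underlying the grid-theorem analogues be cast in algorithmic, witness-producing form rather than as pure existence statements. Second, the separator-finding and routing subroutines must be amortised across the recursion tree so that the overall cost is $\mathcal{O}(g_{\mathcal{A}}(k) \cdot \abs{V(G)}^3)$; a tangle-guided balanced-separator computation in the spirit of modern constructive treewidth approximations should suffice, provided each recursive call does only linear work once the corresponding tangle has been located, so that the dominant cost stems from the single cubic-time call to the \oddminor-testing subroutine at the top level.
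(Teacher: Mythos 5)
Your proposal takes a genuinely different route from the paper, and the constructive branch --- which you yourself flag as the principal obstacle --- has a gap that does not close as written.

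The annotated class $\mathcal{B}$ is a property of a set relative to the \emph{ambient} graph: ${(G,X)\in\mathcal{B}}$ requires that no odd cycle of $G$ meets $X$ in more than one vertex, which is much stronger than ${G[X]}$ being bipartite. A win-win recursion that splits along balanced separators of size roughly $g(k)$ produces pieces whose adhesion is up to $g(k)$; even when such a piece induces a bipartite subgraph, odd cycles of $G$ can reenter through the separator and meet the piece in arbitrarily many vertices, so the candidate bag fails membership in $\mathcal{B}$. Relocating a bounded number of ``defect vertices'' to a neighbouring bag does not repair this, because the set of vertices hit by those reentering odd cycles need not be bounded in $k$. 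The paper sidesteps this entirely by first taking the block-cut-tree (\Cref{prop:cutvertextree}), whose adhesion $1$ forces every odd cycle of $G$ into a single block; \Cref{obs:blind-tw-blocks} then reduces $\mathcal{A}$-blind-treewidth to per-block structure, and for $\mathcal{B}\cup\mathcal{P}$ the adhesion-$3$ decomposition of \Cref{prop:4components} plays the analogous role. With that reduction in hand the algorithm is elementary: on each non-bipartite block run Bodlaender's algorithm (\Cref{prop:treewidth}) to obtain either a small treewidth decomposition or a large wall, from which \Cref{lemma:oddear} and \Cref{lem:cleanAndOddHandle2singlyoddminor} manufacture the parity-breaking obstruction; for $\mathcal{B}\cup\mathcal{P}$ replace the treewidth step by \Cref{prop:singlecrossing} on Grohe's quasi-$4$-components. \Cref{thm:singleoddstructure-actual,thm:singleoddcrossstructure-actual} package exactly this, and \Cref{thm:computedecompositions} follows directly. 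In particular, the top-level call to an FPT \oddminor-testing subroutine in your sketch is not only a far heavier tool than anything the paper invokes; it is also unnecessary, since the decomposition attempt itself certifies the lower bound when it fails.
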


As announced in the beginning, we determine, asymptotically, all odd-minor-closed graph classes where $\mathcal{A}$-blind-treewidth is bounded for $\mathcal{A}\in\{\mathcal{B},\mathcal{B}\cup\mathcal{P} \}$.
We achieve this by proving odd-minor analogues of the celebrated Grid Theorem of Robertson and Seymour \cite{RobertsonS1986:GraphMinorsV} for both parameters.
Before we proceed with our structural results and how those are achieved, let us quickly give some context to the position our variation of treewidth takes within the landscape of parameterized complexity and width parameters.

\subsection*{Hybridisation in parameterized complexity}
The technique of relaxing an established parameter such as treewidth by, essentially, ignoring parts that belong to a fixed graph class~$\mathcal{C}$ as long as these parts interact well with the rest of the graph has become known as \textit{hybridisation} in parameterized complexity. 
One may regard results such as Kaminski's algorithm for \textsc{Maximum Cut} on single-crossing-minor-free graph classes \cite{Kaminski2012} as an example. 
A broad generalisation of this approach in minor-closed classes is the recent dichotomy result for counting perfect matchings in minor-closed graph classes by Thilikos and Wiederrecht~\cite{ThilikosW2022}. 

Another recent article by Eiben, Ganian, Hamm, and Kwon~\cite{EibenGHK2021} introduced the notion of \textit{$\mathcal{H}$-treewidth} where $\mathcal{H}$ is a (hereditary) graph class. 
Here, the authors ask for a tree-decomposition in which the bags of all internal nodes are small, while the leaf bags, after deleting their adhesion set\footnote{The adhesion set of a bag $B$ in a tree-decomposition is the union of the intersections of $B$ with the neighbouring bags.}, must belong to~$\mathcal{H}$ if they are large. 
This concept was further studied in~\cite{JansenKW2021}.
Note that $\mathcal{H}$-treewidth is a measure of the complexity of a modulator whose deletion results in all components being in~$\mathcal{H}$.
In contrast, for $\mathcal{A}$-blind-treewidth, also internal nodes are allowed to be of arbitrary size as long as they are in~$\mathcal{A}$.

In the above mentioned result on counting perfect matchings, the graph classes under consideration were graphs which are of bounded Euler-genus after the deletion of a small apex set. 
This \textit{modulation} towards a fixed graph class is also the main engine behind the ideas of Eiben et al.\@ and Jansen et al. 
A different approach could be taken by changing the evaluation of the bags of a tree-decomposition completely. 
This is done, for example, in the definition of \textit{$\alpha$-treewidth} where, instead of the size of a bag, one evaluates the size of a maximum independent set of its induced subgraph. 
This notion was introduced as another way to parameterize the \textsc{Maximum Independent Set} problem by means of tree-decompositions~\cite{Yolov2018,DallardMS2021}. 

\subsection*{The parametric perspective and our structural results}
Tree-decompositions of graphs allow for the almost automated design of dynamic programming algorithms and hence yield efficient algorithms when parameterised by the treewidth of the graph~\cite{Courcelle1990,Bodlaender2007}. 

The Grid Theorem of Robertson and Seymour~\cite{RobertsonS1986:GraphMinorsV} asserts the existence of a family~${ \mathscr{G} = \langle \mathscr{G}_i \rangle_{i \in \mathbb{Z}^{+}}}$ where~$\mathscr{G}_k$ is the ${(k \times k)}$-grid, such that every graph with treewidth at least~${f(k)}$ contains~$\mathscr{G}_k$ as a minor. 
Moreover, the containment of~$\mathscr{G}_k$ as a minor certifies that the treewidth of a graph is at least~$k$. 
Let us denote by~${\mathsf{big}_\mathscr{G}(G)}$ the largest integer~$k$ such that~$G$ contains~$\mathscr{G}_k$ as a minor. 

We say that two graph parameters~$\mathsf{p}$ and~$\mathsf{q}$ are \emph{asymptotically equivalent}, and write~${\mathsf{p} \sim \mathsf{q}}$, if there is a function~$f$ such that~${\mathsf{p}(G) \leq f(\mathsf{q}(G))}$ and~${\mathsf{q}(G) \leq f(\mathsf{p}(G))}$ for all graphs~$G$. 

In this notation, the Grid Theorem of Robertson and Seymour reduces to
\begin{align*}
    \mathsf{big}_\mathscr{G}(G) \sim  \mathsf{tw}.
\end{align*}
This means that, asymptotically, the treewidth of a graph, and thus the algorithmic power of treewidth, is obstructed by a single family of graphs; the grids~$\mathscr{G}$.

\begin{figure}[ht]
    \begin{center}
        \includegraphics{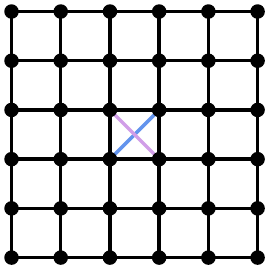}
    \end{center}
    \caption{The single-crossing grid~$\mathscr{U}_3$ of order~$3$. The single-crossing grid~$\mathscr{U}_k$ is obtained from the $(2k \times 2k)$-grid by adding a crossing to the centremost face. }
    \label{figure:singlecrossinggrid}
\end{figure}

As a first extension of the Grid Theorem and a fragment of the general Graph Minors Structure Theorem~\cite{RobertsonS2003:GraphMinorsXVI,KawarabayashiTW2020}, Robertson and Seymour~\cite{RobertsonS1993} described the minor-closed graph classes that allow for tree-decompositions into small pieces and pieces that are planar. 
They identified another family~${\mathscr{U} = \langle \mathscr{U}_i \rangle_{i \in \mathbb{Z}^{+}}}$, see \Cref{figure:singlecrossinggrid} for an illustration, such that excluding some member of~$\mathscr{U}$ allows for a decomposition of small $\mathcal{P}$-blind-width while the $\mathcal{P}$-blind-width of~$\mathscr{U}_k$ is at least~${2k+1}$ (see \Cref{lem:P-tw-single-crossing-grid}). 
Similar to the relation of treewidth with $\mathsf{big}_\mathscr{G}(G)$, this yields an asymptotic equivalence between $\mathcal{P}\text{-}\mathsf{blind}\text{-}\mathsf{tw}$ and the maximisation parameter defined through the minor-containment of a member of~$\mathscr{U}$. 
Similar to graphs of bounded treewidth, the structure of these so called \textit{single-crossing-minor-free graphs} allows for powerful algorithmic applications~\cite{Kaminski2012,DemaineHNRT2004}.

For $\mathcal{P}$-blind-treewidth, the minor-relation is the natural containment relation as classes of bounded treewidth and the class of planar graphs are closed under minors. 
However, once we introduce bipartite graphs as one of our target classes we need to be more careful. 
For $\mathcal{B}$-blind-treewidth and ${(\mathcal{B}\cup \mathcal{P})}$-blind-treewidth, \oddminors\ yield the appropriate containment relation, as preserving the parity of cycles ensures that the class of bipartite graphs is closed under \oddminors. 

\begin{figure}[ht]
    \begin{center}
        \includegraphics[scale=1]{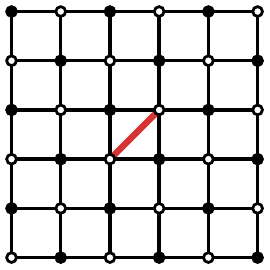}
    \end{center}
    \caption{The singly parity-breaking~$\mathscr{S}_3$. The red edge is the sole edge joining two vertices from the same colour class of a proper $2$-colouring of the underlying grid, hence, every odd cycle has to contain that edge. }
    \label{figure:singleoddintro}
\end{figure}

The \emph{singly parity-breaking grid}~$\mathscr{S}_k$ of \emph{order~$k$} is the graph obtained from the ${(2k\times 2k)}$-grid by adding 
a single parity-breaking edge in the centremost face. 
We denote the family of singly parity-breaking grids by~${\mathscr{S} \coloneqq \langle \mathscr{S}_i \rangle_{i \in \mathbb{Z}^{+}}}$. 
See \Cref{figure:singleoddintro} for an illustration. 
Moreover, we define the parameter~${\mathsf{big}_\mathscr{S}(G)}$ of a graph~$G$ to be the largest integer~$k$ such that~$G$ contains~$\mathscr{S}_k$ as an \oddminor. 

This allows us to state our first structural result as a asymptotic characterisation of those graphs that can be decomposed into bipartite graphs and graphs of small treewidth. 

\begin{theorem}
    \label{thm:singleoddstructure}
    It holds that ${\mathcal{B}\text{-}\mathsf{blind}\text{-}\mathsf{tw} \sim \mathsf{big}_{\mathscr{S}}}$. 
    In particular, there exist a computable function ${f \colon \mathbb{N} \to \mathbb{N}}$
    such that for every positive integer~$k$, every graph~$G$ either contains contains the singly parity-breaking grid of order~$k$ as an \oddminor, or ${\mathcal{B}\text{-}\mathsf{blind}\text{-}\mathsf{tw}(G) \leq f(k)}$.  
\end{theorem}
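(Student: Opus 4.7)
The asymptotic equivalence ${\mathcal{B}\text{-}\mathsf{blind}\text{-}\mathsf{tw} \sim \mathsf{big}_{\mathscr{S}}}$ splits into two bounds, and the ``In particular'' statement is precisely the substantive one. I first address the easy direction, $\mathsf{big}_{\mathscr{S}}(G) \leq f_{1}(\mathcal{B}\text{-}\mathsf{blind}\text{-}\mathsf{tw}(G))$. Two lemmas suffice: (a) monotonicity of $\mathcal{B}\text{-}\mathsf{blind}\text{-}\mathsf{tw}$ under the odd-minor relation, and (b) a lower bound $\mathcal{B}\text{-}\mathsf{blind}\text{-}\mathsf{tw}(\mathscr{S}_{k}) = \Omega(k)$. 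For (a), given a tree-decomposition $(T,\beta)$ of $G$ and an odd-minor model of $H$ in $G$ (where each branch set carries a consistent $2$-colouring certifying the model), one pulls back to a tree-decomposition of $H$ by intersecting bags with branch sets; the key check is that the pullback of a globally bipartite bag stays globally bipartite, which follows because any odd cycle of $H$ lifts to an odd closed walk in $G$ that respects the branch-set colourings. For (b), every vertex of $\mathscr{S}_{k}$ lies on an odd cycle through the unique parity-breaking edge (since the two endpoints $u,v$ share a colour class of the underlying grid, and the grid is highly connected), so any globally bipartite set $X$ in $\mathscr{S}_{k}$ meets each odd cycle in at most one vertex -- a short case analysis then shows that $|X|$ is bounded by a constant; combined with $\mathsf{tw}(\mathscr{S}_{k}) \geq 2k$ via the grid subgraph, this forces $\mathcal{B}\text{-}\mathsf{blind}\text{-}\mathsf{tw}(\mathscr{S}_{k}) \geq 2k - O(1)$.

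The hard direction, the ``In particular'' part, produces $\mathscr{S}_{k}$ as an odd-minor of $G$ from the assumption $\mathcal{B}\text{-}\mathsf{blind}\text{-}\mathsf{tw}(G) > f(k)$. Since trivially ${\mathcal{B}\text{-}\mathsf{blind}\text{-}\mathsf{tw}(G) \leq \mathsf{tw}(G)}$, the classical Grid Theorem furnishes a large wall $W$ in $G$ of order $r$ polynomial in~$k$. I plan a dichotomy on the ``parity profile'' of~$W$ in~$G$. If there is an odd $W$-path in $G$, or more generally a connected attachment to $W$ witnessed by an odd walk between two vertices lying in the same colour class of the bipartite wall, then a routing argument inside a sub-wall produces $\mathscr{S}_{k}$ as an odd-minor: pick a smaller subgrid of $W$ around the centre, route an even path between the two attachment vertices through this subgrid, and compose with the odd detour; after contracting branch sets along the detour, the resulting model is exactly $\mathscr{S}_{k}$. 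Otherwise, no such ``parity break'' attaches to $W$, and one argues that $V(W)$ together with the locally bipartite pieces it controls is globally bipartite in~$G$; iterating along a recursive decomposition built around $W$ -- using that the separators around $W$ split $G$ into pieces that can either be absorbed into a bipartite bag or recursed upon -- yields a tree-decomposition of $\mathcal{B}$-blind-width bounded in terms of~$k$, contradicting $\mathcal{B}\text{-}\mathsf{blind}\text{-}\mathsf{tw}(G) > f(k)$.

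The main obstacle is making the ``parity break'' dichotomy precise and iterative. Detecting an odd $W$-path attachment is local, but one must also rule out parity breaks that are only visible after passing through several separators of~$G$, or after rerouting through other bipartite pieces. The natural tool is a parity-preserving linkage lemma in the spirit of the Robertson--Seymour linkage theorem and of the odd-branch-width work pioneered by Reed, Geelen, Gerards, and Seymour: a sufficiently large wall in~$G$ either admits parity-preserving routings in abundance (yielding $\mathscr{S}_{k}$ after contraction) or is ``bipartitely separated'' from the rest of~$G$ by a small cut. Turning this into a quantitative, computable bound $f(k)$ is the most delicate part; once it is available, the algorithm of \Cref{thm:computedecompositions} falls out as the constructive implementation of the dichotomy -- find the wall via standard grid-extraction subroutines, test for a parity break by a parity-sensitive search, and either build the odd-minor model of $\mathscr{S}_{k}$ or descend into the recursive decomposition.
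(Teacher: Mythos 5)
Your overall shape is right but you are missing the key structural simplification that makes the hard direction tractable, and this is not merely a matter of ``filling in quantitative details.''

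The easy direction is essentially the paper's argument: $\odd$-monotonicity of $\mathcal{B}\text{-}\mathsf{blind}\text{-}\mathsf{tw}$ (the paper's Corollary~\ref{cor:oddmonotone}, though it is proved via closure of bipartiteness under odd-minors rather than your pullback-of-tree-decompositions route, which would need more care) together with the fact that in $\mathscr{S}_k$ every pair of vertices lies on a common odd cycle, so any globally bipartite set is a single vertex and $\mathcal{B}\text{-}\mathsf{blind}\text{-}\mathsf{tw}(\mathscr{S}_k)=\mathsf{tw}(\mathscr{S}_k)+1$.

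For the hard direction you correctly identify that one must find a large wall plus a parity break, but you then plan a recursive decomposition ``built around $W$'' and invoke a parity-preserving linkage lemma in the style of Reed and Geelen--Gerards--Seymour as the main engine. This is where the proposal breaks down: it is not clear your recursion terminates with a genuine tree-decomposition of small $\mathcal{B}$-blind-width, and the linkage machinery is far heavier than what the problem needs. The paper avoids all of this via a single observation you do not use: since odd cycles are confined to blocks and the block-cut tree has adhesion~$1$, $\mathcal{B}$-blind-treewidth equals the maximum $\mathcal{B}$-blind-treewidth over the blocks of~$G$ (Observation~\ref{obs:blind-tw-blocks}). So one simply applies \Cref{prop:cutvertextree}. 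Every bipartite block can be placed wholesale into a single bag, which is automatically globally bipartite in $G$ because any odd cycle of $G$ lives in a single (necessarily non-bipartite) block and therefore meets a bipartite block in at most one vertex. For each non-bipartite $2$-connected block of large treewidth, \Cref{lemma:oddear} guarantees \emph{immediately} that any $2$-connected bipartite subgraph (in particular a clean wall furnished by \Cref{cor:cleanwall}) admits an odd ear, with no linkage argument: one takes an odd cycle and two disjoint paths to the wall, a parity count shows one of the two resulting ears is odd. Your worry that ``parity breaks may only be visible after passing through several separators'' dissolves once you work inside a $2$-connected block. Finally, you also under-specify how the odd ear is routed to the centre of the grid; this is done in the paper by \Cref{lem:InsideOutWall}, which maps the perimeter of a wall to the central brick of a smaller subwall so that an ear attached to the perimeter becomes a parity break at the centre. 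Without the block decomposition and the ear lemma, your proposal as written is an outline of a much harder proof, not of this one.
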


\begin{figure}
    \begin{center}
        \includegraphics[scale=1]{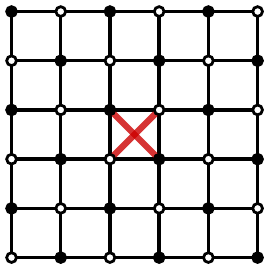}\qquad
        \includegraphics[scale=1]{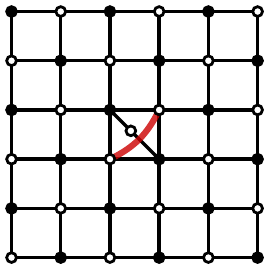}\qquad
        \includegraphics[scale=1]{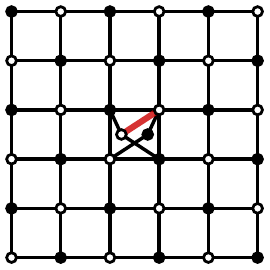}
    \end{center}
    \caption{The three singly parity-crossing grids~$\mathscr{C}^1_3$, $\mathscr{C}^2_3$, and~$\mathscr{C}^3_3$ (from left to right). 
    The red edges are those joining two vertices of the same colour class in proper $2$-colouring of the underlying grid.}
    \label{figure:singleoddcross}
\end{figure}

For our second structural result we need three different families~${\mathscr{C}^i = \langle \mathscr{C}^i_j \rangle_{j \in \mathbb{Z}^{+}}}$ for~${i \in [3]}$ which are obtained from the ${(2k\times 2k)}$-grid by adding a cross with a small parity break in the centremost, see \Cref{figure:singleoddcross} for an illustration. 
We call~${\mathfrak{C} \coloneqq\{ \mathscr{C}^i \mid i\in [3] \}}$ the family of \emph{singly parity-crossing grids} and say that a graph~$G$ contains a singly parity-crossing grid \emph{of order~$k$} as an \oddminor\ if there exists~${i \in [3]}$ such that~$\mathscr{C}^i_k$ is an \oddminor\ of~$G$. 
Moreover, we define the parameter~${\mathsf{big}_{\mathfrak{C}}(G)}$ of a graph~$G$ to be the largest integer~$k$ such that~$G$ contains~$\mathscr{C}^i_k$ as an \oddminor\ for some~${i \in [3]}$. 

This allows us to state our second structural result as a asymptotic characterisation of those graphs that can be decomposed into bipartite graphs, planar graphs, and graphs of small treewidth.

\begin{theorem}
    \label{thm:singleoddcrossstructure}
    It holds that ${(\mathcal{B} \cup \mathcal{P})\text{-}\mathsf{blind}\text{-}\mathsf{tw} \sim \mathsf{big}_{\mathscr{S}}}$. 
    In particular, there exist a computable function~${f \colon \mathbb{N} \to \mathbb{N}}$
    such that for every positive integer~$k$, every graph~$G$ either contains contains a singly parity-crossing grid of order~$k$ as an \oddminor, or ${(\mathcal{B} \cup \mathcal{P})\text{-}\mathsf{blind}\text{-}\mathsf{tw}(G) \leq f(k)}$. 
\end{theorem}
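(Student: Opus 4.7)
The plan is to establish the asymptotic equivalence by handling each direction separately, with essentially all of the work concentrated in the upper bound.

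For the \emph{lower bound} $(\mathcal{B}\cup\mathcal{P})\text{-}\mathsf{blind}\text{-}\mathsf{tw} \gtrsim \mathsf{big}_{\mathfrak{C}}$, I would first verify that $(\mathcal{B}\cup\mathcal{P})$-blind-treewidth is monotone under \oddminors: the class~$\mathcal{B}$ is preserved because parity-respecting contractions send odd cycles to odd cycles, and~$\mathcal{P}$ is preserved because any minor operation only simplifies the torso and hence preserves planarity. It then suffices to lower-bound the parameter directly on each $\mathscr{C}^i_k$. Each $\mathscr{C}^i_k$ contains an embedded $\mathscr{S}_{k-1}$, which forces any large-bag that sees the parity-break endpoints on opposite colour classes of the grid to fail~$\mathcal{B}$, while simultaneously containing an embedded single-crossing grid $\mathscr{U}_{k-1}$, which, by \Cref{lem:P-tw-single-crossing-grid}, forces the torso of any bag covering the centre to fail~$\mathcal{P}$. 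A Menger-type argument across the central grid region shows that no tree-decomposition can route both the parity-break and the crossing away from the same bag without that bag becoming large, yielding a lower bound linear in~$k$.

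For the \emph{upper bound} $\mathsf{big}_{\mathfrak{C}} \gtrsim (\mathcal{B}\cup\mathcal{P})\text{-}\mathsf{blind}\text{-}\mathsf{tw}$, the plan is to bootstrap from \Cref{thm:singleoddstructure}. Given~$G$ with $\mathsf{big}_{\mathfrak{C}}(G) < k$, invoke \Cref{thm:singleoddstructure} with a sufficiently large auxiliary parameter $K = K(k)$. If $\mathsf{big}_{\mathscr{S}}(G) \leq K$, then $G$ already admits a tree-decomposition of bounded $\mathcal{B}$-blind-width, which is \emph{a fortiori} a $(\mathcal{B}\cup\mathcal{P})$-blind-decomposition of the same width, and we are done. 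Otherwise $G$ contains~$\mathscr{S}_K$ as an \oddminor, which provides a large grid skeleton with a distinguished parity-break. The plan is then to refine the bipartite decomposition by decomposing each bag~$B$ that is not globally bipartite further, applying Robertson and Seymour's single-crossing-minor decomposition theorem~\cite{RobertsonS1993} to an auxiliary graph that records the odd-cycle connectivity pattern of~$B$ inside~$G$. A bag that leaves~$\mathcal{B}$ but whose torso is planar lies legitimately in $\mathcal{B} \cup \mathcal{P}$, so one only has to decompose further when the torso is non-planar.

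The \textbf{main obstacle} is the reduction step: one must show that whenever a bag~$B$ simultaneously violates~$\mathcal{B}$ (some odd cycle of~$G$ meets~$B$ in at least two vertices) and has non-planar torso~$G_B$ (so $G_B$ contains $K_5$ or $K_{3,3}$ as a minor), the graph~$G$ already contains some $\mathscr{C}^i_k$ as an \oddminor. The intended strategy is a parity-tracked routing argument: using large disjoint-paths linkages inside the $\mathscr{S}_K$-skeleton, route both the $K_5$ or $K_{3,3}$ model in~$G_B$ and the endpoints of the offending odd cycle into the central face of a large sub-grid, and then classify the outcome by the colour-classes, relative to the grid's proper $2$-colouring, occupied by the two ends of the resulting parity-break. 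There are exactly three such classes, and they correspond precisely to $\mathscr{C}^1$, $\mathscr{C}^2$, and $\mathscr{C}^3$, which is why the obstructing family~$\mathfrak{C}$ consists of three members rather than one. Carrying out this parity-respecting linkage — in effect a signed-graph adaptation of the linkage theorem combined with tangle control over the $\mathscr{S}_K$-skeleton — is the principal technical hurdle of the proof.
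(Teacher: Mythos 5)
Your lower bound is essentially the right idea and, once the "Menger-type" hand-wave is replaced with the direct argument (any two vertices of $\mathscr{C}^i_k$ lie on a common odd cycle, so a large $\mathcal{B}$-bag is impossible, and a large planar torso forces a $K_5$-minor as in \Cref{lem:P-tw-single-crossing-grid}), it matches \Cref{lem:BP-tw-singly-crossing-grid} and \Cref{cor:bpb-f(BP-tw)}.

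For the upper bound, however, there is a genuine structural gap. You propose to bootstrap from \Cref{thm:singleoddstructure}: if $\mathsf{big}_{\mathscr{S}}(G)\le K$, take a small-$\mathcal{B}$-blind-width decomposition; otherwise use an $\mathscr{S}_K$-expansion. But in the "otherwise" branch there is no "bipartite decomposition" to refine --- that is precisely the case in which $\mathcal{B}$-blind-treewidth is large, so the sentence "refine the bipartite decomposition by decomposing each bag $B$ that is not globally bipartite further" has no referent. Moreover, in a decomposition of small $\mathcal{B}$-blind-width the bags that \emph{fail} $\mathcal{B}$ are already \emph{small}; the large ones lie in $\mathcal{B}$, so there is nothing to decompose further. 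The deeper issue is the premise of your "main obstacle" paragraph: you assume one can isolate a single bag that \emph{simultaneously} violates $\mathcal{B}$ (some odd cycle crosses it twice) and has non-planar torso, and then aim to route both a $K_5/K_{3,3}$ model and a parity break into an $\mathscr{S}_K$-skeleton. But $\mathscr{S}_K$ is planar except for one edge, so its skeleton carries no crossing; the crossing must come from applying the single-crossing-minor structure theorem, which is not delivered by an $\mathscr{S}_K$-expansion. The $\mathcal{B}$-violations and the $\mathcal{P}$-violations need not coexist in a common bag of any particular decomposition, and nothing in your scheme produces a decomposition in which they do.

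The paper's actual route avoids this entirely: decompose $G$ into blocks via \Cref{prop:cutvertextree} (this always exists and does not pre-suppose any blind-width bound); inside each non-bipartite block, apply the single-crossing-minor structure \Cref{prop:singlecrossing}, which either yields a $\mathscr{U}_{f(k)}$-expansion or a decomposition whose large torsos are planar quasi-$4$-components. The crux is then \Cref{lem:find-spc}: a $\mathscr{U}_{f(k)}$-expansion inside a $2$-connected non-bipartite graph can always be upgraded to an odd $\mathscr{C}^i_k$-expansion for some $i\in[3]$, by first locating a clean cross-wall (\Cref{lemma:cleancrosswall}), then attaching the odd ear guaranteed by \Cref{lemma:oddear} and running a parity case analysis. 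The three outcomes $\mathscr{C}^1,\mathscr{C}^2,\mathscr{C}^3$ arise from the parities of the cross-paths and the odd ear, not, as you suggest, from colour-class combinations of parity-break endpoints in an $\mathscr{S}_K$-skeleton. In short: block decomposition plus single-crossing-minor structure inside blocks is the right order of operations, $2$-connectivity is what lets non-bipartiteness be promoted to a parity-broken cross, and nothing in the argument uses an $\mathscr{S}_K$-expansion.
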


The proof of \Cref{thm:singleoddstructure} and \Cref{thm:singleoddcrossstructure} are constructive in the sense that they provide algorithms which, for some function $g$, find in time $g(k)\abs{V(G)}^{\mathcal{O}(1)}$ an \oddminor\ from the desired family, or conclude correctly that no such \oddminor\ exists.
The specific constructions yield therefore the proof of \Cref{thm:computedecompositions}.

\subsection*{Organisation}

In \Cref{sec:prelims} we introduce most definitions and previous results necessary for our proofs. 
Here, we also discuss Robertson and Seymour's theory of single-crossing-minor-free graphs in terms of $\mathcal{P}$-blind-treewidth. 
Towards our structural results we first briefly discuss some properties of \oddminor{s} of bipartite graphs in \Cref{sec:bipartite}. 
We then extract an important lemma on finding parity-breaking ears of non-bipartite $2$-connected graphs in \Cref{sec:ears}. 
Finally, we prove \cref{thm:singleoddstructure} in \Cref{sec:singleodd} and \cref{thm:singleoddcrossstructure} in \Cref{sec:oddcrossing}. 
For the algorithmic applications of our structural results, the proof of \cref{thm:maxindependentset} can be found in \Cref{sec:maxindependentset} while the proof for \cref{thm:maxcut} is contained in \Cref{sec:maxcut}. 

\section{Preliminaries}
\label{sec:prelims}

Throughout this paper, a graph~$G$ will be finite and simple with vertex set~${V(G)}$ and edge set~${E(G)}$. 
For standard graph theoretic notions that we are not defining, we refer the reader to~\cite{Diestel:GraphTheory5}. 

We denote by~$\mathbb{N}$ and~$\mathbb{Z}^+$ the set of non-negative and positive integers, respectively
For~${m, n \in \mathbb{N}}$, we write~${[n]}$ for the set~${\{ i \in \mathbb{N} \mid 1 \leq i \leq n \}}$, and we write~${[m,n]}$ for the set~${\{ i \in \mathbb{N} \mid m \leq i \leq n \}}$. 

\subsection{\Oddminors}
Let~$G$ and~$H$ be graphs. 
An \emph{$H$-expansion in~$G$} is a map~$\eta$ from ${V(H) \cup E(H)}$ to the set of non-empty subgraphs of~$G$ such that
\begin{itemize}
    \item $\eta(v)$ is a tree in~$G$ for each~${v \in V(H)}$, and~$\eta(v)$ is vertex-disjoint from~$\eta(w)$ for distinct~${v,w \in V(H)}$; 
    \item $\eta(vw)$ is an edge of~$G$ for each~${vw \in E(H)}$ that contains a vertex from both~$\eta(v)$ and~$\eta(w)$. 
\end{itemize}
We denote by~$H_\eta$ the subgraph of~$G$ that is the union~${\bigcup_{v \in V(H)} \eta(v)}$ together with the edges in~$\eta(E(H))$
and say~$H_\eta$ is an \emph{inflated copy of~$H$}.
We call~$\eta(v)$ the \emph{branch set} of~$v$.

We say~$H$ is a \emph{minor} of~$G$, and write~${H \minor G}$, if there is an $H$-expansion in~$G$.

We call an $H$-expansion in~$G$ \emph{odd} if there is a $2$-colouring~$c$ of~$H_\eta$ such that ${c \restricted \eta(v)}$ is a proper $2$-colouring for each~${v \in V(H)}$ and~$\eta(e)$ is monochromatic for each~${e \in E(H)}$, and we say~$c$ \emph{witnesses} that~$\eta$ is odd. 

We say~$H$ is an \emph{\oddminor} of~$G$, and write~${H \odd G}$, if there is an odd $H$-expansion in~$G$. 
Clearly, $\odd$ is a quasi-order.

We say that a graph~$H'$ is a \emph{subdivision} of another graph~$H$ if~$H'$ can be obtained from~$H$ be replacing the edges of~$H$ with paths whose set of internal vertices is disjoint from the rest of the graph. 
We say that an edge is \emph{subdivided $n$-times} if it was replaced by a path of length~${n+1},$ that is, if it was replaced by a path with~$n$ internal vertices.
In case~$H$ is bipartite and~$H'$ is a subdivision of~$H$ where each edge is subdivided an even number of times we say that~$H'$ is a \emph{bipartite subdivision} of~$H$. 

\subsection{Connectivity and tree-decompositions}

Let~$G$ be a graph and~${k \geq 1}$ be an integer. 
We say that~$G$ is \emph{$k$-connected} if~${\abs{V(G)} \geq k+1}$ and~${G-S}$ is connected for every set~${S \subseteq V(G)}$ of size at most~${k-1}$. 
A vertex~${v \in V(G)}$ is called a \emph{cutvertex of~$G$} if~${G-v}$ has more components than~$G$.
We call a subgraph~$H$ of a graph~$G$ a \emph{block of~$G$} if $H$ is a maximal subgraph~$G$ that does not contain a cutvertex of~$H$. 
Hence, a block of~$G$ is either isomorphic to~$K_2$ or $2$-connected. 

A graph~$G$ is said to be \emph{quasi-$4$-connected} if it is $3$-connected and for all 
sets~${S \subseteq V(G)}$ of size~$3$, one of the components of~${G-S}$ has size at most~$1$.

\begin{definition}[Tree-decomposition]\label{def_treewidth} 
    Let~$G$ be a graph. 
    A \emph{tree-decomposition} of~$G$ is a tuple~${\mathcal{T} = (T,\beta)}$ where~$T$ is a tree and~${\beta \colon V(T) \to 2^{V(G)}}$ is a function, whose images are called the \emph{bags} of~$\mathcal{T}$, such that 
    \smallskip
    \begin{enumerate}
        \item ${\bigcup_{t \in V(T)} \beta (t) = V(G)}$,  
        \item for every~${e \in E(G)}$ there exists~${t \in V(T)}$ with~${e \subseteq \beta(t)}$, and 
        \item for every~${v \in V(G)}$ the set~${\{t \in V(T) \mid v \in \beta(t) \}}$ induces a subtree of~$T$. 
    \end{enumerate} 
    \smallskip
    We refer to the vertices of~$T$ as the \emph{nodes} of the tree-decomposition~$\mathcal{T}$. 
    
    A tuple~${\mathcal{T} = (T,r,\beta)}$ is a \emph{rooted tree-decomposition} for a graph~$G$ if~${(T,\beta)}$ is a tree-decomposition for~$G$ and~${r \in V(T)}$. 
    We call~$r$ the \emph{root} of~$\mathcal{T}$. 
\end{definition}

Let~$G$ be a graph and~${(T,\beta)}$ be a tree-decomposition of~$G$.

For each~${t \in V(T)}$, we define the \emph{adhesion sets of~$t$} as the sets in ${\{ \beta(t) \cap \beta(d) \mid d \in N_T(t) \}}$ and the maximum size of them is called the \emph{adhesion of~$t$}. 
The \emph{adhesion} of~$\mathcal{T}$ is the maximum adhesion of a node of~$\mathcal{T}$. 

The \emph{torso} of~$\mathcal{T}$ on a node~$t$ is the graph~$G_t$ obtained by adding edges between every pair of vertices of~${\beta(t)}$ which belongs to a common adhesion set of~$t$.
Notice that the torso of $\mathcal{T}$ at $t$ contains the torso of $\beta(t)$ in $G$ as a subgraph.

The \emph{width} of~${(T,\beta)}$ is the value~${\max_{t \in V(T)} \abs{\beta(t)} - 1}$. 
The \emph{treewidth} of~$G$, denoted by~${\mathsf{tw}(G)}$, is the minimum width over all tree-decompositions of~$G$. 

\begin{proposition}[\cite{Bodlaender1996}]
    \label{prop:treewidth}
    There exists a computable function~$g_{\ref{prop:treewidth}}$ and an algorithm that, given a positive integer~$k$ and a graph~$G$ as inputs, in time~${\mathcal{O}(g_{\ref{prop:treewidth}}(k) \abs{V(G)})}$, either finds a tree-decomposition of width~$k$ for~$G$, or concludes correctly that~${\mathsf{tw}(G) > k}$. 
\end{proposition}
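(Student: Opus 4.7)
The plan is to establish this result along the lines of Bodlaender's classical linear-time treewidth algorithm, by combining two components: a \emph{recursive reduction} that shrinks the input by a constant fraction and an \emph{improvement subroutine} that sharpens a crude tree-decomposition into an optimal one. Since the result is classical, the task is less about inventing new arguments and more about organising the known ingredients into a single verifiable pipeline.

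First, I would build the improvement subroutine of Bodlaender--Kloks: given a graph $G$ together with a tree-decomposition of width $\ell$, one can in time $2^{O(\ell \cdot k)} \cdot \abs{V(G)}$ either produce a tree-decomposition of width at most $k$, or correctly conclude that $\mathsf{tw}(G) > k$. The idea is a leaf-to-root dynamic programme over the given decomposition that, at each node, records the set of all ``characteristic'' partial width-$k$ decompositions of the processed subgraph that could be extended to a global width-$k$ decomposition. Only finitely many such characteristics exist once $k$ and $\ell$ are fixed, which is what keeps the dependence on $\abs{V(G)}$ linear.

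Second, I would set up the main recursion. The key structural lemma is that every graph of treewidth at most $k$ contains a linear-sized set of \emph{reducible} substructures (for instance, $I$-simplicial or low-degree vertices lying in small pieces) that can be located in linear time and independently contracted, producing a graph $G'$ with $\abs{V(G')} \leq \alpha \abs{V(G)}$ for some $\alpha < 1$ depending only on $k$. One then recurses on $G'$ to obtain a tree-decomposition of width at most $k$ (or to conclude $\mathsf{tw}(G') > k$, and hence $\mathsf{tw}(G) > k$), lifts this decomposition back through the contractions at the cost of blowing the width up to some $\ell = c(k)$, and finally invokes the improvement subroutine to either certify $\mathsf{tw}(G) > k$ or recover a width-$k$ decomposition. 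The recurrence $T(n) \leq T(\alpha n) + g(k) \cdot n$ solves to $T(n) = O(g_{\ref{prop:treewidth}}(k) \cdot n)$.

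The hard part, and the reason Bodlaender's original argument is technically demanding, is the structural lemma that guarantees a linear number of reducible substructures in every graph of treewidth at most $k$, together with the bookkeeping needed to lift a decomposition through the contractions with only a $k$-dependent increase in width. Once these ingredients are in place, both the improvement subroutine and the recursion itself are comparatively mechanical, and the claimed bound follows.
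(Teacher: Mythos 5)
The paper offers no proof of \Cref{prop:treewidth}: it is imported verbatim as a citation to Bodlaender's 1996 linear-time algorithm, so there is no in-paper argument to compare against. Your sketch is a faithful high-level reconstruction of the cited result. You correctly isolate the two pillars of Bodlaender's proof: the Bodlaender--Kloks dynamic programme that, given any tree-decomposition of width $\ell$, either shrinks it to width $k$ or certifies $\mathsf{tw}(G) > k$ in time $f(\ell,k)\cdot\abs{V(G)}$ with the dependence on $\abs{V(G)}$ linear because only boundedly many characteristics exist at each node; and the self-reduction that contracts a linear fraction of reducible substructures (friendly/$I$-simplicial or low-degree vertices), recurses on the resulting constant-factor-smaller graph, and lifts the decomposition back with a bounded width penalty before invoking the improvement step. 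The recurrence $T(n)\leq T(\alpha n)+g(k)n$ closing the argument is also right. You candidly flag that the structural lemma guaranteeing a linear supply of independently reducible vertices, and the bookkeeping for lifting decompositions through contractions, carry the real weight and are not established here; that is an honest acknowledgement rather than a gap that can be filled in a short sketch, and it matches the paper's own decision to outsource the result to \cite{Bodlaender1996}.
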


The following is a reformulation of Tutte's block-cut-tree theorem in the language of tree-decompositions. 

\begin{proposition}[follows from~\cite{Schmidt2013}]\label{prop:cutvertextree}
    Let~$G$ be a connected graph.
    Then there exists a tree-decomposition~${(T,\beta)}$ of~$G$ of adhesion one such that, for every~${t \in V(T)}$, the torso of~$G$ on the node~$t$ is a block of~$G$.
    Moreover, this decomposition can be found in time~${\mathcal{O}(\abs{V(G)} + \abs{E(G)})}$. 
\end{proposition}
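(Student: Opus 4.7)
The plan is to derive this statement as a routine reformulation of Tutte's block-cut tree, together with a linear-time block-finding algorithm of the kind discussed in the cited work of Schmidt.

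First I would recall the standard block structure of a connected graph: the \emph{blocks} of~$G$ are the bridge-edges (isomorphic to~$K_2$) together with the maximal $2$-connected subgraphs of~$G$, and any two distinct blocks share at most one vertex, which must then be a cutvertex of~$G$. The classical (bipartite) block-cut tree $\hat T$ has one node for each block~$B$ and one node for each cutvertex~$c$, with an edge between $t_c$ and $t_B$ exactly when $c \in V(B)$; since $G$ is connected, $\hat T$ is indeed a tree.

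Next I would turn $\hat T$ into the desired tree-decomposition $(T,\beta)$ by suppressing the cutvertex nodes. Concretely, for every cutvertex $c$ with block-neighbours $B_1,\dots,B_d$ in~$\hat T$, delete the node~$t_c$ and insert any tree (say, a path) on $\{t_{B_1},\dots,t_{B_d}\}$ in its place; this yields a tree~$T$ whose nodes are in bijection with the blocks of~$G$, and I define $\beta(t_B) := V(B)$. Verifying the axioms of \Cref{def_treewidth} is then routine: every vertex lies in some block and every edge lies in a unique block, giving (T1) and (T2); for (T3), a non-cutvertex~$v$ belongs to exactly one block, while for a cutvertex~$c$ the set $\{t_B : c \in V(B)\}$ equals the subtree we inserted in place of the star at~$t_c$, which is connected by construction.

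Finally I would examine the adhesion and torsos. Each edge $t_Bt_{B'}$ of~$T$ arises from a segment of a path which replaced some former cutvertex node~$t_c$, so $\beta(t_B) \cap \beta(t_{B'}) = \{c\}$ and the adhesion is~$1$. Since all adhesion sets of~$t_B$ are singletons, forming the torso of $(T,\beta)$ at~$t_B$ adds no edges to $G[\beta(t_B)] = G[V(B)]$, and because blocks are induced subgraphs of~$G$ this torso coincides with the block~$B$. For the algorithmic statement, the blocks and cutvertices of~$G$ are computed in time $\mathcal{O}(\abs{V(G)} + \abs{E(G)})$ by a DFS-based procedure (as in Schmidt's certifying algorithm), and the star-to-path replacement is carried out in the same time bound, producing $(T,\beta)$ in linear time. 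I do not anticipate any genuine obstacle here: the only mildly delicate point is to absorb each cutvertex node of~$\hat T$ into its incident block-nodes in a way that preserves the connectedness required by (T3), which the star-to-path replacement handles automatically.
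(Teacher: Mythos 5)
Your proof is correct and matches what the paper intends by citing Schmidt's linear-time block-finding algorithm: the paper offers no argument beyond the reference, and the standard derivation is exactly what you give — build the block-cut tree, suppress each cutvertex node by replacing its star with a path on its block-neighbours, take the vertex sets of blocks as bags, and observe that adhesion sets are singletons so no torso edges are added and each torso is an (induced) block. The verification of the decomposition axioms, the adhesion-one claim, and the linear-time bound are all handled correctly.
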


Moreover, we will need the following extension of \cref{prop:cutvertextree} by Grohe. 

\begin{proposition}[\cite{Grohe2016}]
    \label{prop:4components}
    Every graph~$G$ has a tree-decomposition~${(T,\beta)}$ of adhesion at most~$3$ such that for all~${t \in V(T)}$ the torso of~$G$ on the node~$t$ is a minor of~$G$ that is either quasi-$4$-connected or isomorphic to a complete graph on at most four vertices. 
    Moreover, this decomposition can be found in time~${\mathcal{O}(\abs{V(G)}^3)}$. 
\end{proposition}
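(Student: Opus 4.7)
The plan is to build the tree-decomposition by three successive refinement stages that peel off separators of order~$1$, $2$, and~$3$ in turn, maintaining as an invariant that every torso created is a minor of~$G$.

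Stage~1 is immediate: \Cref{prop:cutvertextree} supplies in linear time an adhesion-$1$ tree-decomposition whose torsos are the blocks of~$G$, each of which is either~$K_2$ or $2$-connected. Stage~2 refines each $2$-connected torso via the classical Hopcroft--Tarjan decomposition into triconnected components. We iteratively locate a $2$-separation~$(A,B)$ with $A\cap B = \{x,y\}$, split the current node into two children with bags~$A$ and~$B$ sharing adhesion $\{x,y\}$, and record the virtual edge~$xy$ in each child torso. This virtual edge is realised as a minor of~$G$ via any $x$--$y$ path inside the opposite side of the split, which exists because the parent torso is $2$-connected. The stage terminates in torsos that are $3$-connected or isomorphic to~$K_3$, and runs in linear time per block.

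Stage~3 is the crux. In each $3$-connected torso, we iteratively split along a \emph{useful} $3$-separation, that is, a $3$-separation~$(A,B)$ with $|A\setminus B|\geq 2$ and $|B\setminus A|\geq 2$. Each split adds a triangle on the separator $S=\{x,y,z\}$ to the two child torsos. To certify the triangle as a minor of~$G$, observe that $3$-connectedness together with $|B\setminus S|\geq 2$ permits, via the fan version of Menger's theorem, the selection of three internally disjoint paths inside~$G[B]$ from an interior vertex of~$B\setminus S$ to $x$, $y$, and~$z$; the additional interior vertex supplied by $|B\setminus S|\geq 2$ then allows us to rearrange these paths into three expanded branch sets for $x,y,z$ whose pairwise adjacencies supply the missing triangle edges, and symmetrically for the $A$-side. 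A torso admitting no useful $3$-separation is by definition quasi-$4$-connected, or is $K_4$ in the degenerate base case.

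The main obstacle is to ensure that Stage~3 produces a \emph{canonical} decomposition rather than one depending on the order in which splits are performed, since two useful $3$-separations may cross. This is handled by the tangle-tree machinery at order~$4$ (equivalently, by an uncrossing argument showing that any two crossing useful $3$-separations can be replaced by a nested pair): the resulting nested family of $3$-separations is then realised simultaneously in a single tree-decomposition, and the uncrossing strictly reduces a well-founded complexity measure so that the process terminates. For the runtime, one useful $3$-separation can be found (or certified absent) in time~$\mathcal{O}(n^2)$ via $\mathcal{O}(n)$ max-flow computations of value~$3$, each taking $\mathcal{O}(n+m)$ time; since the final decomposition contains at most~$\mathcal{O}(n)$ splits overall, the total running time is~$\mathcal{O}(n^3)$, matching the claimed bound.
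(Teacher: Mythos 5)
This proposition is a black-box citation in the paper — Grohe's result on quasi-$4$-connected components is invoked from~\cite{Grohe2016}, and no proof appears in the paper itself, so there is no in-text argument to compare yours against. Your reconstruction, however, does track Grohe's actual strategy: peel off cut-vertices, then $2$-separations (Tutte/Hopcroft--Tarjan), then $3$-separations, and obtain canonicity by working with a nested family of order-$3$ separations extracted via tangles/uncrossing.

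The weakest point of your sketch is the claim that the triangle on a $3$-separator~$S=\{x,y,z\}$ is always a minor. Three internally disjoint paths from a single interior vertex of~$B\setminus S$ to $x,y,z$ yield only a $K_{1,3}$ subdivision; the phrase ``the additional interior vertex \dots\ allows us to rearrange these paths into three expanded branch sets'' is not an argument, and the naive rearrangement can fail (consider when the only two interior vertices of~$B\setminus S$ form separate singleton components of $G[B\setminus S]$, each joined to all of~$S$: here one must place one such vertex in two of the three branch sets, and the pairwise adjacencies arise from the separator's own attachments, not from a fan). The correct proof needs the observation that every component of $G[B\setminus S]$ is joined to all three of $x,y,z$ (else a $2$-separator would exist) together with a short case analysis on whether $G[B\setminus S]$ has one or several components, exactly as Grohe does. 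Similarly, the canonicity/termination of Stage~3 is the technical heart of Grohe's paper; invoking ``tangle-tree machinery at order~$4$'' is the right pointer, but it substitutes a name for the argument. Neither gap is fatal to the overall approach, but both would have to be filled to make this a proof rather than an outline. The running-time accounting ($\mathcal{O}(n)$ candidate separations tested by unit-capacity flows, $\mathcal{O}(n)$ splits overall) is plausible and matches the stated~$\mathcal{O}(n^3)$ bound.
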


Let~$G$ be a graph and let~${(T,\beta)}$ be the tree-decomposition of~$G$ provided by \cref{prop:4components}, we call the torsos of~$G$ at the vertices of~$T$ the \emph{quasi-$4$-components} of~$G$. 
Grohe further shows that, up to isomorphism, these are independent of the choice of tree-decomposition, therefore the name quasi-$4$-components of~$G$ is justified. 

\subsection{Parametric graphs and their families}

Finally, we need a formal way to deal with graph parameters and their (universal) obstructions. 
Let us denote by~$\mathcal{G}_{\text{all}}$ the class of all finite graphs. 

\begin{definition}[Graph parameter]
    A \emph{graph parameter} is a function mapping finite graphs to non-negative integers that is invariant under graph isomorphisms. 
    
    Let~$\preceq$ denote a quasi-order on~$\mathcal{G}_{\text{all}}$, we say that such an order is a \emph{graph containment relation}. 
    
    A graph parameter~${\mathsf{p} \colon \mathcal{G}_{\text{all}} \to \mathbb{N}}$ is \emph{$\preceq$-monotone} if for every graph~$G$ and every~$H$ with~${H \preceq G}$ we have~${\mathsf{p}(H) \leq \mathsf{p}(G)}$. 
    
    Let~$\mathsf{p}$ and~$\mathsf{q}$ be two graph parameters. 
    We say that~$\mathsf{p}$ and~$\mathsf{q}$ are \emph{asymptotically equivalent}, denoted by~${\mathsf{p} \sim \mathsf{q}}$, if there is a function~${f \colon \mathbb{N} \to \mathbb{N}}$ such that, for every graph~$G$ it holds that~${\mathsf{p}(G) \leq f(\mathsf{p}(G))}$ and~${\mathsf{q}(G) \leq f(\mathsf{p}(G))}$. 
\end{definition}

The two graph containment relations we are concerned with in this paper are the minor relation, denoted by~$\minor$, and the \oddminor\ relation, denoted by~$\odd$. 

\begin{definition}[Parametric graph]
    A \emph{parametric graph} is a sequence~${\mathscr{H} = \langle \mathscr{H}_{i} \rangle_{i \in \mathbb{Z}^{+}}}$ of graphs indexed by positive integers. 
    We say that~$\mathscr{H}$ is \emph{$\preceq$-monotone} if for every~${i \in \mathbb{Z}^{+}}$ we have~${\mathscr{H}_i \preceq \mathscr{H}_{i+1}}$. 
\end{definition}

Let~${\mathscr{H}^{(1)} = \langle \mathscr{H}_{k}^{(1)} \rangle_{k \in \mathbb{Z}^{+}}}$ 
and~${\mathscr{H}^{(2)} = \langle \mathscr{H}_{k}^{(2)} \rangle_{k \in \mathbb{Z}^{+}}}$ be two parametric graphs. 
We write ${\mathscr{H}^{(1)} \precsim \mathscr{H}^{(2)}}$ if there is a function~${f \colon \mathbb{Z}^{+} \to \mathbb{Z}^{+}}$ such that~${\mathscr{H}_{k}^{(1)} \preceq \mathscr{H}_{f(k)}^{(2)}}$ for every~${k \in \mathbb{Z}^{+}}$. 
We say that~$\mathscr{H}^{(1)}$ and~$\mathscr{H}^{(2)}$ are \emph{asymptotically equivalent} 
if~${\mathscr{H}^{(1)} \precsim \mathscr{H}^{(2)}}$ 
and~${\mathscr{H}^{(2)} \precsim \mathscr{H}^{(1)}}$ and we denote this by~${\mathscr{H}^{(1)} \approx \mathscr{H}^{(2)}}$. 

A \emph{parametric family} is a finite collection~${\mathfrak{H} = \{ \mathscr{H}^{(j)} \mid j \in [\ell] \}}$ of parametric graphs with~${\mathscr{H}^{(j)} = \langle \mathscr{H}^{(j)}_{i} \rangle_{i \in \mathbb{Z}^{+}}}$ for all~${j \in [\ell]}$, and~${\mathscr{H}^{(i)} \not\precsim \mathscr{H}^{(j)}}$ and~${\mathscr{H}^{(j)} \not\precsim \mathscr{H}^{(i)}}$ for all distinct~${i,j \in [\ell]}$. 
If each parametric graph in~$\mathfrak{H}$ is $\preceq$-monotone, we call~$\mathfrak{H}$ a \emph{$\preceq$-parametric family}. 
We define a graph parameter by setting~${\preceq\text{-}\mathsf{big}_\mathfrak{H}(G)}$ to be the largest integer~$k$ for which there is a ${j \in [\ell]}$. 
By slight abuse of notation, we drop the graph containment relation~$\preceq$ from the name of the parameter, if it is clear from the context. 
In particular, if in this paper we use the parameter without reference to a graph containment relation, we always mean the \oddminor\ relation~$\odd$.
For more on parametric families see \cite{PaulPT2023}.

\subsection{Grids and walls}
\label{subsec:walls}

The Grid Theorem asserts that every graph of large enough treewidth, with respect to some fixed integer $k$, contains every planar graph on~$k$ vertices as a minor. 
The aim of this subsection is to establish a similar theorem for planar and bipartite graphs in the case of the \oddminor\ relation. 

We start by discussing how one can obtain a grid as an \oddminor. 
To do this we use a slight reformulation of the Grid Theorem in terms of walls as these can be found as subgraphs rather than minors which allows for easier arguments later. 
\smallskip

An \emph{$(n\times m)$-grid} is the graph with vertex set~${[n] \times [m]}$ and edge set 
\[
    {\{ \{ (i,j), (i,j+1) \} \mid i \in [n], j \in [m-1] \} \cup \{ \{ (i,j), (i+1,j) \} \mid i \in [n-1], j \in [m] \}}.
\]
We denote by~$\mathscr{G}_k$ the ${(k \times k)}$-grid. 

The \emph{elementary $k$-wall}~$\mathscr{W}_k$ for~${k \geq 3},$ is obtained from the ${(k \times 2k)}$-grid~$G_{k,2k}$ by deleting every odd edge in every odd column and every even edge in every even column, and then deleting all degree-one vertices. 
The \emph{rows} of~$\mathscr{W}_k$ are the subgraphs of~$\mathscr{W}_k$ induced by the rows of~$G_{k,2k}$, 
while the \emph{$j$th column} of~$\mathscr{W}_k$ is the subgraph induced by the vertices of columns~${2j-1}$ and~${2j}$ of~$G_{k,2k}$. 
The \emph{perimeter} of~$\mathscr{W}_k$ is the union of the first column, the $k$th column, the first row, and the $k$th row of~$\mathscr{W}_k$. 
The \emph{corners} of~$\mathscr{W}_k$ are the endvertices of the first and $k$th rows (or equivalently, columns). 
A \emph{brick} of~$\mathscr{W}_k$ is a facial six-cycle of~$\mathscr{W}_k$. 
Given the elementary $2k$-wall~$\mathscr{W}_{2k}$, the \emph{central brick} is the brick containing the vertices~${(k,2k)}$, ${(k,2k+2)}$, ${(k+1,2k+2)}$, and~${(k+1,2k)}$. 

A \emph{$k$-wall}~$W$ is a graph isomorphic to a subdivision of~$\mathscr{W}_k$. 
The vertices of degree three in~$W$ are called the \emph{branch vertices}. 
In other words, $W$ is obtained from a graph~$W'$ isomorphic to~$\mathscr{W}_k$ by subdividing each edge of~$W'$ an arbitrary (possibly zero) number of times. 
We define rows, columns, corners, the perimeter, and bricks of $k$-walls as well as the central brick of a $2k$-wall as the objects corresponding to the respective object of the elementary $k$-wall~$\mathscr{W}_k$ (or the elementary $2k$-wall in the case of central bricks) by implicitly fixing an isomorphism to the subdivision of~$\mathscr{W}_k$. 
A \emph{wall} is a $k$-wall for some~$k$. 

An $h$-wall~$W'$ is a \emph{subwall} of some $k$-wall~$W$ where~${h \leq k}$ if every row (column) of~$W'$ is contained in a row (column) of~$W$ (where the implicit isomorphism between~$W'$ and the subdivision of~$W_h$ is the one induced by the implicit isomorphism between~$W$ and the subdivision of~$\mathscr{W}_k$). 

We are interested in bipartite $k$-walls which adhere to certain additional conditions.

\begin{definition}[Clean wall]
    Let~${k \geq 3}$ be an integer.
    A $k$-wall~$W$ is \emph{clean} if there exists a proper $2$-colouring~$c$ of~$W$ such that all branch vertices and corners are in the same colour class of~$c$. 
\end{definition}

Thomassen~\cite{Thomassen1988} proved that every large enough wall also contains a large clean subwall. 

\begin{proposition}[\cite{Thomassen1988}]
    \label{prop:cleanwall}
    There exists a computable function~${h_{\ref{prop:cleanwall}} \colon \mathbb{N} \to \mathbb{N}}$ such that for every positive integer~$k$, every ${h_{\ref{prop:cleanwall}}(k)}$-wall contains a clean $k$-wall as a subgraph. 
\end{proposition}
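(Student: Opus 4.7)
The plan is to reduce the question to a parity condition on the edges of the underlying elementary wall, and then use a shift / pigeonhole argument over many candidate subwalls to enforce this condition.

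I would first present the given wall as a subdivision of some elementary $h$-wall $\mathscr{W}_h$, with $h$ to be chosen large enough in terms of $k$. Since $\mathscr{W}_h$ is bipartite, so is the wall, and it admits a unique proper $2$-colouring $c$ up to swap. For each edge $e \in E(\mathscr{W}_h)$ let $s_e$ denote the number of times $e$ is subdivided and set $\sigma_e \coloneqq s_e \bmod 2$. Two vertices of $\mathscr{W}_h$ that are the endpoints of an edge $e$ lie in the same class of $c$ in the wall iff $\sigma_e = 1$; inductively, two vertices of $\mathscr{W}_h$ joined by a path of $m$ edges with parities $\sigma_{e_1}, \dotsc, \sigma_{e_m}$ share a class of $c$ iff $m + \sum_{i=1}^{m}\sigma_{e_i}$ is even.

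Next, I would translate cleanness into this language. A $k$-subwall $W'$ of the given wall is clean iff, for every edge $f$ of the elementary $k$-wall underlying $W'$, the corresponding path $P_f$ through $\mathscr{W}_h$ between the two endpoint branch or corner vertices of $f$ satisfies $\abs{E(P_f)} + \sum_{e \in E(P_f)} \sigma_e \equiv 0 \pmod{2}$. So the goal becomes: choose rows and columns of $\mathscr{W}_h$ whose induced subwall obeys this parity condition on all of its elementary edges.

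The main step is then a shift / pigeonhole argument. From the given wall one can extract many candidate $k$-subwalls by taking blocks of $2k$ consecutive rows (respectively columns) of $\mathscr{W}_h$ and, within each block, choosing either the odd- or even-indexed entries; each such choice shifts the subwall by one layer of $\mathscr{W}_h$, toggling the parity contributions along its elementary edges in a controlled way. Because the given wall is bipartite, the parities $\sigma_e$ around each hexagonal brick of $\mathscr{W}_h$ must sum to $0 \bmod 2$, so only boundedly many local parity patterns occur. Pigeonholing on these patterns, simultaneously in the row and column directions, produces a $k$-subwall whose every elementary edge carries the required parity, and which is therefore clean. The main obstacle is exactly this coordination: one must control row- and column-shifts at the same time while respecting the brick constraint, so as to end up with a subwall whose branch vertices and corners are all in the same $c$-class. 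Once this bookkeeping is done, a computable bound $h_{\ref{prop:cleanwall}}(k)$ falls out by tracking how many shifts and how large a pigeonhole pool are needed in each of the two directions.
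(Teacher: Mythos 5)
The opening move already fails: ``Since $\mathscr{W}_h$ is bipartite, so is the wall'' is false. A subdivision of a bipartite graph need not be bipartite --- subdividing one edge of a brick once turns that hexagonal face into a $7$-cycle. The proposition is quantified over \emph{every} $h_{\ref{prop:cleanwall}}(k)$-wall, including non-bipartite ones, so the global proper $2$-colouring $c$ you introduce may not exist. This assumption then propagates to the step where you invoke ``the parities $\sigma_e$ around each hexagonal brick of $\mathscr{W}_h$ must sum to $0 \bmod 2$'': that claim is precisely the assertion that the wall is bipartite, and it is not given. Your translated condition on a candidate subwall, namely $\abs{E(P_f)} + \sum_{e\in E(P_f)}\sigma_e \equiv 0 \pmod 2$ for the relevant paths, is itself self-contained and needs no global colouring, so the derivation should be rerun purely in terms of the $\sigma_e$; but you cannot lean on the brick-by-brick parity cancellation that your pigeonhole step currently uses.

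Beyond that, the part you explicitly set aside as ``bookkeeping'' --- simultaneously coordinating the row- and column-shifts while respecting the parity structure --- is exactly where the substance of the proof lives and where the bound $h_{\ref{prop:cleanwall}}(k)$ is determined; without carrying it through there is no argument, only a plan. The shift-and-pigeonhole template is a sensible shape for a result of this kind, but as written the sketch rests on a bipartiteness hypothesis that the statement does not grant and omits the decisive combinatorial step. (For context: the paper gives no proof of this proposition --- it is cited directly to Thomassen --- so there is no in-text argument against which to compare your route.)
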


\begin{proposition}[follows from~\cites{Bodlaender1996,Diestel1999}]
    \label{prop:gridtheorem}
    There exist computable functions ${f_{\ref{prop:gridtheorem}},g_{\ref{prop:gridtheorem}} \colon \mathbb{N} \to \mathbb{N}}$ and an algorithm that, given a positive integer~${k \geq 3}$ and a graph~$G$ as inputs, 
    in time ${\mathcal{O}(g_{\ref{prop:gridtheorem}}(k) \abs{V(G)})}$, 
    either finds a $k$-wall in~$G$, 
    or concludes correctly that~${\mathsf{tw}(G) \leq f_{\ref{prop:gridtheorem}}(k)}$. 
\end{proposition}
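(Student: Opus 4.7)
The plan is to derive the statement by combining Proposition~\ref{prop:treewidth} (Bodlaender's linear-time FPT algorithm for treewidth) with a constructive version of the Grid Theorem, which is the substance of Diestel's short proofs. Let $f_0 \colon \mathbb{N} \to \mathbb{N}$ be any function such that every graph of treewidth at least $f_0(k)$ contains a $k$-wall as a subgraph; such an $f_0$ exists because the $(k \times k)$-grid minor provided by the Grid Theorem contains a $k$-wall as a topological minor in its subdivided form, and because a wall and a grid of comparable orders are mutually minor-equivalent. Set $f_{\ref{prop:gridtheorem}}(k) \coloneqq f_0(k)$.

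The first step of the algorithm is to invoke Proposition~\ref{prop:treewidth} on the input graph $G$ with parameter $f_0(k)$. In time $\mathcal{O}(g_{\ref{prop:treewidth}}(f_0(k)) \cdot |V(G)|)$ we either obtain a tree-decomposition of $G$ of width at most $f_0(k)$, in which case we output the correct conclusion ``$\mathsf{tw}(G) \leq f_{\ref{prop:gridtheorem}}(k)$'', or we learn that $\mathsf{tw}(G) > f_0(k)$. In the latter case, by the choice of $f_0$, a $k$-wall is guaranteed to exist as a subgraph of $G$, and what remains is to produce one constructively.

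The second step, which is the real content, is the actual extraction of the wall. The approach is to use Diestel's argument: from a graph of sufficiently high treewidth one extracts a highly connected substructure (a bramble or a tangle of high order, which can be read off efficiently either from a failed branch of Bodlaender's algorithm or via an auxiliary direct argument on a subgraph of minimum degree at least three), and then routes a $k$-wall through it. Diestel's proof is inductive and algorithmic: at each stage one either finds the desired wall directly or reduces to a smaller instance where the sought structure is easier to extract. Implemented carefully, this produces a $k$-wall in time $\mathcal{O}(g_1(k) \cdot |V(G)|)$ for some computable $g_1$, and then $g_{\ref{prop:gridtheorem}}(k)$ is obtained by taking the sum of (or the maximum of) $g_{\ref{prop:treewidth}}(f_0(k))$ and $g_1(k)$.

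The main obstacle lies in this second step: the original Robertson--Seymour proof of the Grid Theorem is not efficient, and to meet the linear dependence on $|V(G)|$ we need Diestel's streamlined constructive proof together with the structural certificate (the bramble or tangle of order greater than $f_0(k)$) that Bodlaender's algorithm produces on a ``no''-instance. Verifying that this certificate, once in hand, allows the wall to be assembled in time linear in $|V(G)|$ with only a computable factor depending on $k$ is the technical heart of the argument; the bookkeeping of the subdivision paths making up the wall's columns and rows is what costs the function $g_1(k)$ in the overhead.
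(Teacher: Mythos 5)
The paper itself offers no proof of this proposition — it is cited to Bodlaender (1996) and Diestel et al.\ (1999) — so there is no internal argument to compare against word for word. Your reconstruction gets the shape of the combination right: invoke Bodlaender's linear-FPT treewidth algorithm with threshold $f_0(k)$, output the decomposition if one is produced, and otherwise extract a wall using the fact that $\mathsf{tw}(G) > f_0(k)$ forces a $k$-wall to exist, where $f_0$ is the bound from the excluded-grid theorem of Diestel, Gorbunov, Jensen, and Thomassen. Up to that point you have correctly identified the respective roles of the two references.

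The genuine gap is in the second step, and the two mechanisms you propose there do not work as stated. First, \Cref{prop:treewidth} as quoted only certifies ``$\mathsf{tw}(G) > k$'' on a \emph{no}-instance; it does not emit a bramble or a tangle, and there is no ``failed branch'' of that black-box from which such an obstruction can be read off without re-opening Bodlaender's proof. Second, passing to a subgraph of minimum degree at least three establishes nothing: a graph may have minimum degree three and treewidth three (e.g.\ $K_4$), so no wall of the required order is forced, and conversely high treewidth does not yield a bramble just from a degree condition. What is actually needed, and what is suppressed in your write-up, is a way to locate a \emph{subgraph} $H$ of $G$ with $f_0(k) < \mathsf{tw}(H) \le f_0(k) + c$ for some absolute constant $c$, then compute a width-$(f_0(k)+c)$ tree-decomposition of $H$ by a second call to \Cref{prop:treewidth}, and finally run a dynamic-programming routine over that bounded-width decomposition to output a topological $k$-wall inside $H$. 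The DP step is routine; the delicate part, and the one that actually costs the work in meeting the $\mathcal{O}(g(k)\abs{V(G)})$ bound, is isolating such an $H$ in linear time rather than by $\abs{V(G)}$ incremental Bodlaender calls (which would be quadratic). Your proposal neither names this intermediate subgraph step nor supplies a linear-time method for it, and naming ``Diestel's argument'' does not bridge the gap, since the Diestel--Gorbunov--Jensen--Thomassen proof is an existence argument, not an algorithmic one.
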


To obtain a grid theorem for \oddminors, and a wall variant, we now combine the two theorems above with \cref{prop:bipartite-odd-minor-equiv}.

\begin{corollary}
    \label{cor:cleanwall}
    There exist computable functions ${f_{\ref{cor:cleanwall}},g_{\ref{cor:cleanwall}} \colon \mathbb{N} \to \mathbb{N}}$ and an algorithm that, given a positive integer~${k \geq 3}$ and a graph~$G$ as inputs, 
    in time~${\mathcal{O}(g_{\ref{cor:cleanwall}}(k) \abs{V(G)})}$, 
    either finds a clean $k$-wall in~$G$, 
    or concludes correctly that~${\mathsf{tw}(G) \leq f_{\ref{cor:cleanwall}}(k)}$. 
\end{corollary}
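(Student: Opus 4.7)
The plan is to compose \Cref{prop:gridtheorem} with \Cref{prop:cleanwall}, and then observe that the latter can be made algorithmic at no asymptotic cost. Setting~${n := h_{\ref{prop:cleanwall}}(k)}$, I would first run the algorithm of \Cref{prop:gridtheorem} on~$G$ with parameter~$n$. In time~${\mathcal{O}(g_{\ref{prop:gridtheorem}}(n) \cdot \abs{V(G)})}$, this either returns an $n$-wall~$W$ in~$G$ or certifies ${\mathsf{tw}(G) \leq f_{\ref{prop:gridtheorem}}(n)}$. In the latter case the corollary is satisfied by setting ${f_{\ref{cor:cleanwall}}(k) := f_{\ref{prop:gridtheorem}}(h_{\ref{prop:cleanwall}}(k))}$.

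If instead we are handed an $n$-wall~$W$, then by \Cref{prop:cleanwall} it contains a clean $k$-wall as a subgraph, and it remains only to locate one. To do so, I would first identify the~${\mathcal{O}(n^2)}$ branch vertices of~$W$ and, by a single traversal of the subdivided paths of~$W$, record the parity of the length of each edge of the underlying elementary $n$-wall~$\mathscr{W}_n$. This preprocessing takes time~${\mathcal{O}(\abs{V(G)})}$. Whether a candidate $k$-subwall of~$W$ is clean is determined solely by this parity vector together with the choice of rows and columns defining the subwall within~$\mathscr{W}_n$, so a clean $k$-subwall can be located by brute-force inspection over all subwall configurations in~$\mathscr{W}_n$ in time depending only on~$n$, and hence only on~$k$.

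The main obstacle is turning the existential content of \Cref{prop:cleanwall} into an explicit algorithm. Thomassen's argument reduces to a pigeonhole-style selection of rows and columns whose parity patterns along the subdivided edges of~$W$ agree, and because this selection is governed by only~${\mathcal{O}(n^2)}$ binary values, it can be carried out by exhaustive search over the bounded combinatorial structure of~$\mathscr{W}_n$ once the parities have been computed. Collecting the bounds from the two phases, the total running time is~${\mathcal{O}(g_{\ref{cor:cleanwall}}(k) \cdot \abs{V(G)})}$ for a suitable computable function~${g_{\ref{cor:cleanwall}}}$, yielding the claim.
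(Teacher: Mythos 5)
Your proposal matches the paper's (implicit) route: apply \Cref{prop:gridtheorem} with parameter~${h_{\ref{prop:cleanwall}}(k)}$ to either certify bounded treewidth or obtain a wall, then invoke \Cref{prop:cleanwall} to extract a clean $k$-subwall. The paper leaves this entirely to the reader, so your elaboration of why the selection of a clean subwall is constructive -- that cleanness is governed by the~${\mathcal{O}(h_{\ref{prop:cleanwall}}(k)^2)}$ parities of subdivided paths, computable in a single linear-time traversal, after which the subwall can be located by a search over a configuration space whose size depends only on~$k$ -- is a correct and welcome addition that justifies the stated running time rather than assuming it.
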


\begin{corollary}
    \label{cor:oddgridtheorem}
    There exists a computable function~${f_{\ref{cor:oddgridtheorem}} \colon \mathbb{N} \to \mathbb{N}}$ such that for every positive integer~$k$, every graph with~${\mathsf{tw}(G) \geq f_{\ref{cor:oddgridtheorem}}(k)}$ contains the ${(k\times k)}$-grid as an \oddminor. 
\end{corollary}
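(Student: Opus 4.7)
The plan is to deduce the corollary by composing~\Cref{cor:cleanwall} with~\Cref{prop:bipartite-odd-minor-equiv}, which characterises a bipartite graph as an \oddminor\ of~$G$ precisely when some bipartite subdivision of it sits inside~$G$ as a subgraph. The first step is to pick an integer~${N = N(k)}$, which may be taken to be~${\mathcal{O}(k)}$, such that the elementary wall~$\mathscr{W}_{N}$ contains the~${(k \times k)}$-grid~$\mathscr{G}_k$ as a topological minor in which each grid-branch-vertex coincides with a branch vertex of~$\mathscr{W}_{N}$. This is a classical wall-to-grid construction: take every other row and column and route the necessary ``jogs'' through the hexagonal bricks. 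One then sets~${f_{\ref{cor:oddgridtheorem}}(k) := f_{\ref{cor:cleanwall}}(N(k))}$.

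Now assume~${\mathsf{tw}(G) \geq f_{\ref{cor:oddgridtheorem}}(k)}$. By~\Cref{cor:cleanwall}, $G$ contains a clean~$N$-wall~$W$: a subdivision of~$\mathscr{W}_{N}$ together with a proper $2$-colouring~$c$ that places every branch vertex of~$W$ into a single colour class. Lifting the topological embedding of~$\mathscr{G}_k$ from~$\mathscr{W}_{N}$ through the subdivision~$W$ produces, for each edge of~$\mathscr{G}_k$, a path in~$W$ joining two branch vertices of~$W$. Since both endpoints of any such path lie in the same class of~$c$ and~$W$ is bipartite, the path must have even length. The lifted embedding therefore realises a bipartite subdivision of~$\mathscr{G}_k$ inside~${W \subseteq G}$, and~\Cref{prop:bipartite-odd-minor-equiv} yields~${\mathscr{G}_k \odd G}$.

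The only genuinely delicate point is the topological-minor construction in the first paragraph: one must ensure that \emph{both} endpoints of every grid-edge are realised by branch vertices of the wall, not by arbitrary interior vertices of its brick paths, so that the cleanness hypothesis has teeth. Once this is established, the entire argument is a mechanical chaining of results already stated in the excerpt, with the parity preservation of the lifted paths doing all of the real work; no further combinatorial analysis of the wall itself is needed.
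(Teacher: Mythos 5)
Your argument has a genuine gap precisely at the step you flag as ``the only genuinely delicate point.'' The grid~$\mathscr{G}_k$ has maximum degree~$4$ for~${k \geq 3}$ (its interior vertices), while every wall has maximum degree~$3$. Subdividing a graph leaves vertex degrees unchanged, so no subdivision of~$\mathscr{G}_k$ can be a subgraph of any wall; in particular, $\mathscr{G}_k$ is \emph{not} a topological minor of~$\mathscr{W}_N$ for any~$N$. The ``classical wall-to-grid construction'' you invoke is a \emph{minor} model, in which each grid vertex is represented by a nontrivial connected branch set inside the wall that is subsequently contracted --- it is not a topological embedding with grid vertices sitting on single branch vertices, and no choice of~$N(k)$ can make it one. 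There is also a secondary parity confusion: cleanness forces the lifted paths between branch vertices to have \emph{even} length, which means each grid edge would be subdivided an \emph{odd} number of times, whereas a bipartite subdivision in the sense of~\Cref{cor:bipartite-subdivision} requires each edge to be subdivided an \emph{even} number of times; so even if the subdivision existed, that corollary would not be the one to cite.

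The intended argument is shorter and avoids subdivisions of the grid entirely. A clean wall is in particular bipartite, and every $N$\nobreakdash-wall~$W$ contains~$\mathscr{G}_k$ as an ordinary minor once~$N$ is on the order of~$2k$ (via the usual branch-set model). Since both~$\mathscr{G}_k$ and the clean wall~$W$ produced by~\Cref{cor:cleanwall} are bipartite, \Cref{prop:bipartite-odd-minor-equiv} applies directly and yields~${\mathscr{G}_k \odd W}$; as~$W$ is a subgraph of~$G$, transitivity of~$\odd$ gives~${\mathscr{G}_k \odd G}$. This is the combination of~\Cref{cor:cleanwall} with~\Cref{prop:bipartite-odd-minor-equiv} that the paper is alluding to.
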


\subsection{%
\texorpdfstring{Single-crossing-minor-free graphs and $\mathcal{P}$-blind-treewidth}%
{Single-crossing-minor-free graphs and P-blind-treewidth}}

Given some family of non-planar and non-bipartite graph~$\mathcal{H}$ and a graph~$G$ which does not contain any member of~$\mathcal{H}$ as an \oddminor.
One possibility for~$G$ could be that~$G$ does not contain any member of~$\mathcal{H}$ even as a minor. 
In this situation, we lose any control over the structure of odd cycles in~$G$. 
However, in this case we enter the realm of $\mathcal{H}$-minor-free graphs and, if a structure theorem for such graphs is known, we may apply this instead of analysing the structure of~$G$ with respect to parity. 
One instance where this will occur in this paper is the exclusion of parity-crossing graphs as \oddminors. 

Consider the following parametric graph. 
For~${k \in \mathbb{Z}^{+}}$ let~$\mathscr{U}_k$ be the graph obtained from the ${(2k\times 2k)}$-grid by adding the edges~${\{ (k,k), (k+1,k+1) \}}$ and~${\{ (k+1,k), (k,k+1) \}}$. 
We call~${\mathscr{U} = \langle \mathscr{U}_k \rangle_{k \in \mathbb{Z}^{+}}}$ the \emph{single-crossing grid} and the graph~$\mathscr{U}_k$ the single-crossing grid of order~$k$. 
See \Cref{figure:singlecrossinggrid} for an illustration.
A graph is called \emph{singly crossing} if it can be embedded in the plane with a single crossing and a graph is a \emph{single-crossing minor} if it is a minor of some singly crossing graph. 
Clearly, $\mathscr{U}_k$ is singly crossing. 
Moreover, for every single-crossing minor~$H$ there exists some~${k_H \in \mathbb{Z}^{+}}$ such that~${H \leq_{\mathsf{minor}} \mathscr{U}_{k_h}}$. 

We now discuss the single-crossing-minor-free structure theory in the context of the parameter $\mathcal{P}$-blind-treewidth.

\begin{lemma}
    \label{lem:P-tw-single-crossing-grid}
    It holds that ${\mathcal{P}\text{-}\mathsf{blind}\text{-}\mathsf{tw}(\mathscr{U}_k) = \mathsf{tw}(\mathcal{U}_k) + 1}$ for every integer~${k \geq 3}$. 
\end{lemma}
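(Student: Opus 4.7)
The upper bound $\mathcal{P}\text{-}\mathsf{blind}\text{-}\mathsf{tw}(\mathscr{U}_k) \leq \mathsf{tw}(\mathscr{U}_k)+1$ is immediate: take any optimal tree decomposition of $\mathscr{U}_k$; every bag has size at most $\mathsf{tw}(\mathscr{U}_k)+1$, so regardless of which bag torsos are planar the $\mathcal{P}$-blind-width of this particular decomposition is bounded by $\mathsf{tw}(\mathscr{U}_k)+1$.

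For the lower bound $\mathcal{P}\text{-}\mathsf{blind}\text{-}\mathsf{tw}(\mathscr{U}_k) \geq \mathsf{tw}(\mathscr{U}_k)+1$, I would fix an arbitrary tree decomposition $(T,\beta)$ of $\mathscr{U}_k$ and exhibit a bag of size at least $\mathsf{tw}(\mathscr{U}_k)+1$ whose torso is non-planar. The key structural observation is that the four crossing vertices $c_1 = (k,k)$, $c_2 = (k,k{+}1)$, $c_3 = (k{+}1,k)$, $c_4 = (k{+}1,k{+}1)$, together with the four grid edges between them and the two added diagonal edges, induce a copy of $K_4$ in $\mathscr{U}_k$. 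By the Helly property applied to the subtrees $\{t \in V(T) : v \in \beta(t)\}$, some single bag $B^{\times}$ of $(T,\beta)$ contains all four crossing vertices; in particular the torso of $B^{\times}$ contains $K_4$ on these four vertices as a subgraph.

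I would then split into cases according to whether $B^{\times}$ itself witnesses the treewidth. When $|B^{\times}| \geq \mathsf{tw}(\mathscr{U}_k)+1$, the hypothesis $k\geq 3$ gives $|B^{\times}|\geq 2k+1\geq 7$, so $B^{\times}$ contains at least one further vertex $v \notin \{c_1,c_2,c_3,c_4\}$; I would then analyse $\mathscr{U}_k - B^{\times}$ and show that some component $C$ has all of $c_1,c_2,c_3,c_4,v$ in its neighbourhood, whereby the torso operation adds a clique $K_5$ on these vertices. When $|B^{\times}| < \mathsf{tw}(\mathscr{U}_k)+1$, I would take any bag $\beta(t_0)$ of size at least $\mathsf{tw}(\mathscr{U}_k)+1$ and exhibit a $K_{3,3}$- or $K_5$-minor in its torso by routing three internally vertex-disjoint grid paths through $\mathscr{U}_k - \beta(t_0)$ that carry the central crossing towards suitable vertices of $\beta(t_0)$, using that the adhesion sets along the $T$-path from $t_0$ to $t^{\times}$ propagate the crossing structure.

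The main obstacle will be the second case. One has to rule out adversarial situations where $B^{\times}$ ``walls off'' some crossing vertex by containing all of its grid neighbours, and one must make precise the routing of the three disjoint paths realising the forbidden minor in the torso of $\beta(t_0)$. The hypothesis $k\geq 3$ is essential throughout, as it gives enough room around the central $2\times 2$ block for the vertex-disjoint routing to succeed simultaneously for all required paths; indeed, for $k=1$ the graph $\mathscr{U}_1=K_4$ is planar and thus has $\mathcal{P}\text{-}\mathsf{blind}\text{-}\mathsf{tw}(\mathscr{U}_1)=0$, which already shows why the lower bound genuinely requires $k\geq 3$.
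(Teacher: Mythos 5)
Your upper bound is fine, and your reduction of the lower bound to exhibiting a bag of size $\geq \mathsf{tw}(\mathscr{U}_k)+1$ with non-planar torso is the right target. The Helly observation that some bag $B^{\times}$ contains all four crossing vertices is correct. But both cases of your case distinction have genuine gaps, and the overall route misses the leverage that makes the paper's proof short.

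In Case~1, the claim that some component of $\mathscr{U}_k - B^{\times}$ has all of $c_1,c_2,c_3,c_4,v$ in its neighbourhood is false: if $B^{\times}$ contains $c_1,\dots,c_4$ \emph{together with all of their neighbours in $\mathscr{U}_k$} (which can easily happen once $|B^{\times}|\geq 2k+1$), then no component of $\mathscr{U}_k - B^{\times}$ is adjacent to any $c_i$, and the torso adds no edges at the crossing. The non-planarity of the torso must then be witnessed either by $\mathscr{U}_k[B^{\times}]$ itself or by a large clique on the neighbourhood of some component, and your argument addresses neither. In Case~2, ``routing three internally vertex-disjoint grid paths through $\mathscr{U}_k - \beta(t_0)$'' does not produce a minor in the torso of $\beta(t_0)$: any path that leaves $\beta(t_0)$ and travels through a single component of $\mathscr{U}_k - \beta(t_0)$ contributes to the torso only a clique on that component's neighbourhood inside $\beta(t_0)$, and the path's internal structure (and hence the crossing it carries) is lost. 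You would need a much more careful argument about which adhesion sets arise, and at that point you are essentially reconstructing a planarity-driven analysis.

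That analysis is exactly what the paper's proof does, via the contrapositive. Assuming the $\mathcal{P}$-blind-width is at most $\mathsf{tw}(\mathscr{U}_k)$ forces the existence of a big bag $\beta(t)$ whose torso $H$ \emph{is planar}, and it is the planarity assumption that does all the work: it immediately gives that every adhesion set at $t$ has size at most $4$ (else $K_5 \subseteq H$) and that no adhesion set of size $4$ is a minimal separator (else a $K_5$-minor appears). Since in $\mathscr{U}_k$ every interior vertex has a size-$4$ minimal separator as its neighbourhood (and the four crossing vertices have degree $5$), these two constraints force $\beta(t)$ to contain the whole interior $[2,2k-1]^2$. Contracting $H$ minus the four crossing vertices to a single vertex then produces $K_5$ in $H$, the contradiction. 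Without the planarity assumption you have no control over the adhesion sets or the shape of $\beta(t)$, which is precisely why your Case~1 and Case~2 run into trouble. I would recommend reworking the proof around the contrapositive and the adhesion bounds rather than trying to exhibit the $K_5$ or $K_{3,3}$ minor directly in an arbitrary decomposition.

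Your side observation about $k=1$ is correct ($\mathscr{U}_1 = K_4$ is planar, so $\mathcal{P}\text{-}\mathsf{blind}\text{-}\mathsf{tw}(\mathscr{U}_1)=0$), though the hypothesis $k\geq 3$ is used in the paper's proof not for any routing but simply to guarantee that the crossing vertices lie in the interior $[2,2k-1]^2$ and that the interior minus the four crossing vertices is connected.
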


\begin{proof}
    Fix an integer~${k \geq 3}$. 
    Clearly, ${\mathcal{P}\text{-}\mathsf{blind}\text{-}\mathsf{tw}(\mathscr{U}_k) \leq \mathsf{tw}(\mathscr{U}_k) + 1}$. 
    Suppose for a contradiction that ${k' \coloneqq \mathcal{P}\text{-}\mathsf{blind}\text{-}\mathsf{tw}(\mathscr{U}_k) \leq \mathsf{tw}(\mathcal{U}_k)}$. 
    Let~$\mathcal{T} = (T,\beta)$ be a tree-decomposition of~$\mathscr{U}_k$ of $\mathcal{P}$-blind-width~${k'}$. 
    Since by assumption~${k' \leq \mathsf{tw}(\mathscr{U}_k)}$, there is a node~${t \in V(T)}$ such that~${\abs{\beta(t)} > k'}$ and the torso~$H$ on~$t$ is planar. 
    
    We may assume without loss of generality that for every adhesion set~$S$ of~$t$, every vertex in~$S$ has a neighbour in~${\beta(t) \setminus S}$. 
    First observe that, since~$H$ is planar, the adhesion of~$t$ is at most~$4$, since otherwise~$H$ would contain a subgraph isomorphic to~$K_5$. 
    Moreover, no adhesion set is a minimal separator of size~$4$, since otherwise~$H$ would contain a $K_5$-minor. 
    Hence, $\beta(t)$ contains~${\{ (i,j) \mid i,j \in [2,2k-1] \}}$. 
    But then, contracting each edge of~${H - \{(k,k), (k+1,k), (k,k+1), (k+1,k+1)\}}$ yields a $K_5$-minor in~$H$. 
\end{proof}

\begin{corollary}
    \label{cor:pb-f(P-tw)}
    It holds that~${\mathsf{big}_{\mathscr{U}}(G) \leq (\mathcal{P}\text{-}\mathsf{blind}\text{-}\mathsf{tw}(G) - 1)/2 + 2}$ for every graph~$G$. 
\end{corollary}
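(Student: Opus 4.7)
The plan is to derive the inequality from \Cref{lem:P-tw-single-crossing-grid}, the standard lower bound $\mathsf{tw}(\mathscr{G}_n) \geq n$ on the treewidth of an $(n \times n)$-grid, and minor-monotonicity of $\mathcal{P}$-blind-treewidth. Set $k \coloneqq \mathsf{big}_{\mathscr{U}}(G)$, so that $\mathscr{U}_k \minor G$.

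First I would establish that $\mathcal{P}\text{-}\mathsf{blind}\text{-}\mathsf{tw}$ is $\minor$-monotone. Given an expansion $\eta$ of a minor $H$ of $G$ and a tree-decomposition $(T, \beta)$ of $G$, I would push it down to a tree-decomposition $(T, \beta')$ of $H$ by $\beta'(t) \coloneqq \{v \in V(H) \mid \eta(v) \cap \beta(t) \neq \emptyset\}$. A quick count shows $\abs{\beta'(t)} \leq \abs{\beta(t)}$. A short case analysis on a single edge contraction and a single vertex or edge deletion then verifies that the torso of $\beta'(t)$ in $H$ is either a subgraph or a minor of the torso of $\beta(t)$ in $G$; the only delicate case is contracting an edge $uv$ with $u,v \notin \beta(t)$, which is harmless because both endpoints then necessarily belong to the same component of $G-\beta(t)$, so the added clique on that component's neighbourhood in $\beta(t)$ is unchanged. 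Since planarity is minor-closed, $(G,\beta(t)) \in \mathcal{P}$ entails $(H,\beta'(t)) \in \mathcal{P}$, and therefore $\mathcal{P}\text{-}\mathsf{blind}\text{-}\mathsf{tw}(H) \leq \mathcal{P}\text{-}\mathsf{blind}\text{-}\mathsf{tw}(G)$.

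For $k \geq 3$, specialising to $H = \mathscr{U}_k$ and invoking \Cref{lem:P-tw-single-crossing-grid} yields $\mathcal{P}\text{-}\mathsf{blind}\text{-}\mathsf{tw}(G) \geq \mathcal{P}\text{-}\mathsf{blind}\text{-}\mathsf{tw}(\mathscr{U}_k) = \mathsf{tw}(\mathscr{U}_k)+1$. Because $\mathscr{U}_k$ contains the $(2k \times 2k)$-grid as a subgraph, $\mathsf{tw}(\mathscr{U}_k) \geq 2k$, and hence $\mathcal{P}\text{-}\mathsf{blind}\text{-}\mathsf{tw}(G) \geq 2k+1$. Rearranging gives $k \leq (\mathcal{P}\text{-}\mathsf{blind}\text{-}\mathsf{tw}(G)-1)/2 \leq (\mathcal{P}\text{-}\mathsf{blind}\text{-}\mathsf{tw}(G)-1)/2 + 2$, as required. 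The remaining cases $k \in \{0,1,2\}$ are handled by inspection: the right-hand side is already at least $3/2$, and the borderline value $k=2$ is covered by the same minor-monotonicity argument together with the observation that any graph containing the non-planar $\mathscr{U}_2$ as a minor forces $\mathcal{P}\text{-}\mathsf{blind}\text{-}\mathsf{tw}(G) \geq 1$.

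The main obstacle is the case analysis for minor-monotonicity, specifically contracting an edge whose two endpoints both lie outside $\beta(t)$. The key observation that rescues that case is that such an edge witnesses the connectedness of its endpoints in $G-\beta(t)$, so no merging of distinct components---and hence no enlargement of the torso's added cliques---occurs, and planarity is preserved bag by bag.
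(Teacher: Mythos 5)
Your proposal is correct and follows the same route as the paper: minor\hyp monotonicity of $\mathcal{P}\text{-}\mathsf{blind}\text{-}\mathsf{tw}$ (which the paper asserts in one line and you justify in more detail), the computation $\mathcal{P}\text{-}\mathsf{blind}\text{-}\mathsf{tw}(G) \geq \mathcal{P}\text{-}\mathsf{blind}\text{-}\mathsf{tw}(\mathscr{U}_k) = \mathsf{tw}(\mathscr{U}_k)+1 \geq 2k+1$ via \Cref{lem:P-tw-single-crossing-grid}, and a separate check for $k \leq 2$. One small caveat on your $k=2$ case: the observation that $\mathscr{U}_2$ is merely non-planar does not by itself force $\mathcal{P}\text{-}\mathsf{blind}\text{-}\mathsf{tw}(G) \geq 1$ (for instance $K_{3,3}$ is non-planar yet admits a tree-decomposition into four bags of size at most~$4$ whose torsos are $K_3$ and three copies of $K_4$, all planar, so $\mathcal{P}\text{-}\mathsf{blind}\text{-}\mathsf{tw}(K_{3,3})=0$); the clean way to close that case is to note that $K_5 \minor \mathscr{U}_2$ and $\mathcal{P}\text{-}\mathsf{blind}\text{-}\mathsf{tw}(K_5)=5$, since every tree-decomposition of $K_5$ has a bag equal to $V(K_5)$ whose torso is $K_5$ itself, giving $\mathcal{P}\text{-}\mathsf{blind}\text{-}\mathsf{tw}(G)\geq 5$ by the monotonicity you already established.
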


\begin{proof}
    Let~$G$ be a graph with~${\mathsf{big}_{\mathscr{U}}(G) = k}$ for some positive integer~$k$. 
    If~${k \leq 2}$, the statement clearly holds. 
    Since treewidth is $\leq_{\mathsf{minor}}$-monotone and planarity is closed under minors, we observe that~$\mathcal{P}\text{-}\mathsf{blind}\text{-}\mathsf{tw}$ is $\leq_{\mathsf{minor}}$-monotone. 
    Using \Cref{lem:P-tw-single-crossing-grid}, we calculate 
    \[
        \mathcal{P}\text{-}\mathsf{blind}\text{-}\mathsf{tw}(G) \geq \mathcal{P}\text{-}\mathsf{blind}\text{-}\mathsf{tw}(\mathscr{U}_k) = \mathsf{tw}(\mathscr{U}_k)+1 \geq \mathsf{tw}(\mathscr{G}_{2k})+1 = 2k + 1 = 2\mathsf{big}_{\mathscr{U}}(G) + 1.\qedhere
    \]
\end{proof}

\begin{proposition}[Follows from \cite{RobertsonS1993,GiannopoulouTW23-singlecrossing,Grohe2016,Bodlaender1996}]
    \label{prop:singlecrossing}
    It holds that ${\mathcal{P}\text{-}\mathsf{blind}\text{-}\mathsf{tw} \sim \mathsf{big}_{\mathscr{U}}}$. 
    In particular, there exist computable functions~${f_{\ref{prop:singlecrossing}}, g_{\ref{prop:singlecrossing}} \colon \mathbb{N} \to \mathbb{N}}$ and an algorithm that, given a positive integer~$k$ and a graph~$G$ as inputs, in time $\mathcal{O}(g_{\ref{prop:singlecrossing}}(k) \abs{V(G)}^3)$, finds either
    \begin{enumerate}
        \item an expansion of the single-crossing grid of order~$k$, or
        \item a tree-decomposition~${(T,\beta)}$ of $\mathcal{P}$-blind-width~$f_{\ref{prop:singlecrossing}}(k)$, where, in particular, for every node~${t \in V(T)}$ with~${\abs{\beta(t)} > f_{\ref{prop:singlecrossing}}(k)}$, we have that the torso of~$G$ on the node~$t$ is a planar quasi-$4$-component of~$G$ and the adhesion of~$t$ is at most~$3$. 
    \end{enumerate}
\end{proposition}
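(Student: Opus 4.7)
The plan is to combine three ingredients: Grohe's quasi-$4$-component decomposition (Proposition \ref{prop:4components}), Bodlaender's treewidth algorithm (Proposition \ref{prop:treewidth}), and the Robertson--Seymour single-crossing-minor-free structure theorem \cite{RobertsonS1993}, in the refined algorithmic form of \cite{GiannopoulouTW23-singlecrossing}. Corollary \ref{cor:pb-f(P-tw)} already shows that $\mathsf{big}_{\mathscr{U}}(G) \leq (\mathcal{P}\text{-}\mathsf{blind}\text{-}\mathsf{tw}(G)-1)/2 + 2$, so the remaining task is: given $G$ and $k$, constructively produce either an inflated copy of $\mathscr{U}_k$ in $G$ or a tree-decomposition of $G$ of the specified form.

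First, I would apply Proposition \ref{prop:4components} to obtain, in time $\mathcal{O}(|V(G)|^3)$, a tree-decomposition $(T_0,\beta_0)$ of adhesion at most $3$ whose torsos are the quasi-$4$-components of $G$. This serves as the skeleton of the output; the remaining work only refines individual nodes by locally splicing in new tree-decompositions, which is safe because each adhesion set has size at most $3$ and forms a clique in the incident torsos, so it fits into a single bag of any local refinement.

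Second, fix a threshold $w := f_{\ref{prop:gridtheorem}}(k')$ for a sufficiently large $k'$ depending on $k$ (determined by the third step below). For each node $t \in V(T_0)$ with $|\beta_0(t)| > w$, run Bodlaender's algorithm on the torso $H_t$. If $\mathsf{tw}(H_t) \leq w$, splice into $t$ the resulting tree-decomposition of $H_t$ of width $w$, attaching the neighbouring subtrees of $(T_0,\beta_0)$ at bags containing the corresponding adhesion cliques. Otherwise $\mathsf{tw}(H_t) > w$, so Proposition \ref{prop:gridtheorem} yields a $k'$-wall inside $H_t$; now test planarity of $H_t$ in linear time. If $H_t$ is planar, keep $t$ as a single (planar) bag with adhesion at most $3$, which is exactly what the output allows; otherwise proceed to the main step.

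The main and hardest step is to turn a non-planar quasi-$4$-connected torso $H_t$ containing a large wall into an inflated copy of $\mathscr{U}_k$ in $H_t$, and hence in $G$. This is the algorithmic content of \cite{GiannopoulouTW23-singlecrossing} building on \cite{RobertsonS1993}. One selects a large subwall $W$ so that a $K_5$- or $K_{3,3}$-subdivision $K$ witnessing non-planarity can be separated from a chosen central portion of $W$; quasi-$4$-connectivity of $H_t$ then supplies four vertex-disjoint paths linking the branch vertices of a crossing of $K$ to four well-separated bricks of $W$. Rerouting these paths through $W$ so that they meet inside a single central brick of a slightly smaller subwall, while keeping the surrounding grid structure intact, yields an inflated copy of $\mathscr{U}_k$ in $H_t$. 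Since the torso $H_t$ is a minor of $G$ in the strong sense that each added adhesion edge is realised by a connected subgraph of $G$, this lifts to an inflated copy of $\mathscr{U}_k$ in $G$. The total running time is $\mathcal{O}(g_{\ref{prop:singlecrossing}}(k)|V(G)|^3)$, dominated by Grohe's decomposition.
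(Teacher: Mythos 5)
Your proposal is correct and matches the paper's own argument almost step for step: apply Grohe's quasi-$4$-component decomposition (\cref{prop:4components}), run Bodlaender's algorithm and the grid theorem (\cref{prop:treewidth,prop:gridtheorem}) on the non-planar torsos to either split them into small bags or extract a large wall, and in the latter case invoke the algorithmic single-crossing extraction from~\cite{GiannopoulouTW23-singlecrossing} to produce the $\mathscr{U}_k$-expansion, with \cref{cor:pb-f(P-tw)} closing the asymptotic equivalence. The only cosmetic difference is that you test planarity of a torso after finding a wall rather than before dispatching to Bodlaender, which makes no difference to correctness or running time.
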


Robertson and Seymour's original work~\cite{RobertsonS1993} proved the existence of the tree-decomposi-tion in single-crossing-minor-free graphs. 
An algorithmic version like this can be obtained by first applying \cref{prop:4components} and refining this tree-decomposition by applying \Cref{prop:treewidth,prop:gridtheorem} to each quasi-$4$-component whose torso is non-planar. 
In the case that we find the wall, we can use Corollary 2.13 from the arXiv version of~\cite{GiannopoulouTW23-singlecrossing} to find the $\mathscr{U}_k$-expansion. 
With \Cref{cor:pb-f(P-tw)}, the asymtotic equivalence of the parameters follows. 

\section{\Oddminors\ and bipartite graphs}
\label{sec:bipartite}

We begin by making some basic observations on the behaviour of the \oddminor\ relation in bipartite graphs.

\begin{proposition}
    \label{prop:bipartite-odd-minor-equiv}
    A bipartite graph~$H$ is a minor of a bipartite graph~$G$ if and only if~$H$ is an \oddminor\ of~$G$. 
\end{proposition}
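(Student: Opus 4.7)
The forward direction is immediate from the definitions: every odd $H$-expansion is in particular an $H$-expansion, so $H \odd G$ implies $H \minor G$ without using bipartiteness anywhere.

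For the converse, the plan is to take an arbitrary $H$-expansion $\eta$ witnessing $H \minor G$ and upgrade it to an odd one by constructing a suitable witness colouring. The key observation is that a proper $2$-colouring $c_G$ of $G$ restricts to a proper $2$-colouring on each branch set $\eta(v)$, but edges $\eta(e) = xy$ joining distinct branch sets are automatically \emph{bichromatic} under $c_G$ (because $G$ is bipartite). We therefore need to selectively flip colours on some branch sets so that these connecting edges become monochromatic, while keeping each branch-set restriction a proper $2$-colouring.

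Fix a bipartition $(A,B)$ of $H$, available since $H$ is bipartite, and a proper $2$-colouring $c_G \colon V(G) \to \{0,1\}$. Define $c \colon V(H_\eta) \to \{0,1\}$ by
\begin{equation*}
    c(x) \coloneqq \begin{cases} c_G(x) & \text{if } x \in \eta(v) \text{ for some } v \in A, \\ 1 - c_G(x) & \text{if } x \in \eta(v) \text{ for some } v \in B. \end{cases}
\end{equation*}
For each $v \in V(H)$ the map $c \restricted \eta(v)$ is either $c_G \restricted \eta(v)$ or its pointwise complement, hence a proper $2$-colouring of $\eta(v)$. For each edge $e = vw \in E(H)$, say with $v \in A$ and $w \in B$, the edge $\eta(e)$ has endpoints $x \in \eta(v)$ and $y \in \eta(w)$ with $c_G(y) = 1 - c_G(x)$, and plugging in gives $c(x) = c_G(x) = 1 - c_G(y) = c(y)$, so $\eta(e)$ is monochromatic under $c$. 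Hence $\eta$ is an odd $H$-expansion witnessed by $c$.

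There is no real obstacle here; the whole content is the recolouring trick above, which exploits that $H$ being bipartite lets us flip the colours on one side of $H$'s bipartition to cancel the bichromaticity forced by $G$'s bipartition. The only subtlety is that the witness colouring need not be proper on all of $H_\eta$, only on each branch set individually, which is exactly what the definition of an odd expansion allows and what makes the argument go through.
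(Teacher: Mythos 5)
Your proof is correct and follows essentially the same recolouring argument as the paper: fix a proper $2$-colouring of $G$, fix a bipartition of $H$, and flip the induced colouring on every branch set $\eta(v)$ with $v$ on one side of $H$'s bipartition so that the images of the $H$-edges become monochromatic. The paper states this via a proper $2$-colouring $c_H$ of $H$ rather than a bipartition $(A,B)$ and leaves the verification as a routine check, but both are the same argument.
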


\begin{proof}
    Clearly if~$H$ is an \oddminor\ of~$G$, then~$H$ is a minor of~$G$. 

    For the other direction,
    let~${c_G \colon V(G) \to [2]}$ denote a proper $2$-colouring of~$G$ and let~${c_H \colon V(H) \to [2]}$ denote a proper $2$-colouring of~$H$. 
    We define~${c \colon V(H_\eta) \to [2]}$ by setting~${c(x) = c_G(v)}$ if~${x \in \eta(v)}$ for some~${v \in V(H)}$ with~${c_H(v) = 1}$ and~${c(x) = 2 - c_G(x)}$, otherwise.  
    By definition, it is straight forward to check that~$c$ witnesses that~$\eta$ is odd.  
\end{proof}

\begin{corollary}
    \label{cor:bipartite-subdivision}
    Let $H$ be a bipartite graph.
    If a graph $G$ contains a bipartite subdivision of $H$ as a subgraph, then it contains $H$ as an \oddminor.
\end{corollary}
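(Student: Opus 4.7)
The plan is to reduce the statement to \Cref{prop:bipartite-odd-minor-equiv} by passing through the intermediate bipartite graph~$H'$. Let~$H'$ denote the bipartite subdivision of~$H$ contained in~$G$ as a subgraph. The proof then has three short steps: verify that~$H'$ is bipartite, verify that~$H$ is a minor of~$H'$, and lift an odd $H$-expansion from~$H'$ to~$G$.

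First I would observe that~$H'$ is bipartite. Fix a proper $2$-colouring of~$H$. Each edge~$uv$ of~$H$ is replaced in~$H'$ by a path of odd length~${2k+1}$ (since it is subdivided an even number~$2k$ of times). Alternately colouring this path starting from~$u$ agrees with the fixed colouring of~$H$ at~$v$, and since every edge of~$H$ yields such an odd-length path, the colourings fit together consistently to give a proper $2$-colouring of~$H'$.

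Next, $H$ is obtained from~$H'$ by contracting each subdivision path back to a single edge, so~${H \minor H'}$. Since both~$H$ and~$H'$ are bipartite, \Cref{prop:bipartite-odd-minor-equiv} applies and yields~${H \odd H'}$. Let~$\eta$ be an odd $H$-expansion in~$H'$; since every branch set~${\eta(v)}$ is a subgraph of~$H'$ and~${H' \subseteq G}$, the same~$\eta$ is also an odd $H$-expansion in~$G$, witnessed by the same $2$-colouring. Hence~${H \odd G}$, as required.

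There is no serious obstacle here; the only point requiring any care is the first bullet, namely checking that subdividing each edge an \emph{even} number of times is exactly the condition which preserves bipartiteness. Once that is in hand, the corollary is immediate from the already-proved equivalence between minors and odd-minors for bipartite hosts.
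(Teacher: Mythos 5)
Your proof is correct and follows essentially the same route as the paper: both proofs pass through the bipartite intermediate graph~$H'$, apply \Cref{prop:bipartite-odd-minor-equiv} to obtain~${H \odd H'}$, and then transfer the odd expansion along the subgraph inclusion~${H' \subseteq G}$ (the paper phrases this last step via transitivity of~$\odd$ together with the trivial fact that a subgraph is an \oddminor, while you lift the expansion explicitly, but these amount to the same thing). Your extra check that subdividing each edge an even number of times preserves bipartiteness is a sound sanity check that the paper leaves implicit in its definition of ``bipartite subdivision.''
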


\begin{proof}
    Suppose $G$ contains the bipartite subdivision $H'$ of $H$ as a subgraph.
    Then clearly, it contains $H'$ as an \oddminor.
    Since $H$ and $H'$ are both bipartite and $H'$ contains $H$ as a minor, the assertion is directly implied by \cref{prop:bipartite-odd-minor-equiv} and the transitivity of the \oddminor\ relation.
\end{proof}

\begin{proposition}
    \label{prop:bipartite-odd-minor-closed}
    If a graph~$H$ is an \oddminor\ of a bipartite graph~$G$, then~$H$ is bipartite. 
\end{proposition}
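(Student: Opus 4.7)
The plan is to build a proper $2$-colouring of $H$ directly from the witness colouring of an odd $H$-expansion together with a proper $2$-colouring of $G$. Fix a proper $2$-colouring ${c_G \colon V(G) \to \{0,1\}}$ of the bipartite graph~$G$, an odd $H$-expansion~$\eta$ in~$G$, and a $2$-colouring~$c$ of~$H_\eta$ witnessing that~$\eta$ is odd; in particular, ${c \restricted \eta(v)}$ is proper for each~${v \in V(H)}$ and~$\eta(e)$ is monochromatic under~$c$ for each~${e \in E(H)}$.

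The key observation is that for any~${v \in V(H)}$, the subgraph~$\eta(v)$ is connected (it is a tree) and is a subgraph of the bipartite graph~$G$, so both~$c$ and~${c_G}$ restrict to proper $2$-colourings of~$\eta(v)$. Any two proper $2$-colourings of a connected bipartite graph agree up to a global swap, hence the value ${c(x) + c_G(x) \pmod 2}$ is the same for every~${x \in \eta(v)}$. I would thus define ${\chi \colon V(H) \to \{0,1\}}$ by ${\chi(v) \coloneqq c(x) + c_G(x) \pmod 2}$ for any (equivalently, every) ${x \in \eta(v)}$.

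It remains to check that~$\chi$ is a proper $2$-colouring of~$H$. For any edge~${vw \in E(H)}$, the edge~${\eta(vw) = xy}$ has ${x \in \eta(v)}$ and ${y \in \eta(w)}$. Since~$xy$ is an edge of~$G$, we have ${c_G(x) \neq c_G(y)}$, while monochromaticity of~$\eta(vw)$ gives ${c(x) = c(y)}$. Therefore ${\chi(v) = c(x) + c_G(x) \not\equiv c(y) + c_G(y) = \chi(w) \pmod 2}$, so~$\chi$ is proper and~$H$ is bipartite.

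There is no real obstacle: the statement is essentially bookkeeping about how the two $2$-colourings (the one inherited from~$G$ and the one witnessing oddness) interact on branch sets and on connecting edges. The only subtlety is noting that each branch set, being connected and a subgraph of a bipartite graph, admits a unique proper $2$-colouring up to swap, which is precisely what makes~$\chi$ well-defined.
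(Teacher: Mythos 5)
Your proof is correct and follows essentially the same route as the paper: comparing the witnessing colouring~$c$ with a proper $2$-colouring~$c_G$ of~$G$ on each branch set and reading off the induced colouring of~$H$. You are slightly more explicit than the paper in justifying well-definedness (that~$c$ and~$c_G$ agree or disagree uniformly on each connected branch set), which the paper's definition of~$c_H$ quietly takes for granted.
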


\begin{proof}
    Let~${c_G \colon V(G) \to [2]}$ be a proper $2$-colouring of~$G$, 
    let~$\eta$ be an odd $H$-expansion in~$G$, 
    and let~$c$ witness that~$\eta$ is odd. 
    We define~${c_H \colon V(H) \to [2]}$ by setting~${c_H(v) = 1}$ if~${c(x) = c_G(x)}$ for each~${x \in \eta(v)}$, and~${c_H(v) = 0}$, otherwise. 
    For each edge~${vw \in E(H)}$, since~$c(v) = c(w)$ and~$c_G$ is proper, we obtain that~${c_H(v) \neq c_H(w)}$. 
    Hence, $c_H$ is proper. 
\end{proof}

\begin{corollary}
    \label{cor:oddmonotone}
    The graph parameters~$\mathcal{B}\text{-}\mathsf{blind}\text{-}\mathsf{tw}$ and~$(\mathcal{B} \cup \mathcal{P})\text{-}\mathsf{blind}\text{-}\mathsf{tw}$ are $\odd$-monotone. 
\end{corollary}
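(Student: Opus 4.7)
My plan is to project a tree-decomposition of $G$ onto $H$ along an odd $H$-expansion and verify that the projection preserves the $\mathcal{A}$-blind-width. Fix an odd $H$-expansion $\eta$ in $G$ witnessed by a $2$-colouring $c$ of $H_\eta$, and let $(T,\beta)$ be a tree-decomposition of $G$ realising $k \coloneqq \mathcal{A}\text{-}\mathsf{blind}\text{-}\mathsf{tw}(G)$. Set $\beta'(t) \coloneqq \{v \in V(H) : V(\eta(v)) \cap \beta(t) \neq \emptyset\}$. Standard arguments --- each $\eta(v)$ is connected in $G$, and branch edges sit inside bags covering both endpoints --- show that $(T,\beta')$ is a tree-decomposition of $H$, and since distinct branch sets are vertex-disjoint we have $|\beta'(t)| \leq |\beta(t)|$ for every $t$. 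Consequently, whenever $|\beta'(t)| > k$ we also have $|\beta(t)| > k$, and by the choice of $(T,\beta)$ this forces $(G,\beta(t)) \in \mathcal{A}$. It thus suffices to prove the implication
\[
    (G,\beta(t)) \in \mathcal{A} \implies (H,\beta'(t)) \in \mathcal{A}
\]
at each node, for both $\mathcal{A} \in \{\mathcal{B}, \mathcal{B} \cup \mathcal{P}\}$.

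For $\mathcal{A} = \mathcal{P}$, I would realise the torso of $\beta'(t)$ in $H$ as a minor of the torso of $\beta(t)$ in $G$. The map $v \mapsto V(\eta(v)) \cap \beta(t)$ assigns to each $v \in \beta'(t)$ a non-empty, pairwise-disjoint subset of $\beta(t)$, and each such subset is connected inside the $G$-torso because any detour of $\eta(v)$ through a component $C$ of $G - \beta(t)$ is subsumed by the clique added on $N_G(V(C))$. A short argument shows that branch edges of $H[\beta'(t)]$ and the clique edges added to form the $H$-torso are all realised by edges of the $G$-torso, so this defines a valid minor model and planarity is inherited.

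For $\mathcal{A} = \mathcal{B}$, I would argue by contradiction. Suppose $H$ has an odd cycle $C = v_1 \cdots v_\ell v_1$ with $v_i, v_j \in \beta'(t)$ distinct, and choose witnesses $x_i \in V(\eta(v_i)) \cap \beta(t)$ and $x_j \in V(\eta(v_j)) \cap \beta(t)$. Lifting $C$ through $\eta$ by concatenating, for each $s$, a path inside the tree $\eta(v_s)$ between the two incident branch-edge endpoints with the branch edge $\eta(v_s v_{s+1})$, yields a simple cycle $\tilde{C}$ in $H_\eta \subseteq G$. The $2$-colouring $c$ makes every branch edge monochromatic and every branch tree properly $2$-coloured, so a telescoping parity computation shows $|\tilde{C}|$ has the same parity as $|E(C)|$ and is therefore odd. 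An odd cycle in $G$ meeting $\beta(t)$ in at least two vertices then contradicts $(G,\beta(t)) \in \mathcal{B}$, giving $(H,\beta'(t)) \in \mathcal{B}$. The case $\mathcal{A} = \mathcal{B} \cup \mathcal{P}$ reduces to these two by splitting on whether $(G,\beta(t)) \in \mathcal{B}$ or $(G,\beta(t)) \in \mathcal{P}$.

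The main obstacle is the final step of the $\mathcal{B}$ argument: ensuring the lifted cycle $\tilde{C}$ actually contains at least two vertices of $\beta(t)$. The unique tree-path inside a given $\eta(v_s)$ may bypass all the $\beta(t)$-witnesses of $\eta(v_s)$. To handle this I would exploit two sources of flexibility --- first, replacing $\eta(v_s)$ by a different spanning tree of $G[V(\eta(v_s))]$ whenever this puts a witness $x_s$ on the $z_s$-$y_s$ path, which does not change $H$ or the oddness of the expansion; second, invoking the tree-decomposition structure to argue that when no such rerouting exists, the branch-edge endpoints $z_s, y_s$ must lie in distinct components of $G - \beta(t)$, so that any $z_s$-$y_s$ path in $G[V(\eta(v_s))]$ is forced to cross $\beta(t)$. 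Combining these two facts with the $2$-colouring $c$ in a case analysis is the technical heart of the proof and produces the required two $\beta(t)$-vertices on $\tilde{C}$.
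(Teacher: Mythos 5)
Your overall plan---projecting a tree-decomposition of $G$ onto $H$ along an odd $H$-expansion---is natural, and the argument you sketch for the $\mathcal{P}$-case (realising the torso of $\beta'(t)$ in $H$ as a minor of the torso of $\beta(t)$ in $G$) is essentially sound. The problem lies in the implication you declare it suffices to prove, namely ${(G,\beta(t)) \in \mathcal{B} \implies (H,\beta'(t)) \in \mathcal{B}}$: this is false. For a concrete counterexample, let $G$ be a triangle on $\{a,b,c\}$ with pendant leaves $x$ at $a$ and $y$ at $c$, and take the tree-decomposition with bags $\{a,b,c\}$, $\{a,c,x\}$, $\{c,x,y\}$ (which realises $\mathcal{B}\text{-}\mathsf{blind}\text{-}\mathsf{tw}(G)=3$). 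Then $\beta(t)=\{c,x,y\}$ is globally bipartite: the only odd cycle $abc$ meets it in $c$ alone. Now let $H=K_3$ on $\{v_1,v_2,v_3\}$ with $\eta(v_1)=\{a,x\}$, $\eta(v_2)=\{b\}$, $\eta(v_3)=\{c,y\}$; colouring $a,b,c$ with one colour and $x,y$ with the other witnesses that $\eta$ is odd. Your projection gives $\beta'(t)=\{v_1,v_3\}$, and the odd cycle $v_1v_2v_3$ of $H$ meets it in two vertices, so ${(H,\beta'(t))\notin\mathcal{B}}$. The repairs you propose cannot fix this: the witness $x\in\eta(v_1)\cap\beta(t)$ has degree one in $G$ and therefore lies on no cycle whatsoever, so no choice of spanning tree for the branch set and no component-of-$G-\beta(t)$ dichotomy can force a lifted odd cycle through it. (In this example $\abs{\beta'(t)}<\abs{\beta(t)}$ saves the overall blind-width bound, but that is a coincidence your argument does not exploit, and as written it contains no step that would rule out the phenomenon for a large bag.)

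The paper's proof is very terse, but the three facts it cites point to a different, and gap-free, route: rather than pushing a fixed decomposition across $\eta$, one reduces to blocks. Observe (this is the content behind Observation~\ref{obs:blind-tw-blocks} and the remark following the definition of $\mathcal{B}$) that $(G,X)\in\mathcal{B}$ precisely when every non-bipartite block of $G$ contains at most one vertex of $X$, and consequently $\mathcal{A}\text{-}\mathsf{blind}\text{-}\mathsf{tw}(G)$ for $\mathcal{A}\in\{\mathcal{B},\mathcal{B}\cup\mathcal{P}\}$ is governed by the non-bipartite (for $\mathcal{B}\cup\mathcal{P}$: non-bipartite and non-planar) blocks of $G$ together with their treewidths. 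Then one argues that a $2$-connected odd-minor $K$ of $G$ is an odd-minor of a single block $B$ of $G$; if $K$ is non-bipartite then so is $B$ by Proposition~\ref{prop:bipartite-odd-minor-closed}, if $K$ is non-planar then so is $B$ because planarity is minor-closed, and $\mathsf{tw}(K)\leq\mathsf{tw}(B)$ because treewidth is minor-monotone. This never passes through the false pointwise implication on bags, which is the genuine missing idea in your proposal.
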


\begin{proof}
    This follows from the facts that~$\mathsf{tw}$ is $\leq_{\mathsf{minor}}$-monotone, that planarity is closed under minors, and from \Cref{prop:bipartite-odd-minor-closed}. 
\end{proof}

\section{Odd ears}
\label{sec:ears}

Let~$H$ be a subgraph of a graph~$G$. 
An \emph{$H$-ear (in~$G$)} is a path in~$G$ whose endvertices are in~$V(H)$, which is internally disjoint from~$H$ and edge-disjoint from~$H$. 
We call an~$H$-ear~$P$ \emph{odd} if~$H$ is bipartite and~${H \cup P}$ is non-bipartite. 

\begin{lemma}
    \label{lemma:oddear}
    Let~$H$ be a $2$-connected bipartite subgraph of a $2$-connected non-bipartite graph~$G$. 
    Then there is a odd $H$-ear in~$G$, which can be found in time~$\mathcal{O}(\abs{V(G)})$.  
\end{lemma}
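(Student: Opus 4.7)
My plan is to take an open ear decomposition of $G$ starting from $H$, locate the first ear whose addition destroys bipartiteness, and turn it into an odd $H$-ear by tacking on short paths inside the bipartite predecessor. Since $H$ and $G$ are both $2$\nobreakdash-connected, such a decomposition $H = G_0 \subsetneq G_1 \subsetneq \cdots \subsetneq G_k = G$ with $G_i = G_{i-1} \cup P_i$ and $P_i$ a $G_{i-1}$\nobreakdash-ear exists classically. Let $j$ be the smallest index for which $G_j$ is non-bipartite; then $G_{j-1}$ is $2$\nobreakdash-connected and bipartite, and its unique-up-to-swap proper $2$\nobreakdash-colouring $c'$ restricts to a proper $2$\nobreakdash-colouring $c$ of $H$. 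Writing $u,v$ for the endpoints of $P_j$, the choice of $j$ forces $|P_j| + c'(u) + c'(v) \equiv 1 \pmod{2}$.

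If $u,v \in V(H)$, then $P_j$ is itself an $H$\nobreakdash-ear, and combining it with any $u$\nobreakdash-$v$ path inside $H$ produces an odd cycle by the parity identity above. Otherwise, the strategy is to extend $P_j$ at each endpoint $x \in \{u,v\} \setminus V(H)$ by a path $Q_x$ in $G_{j-1}$ from $x$ to $V(H)$ that is internally disjoint from $V(H)$, arranged so that $Q_u, Q_v, P_j$ meet only at their shared endpoints and end at distinct vertices of $V(H)$. If exactly one endpoint, say $u$, lies outside $V(H)$, a shortest path from $u$ to $V(H)\setminus\{v\}$ in $G_{j-1} - v$ works: the graph $G_{j-1} - v$ is connected by $2$\nobreakdash-connectivity of $G_{j-1}$, and $V(H)\setminus\{v\}$ is nonempty since $|V(H)| \geq 3$. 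If both $u,v$ lie outside $V(H)$, I invoke Menger's theorem in $G_{j-1}$ to obtain two vertex-disjoint paths from $\{u,v\}$ to $V(H)$ with distinct endpoints in $V(H)$, ruling out any one-vertex separator by a short case analysis on $z \in \{u,v\}$, $z \in V(H)$, or $z$ outside both, which again uses $|V(H)| \geq 3$ together with the $2$\nobreakdash-connectivity of $G_{j-1}$. Truncating each path at its first hit to $V(H)$ delivers the required internal disjointness.

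Concatenating the obtained paths yields a path $P$ with distinct endpoints $w_1, w_2 \in V(H)$ whose interior vertices and edges avoid $H$, hence an $H$\nobreakdash-ear. A brief parity computation based on $|Q_x| \equiv c'(x) + c(w_x) \pmod{2}$ for any $x$\nobreakdash-$w_x$ path in the bipartite graph $G_{j-1}$ gives $|P| \equiv c(w_1) + c(w_2) + 1 \pmod{2}$, so that any $w_1$\nobreakdash-$w_2$ path in $H$ closes $P$ into an odd cycle. The main obstacle is the final case, where one needs both vertex-disjoint paths \emph{and} distinct $V(H)$-endpoints simultaneously; this is the sole place where $|V(H)| \geq 3$ is genuinely used beyond $2$\nobreakdash-connectivity. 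Algorithmically, building the ear decomposition, maintaining the bipartition as new ears are added, and performing the two Menger/BFS queries in $G_{j-1}$ all fit into a single linear-time sweep of $G$.
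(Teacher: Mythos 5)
Your proof is correct, but it takes a genuinely different route from the published argument. The paper's proof starts by finding an odd cycle $C$ in $G$ directly (by greedily extending a proper $2$-colouring of $H$), then splits into cases according to whether $\abs{V(C)\cap V(H)}\leq 1$ or $\geq 2$: in the first case it routes $C$ to $H$ via two vertex-disjoint $(V(C),V(H))$-paths and analyses the parity of the two resulting $H$-ears, and in the second it runs a parity count over the $H$-ears of $H\cup C$ versus the maximal subpaths in $H\cap C$. You instead build an open ear decomposition $H=G_0\subsetneq\cdots\subsetneq G_k=G$, locate the first parity-breaking ear $P_j$, exploit the uniqueness of the proper $2$-colouring of the $2$-connected bipartite $G_{j-1}$ to get the parity identity $\abs{P_j}+c'(u)+c'(v)\equiv 1$, and then route the endpoints of $P_j$ back into $V(H)$ through $G_{j-1}$ using either a shortest path in $G_{j-1}-v$ or Menger in $G_{j-1}$ (the $\abs{V(H)}\geq 3$ case analysis ruling out a one-vertex separator is correct, and $\abs{V(H)}\geq 3$ does hold since $H$ is $2$-connected). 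Your final parity bookkeeping $\abs{P}\equiv c(w_1)+c(w_2)+1$ is sound because $G_{j-1}$ is bipartite and $2$-connected, so path lengths between fixed vertices have fixed parity.

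What the paper's method buys is that it avoids the explicit ear decomposition machinery and the Menger argument entirely: the odd cycle is a single BFS away and the routing to $H$ is one application of $2$-connectivity, so the case analysis is shorter. Your method buys a cleaner conceptual picture (the first parity-breaking ear is a canonical object) and localises the parity break before any routing is done; it also makes the parity calculation mechanical rather than a counting argument over a cycle. Both run into the same mild sloppiness about the claimed $\mathcal{O}(\abs{V(G)})$ running time, which should more honestly be $\mathcal{O}(\abs{V(G)}+\abs{E(G)})$: an ear decomposition and a BFS-style odd-cycle search both touch edges. It is perhaps worth noting that the paper's source contains a commented-out section developing precisely the ear-decomposition framework you use, deriving the ear-decomposition statement as a corollary of Lemma~\ref{lemma:oddear}; you have run that implication in the opposite direction.
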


\begin{proof}
    Let~$c_H$ be a proper $2$-colouring of~$H$ and let~$C$ be an odd cycle in~$G$, which can be found in linear time by greedily trying to extend the proper $2$-colouring of~$H$ to a proper $2$-colouring of~$G$. 
    Suppose~${\abs{V(C) \cap V(H)} \leq 1}$. 
    Since~$G$ is $2$-connected, we find two (possibly trivial) vertex-disjoint $(V(C),V(H)$-paths~$Q_1$ and~$Q_2$ in linear time. 
    In particular, if ${\abs{V(C) \cap V(H)} = 1}$, one of~$Q_1$ or~$Q_2$ is trivial. 
    Note that~${Q_1 \cup C \cup Q_2}$ contains exactly two $H$-ears~$P_1$ and~$P_2$. 
    Since \[{\abs{E(P_1)} + \abs{E(P_2)} = \abs{E(C)} + 2 (\abs{E(Q_1)} + \abs{E(Q_2)})},\] we obtain that~$P_1$ and~$P_2$ have different parities. 
    And since both~$P_1$ and~$P_2$ have the same set of endvertices in~$V(H)$, one of them is an odd $H$-ear. 
    
    So we may assume that~${\abs{V(C) \cap V(H)} \geq 2}$. 
    Let~$\mathcal{P}$ denote the set of~$H$-ears in~${H \cup C}$, and let~$\mathcal{Q}$ denote the set of maximal paths in~${H \cap C}$. 
    Observe that~$C$ is the edge-disjoint union of all paths in~$\mathcal{P} \cup \mathcal{Q}$. 
    For~${i \in [2]}$, let~${X_i \coloneqq V(\bigcup \mathcal{P}) \cap c_H^{-1}(i)}$ denote the set of all vertices~$v$ in an $H$-ear in~$\mathcal{P}$ with~${c_H(v) = i}$. 
    Without loss of generality, we may assume that~${\abs{X_1} \geq 1}$. 
    Assume for a contradiction that no $H$-ear in~$\mathcal{P}$ is odd. 
    Then every~${(X_1,X_2)}$-path in~$\mathcal{P} \cup \mathcal{Q}$ is odd and every ${(X_i,X_i)}$-path in~$\mathcal{P} \cup \mathcal{Q}$ for any~${i \in [2]}$ is even. 
    Now it is easy to observe that that the number of~${(X_1,X_2)}$-path in~$\mathcal{P} \cup \mathcal{Q}$ is even, contradicting that~$C$ is odd.  
\end{proof}

\section{Excluding a singly parity-breaking grid as an \oddminor}
\label{sec:singleodd}

In this section we prove \cref{thm:singleoddstructure}. 
Recall the definition of singly parity-breaking grids, see \Cref{figure:singleoddintro}.

\begin{definition}[Singly parity-breaking grids]
    \label{def:singleodd}
    The \emph{singly parity-breaking grid}~$\mathscr{S}_k$ of \emph{order~$k$} is the graph obtained from the ${(2k\times 2k)}$ grid by adding the edge~${\{(k,k),(k+1,k+1)\}}$.
    We denote the parametric singly parity-breaking grid by~${\mathscr{S} \coloneqq \langle \mathscr{S}_k \rangle_{k \in \mathbb{Z}^{+}}}$. 
\end{definition}

\begin{lemma}
    \label{lem:B-tw-singly-grid}
    It holds that ${\mathcal{B}\text{-}\mathsf{blind}\text{-}\mathsf{tw}(\mathscr{U}_k) = \mathsf{tw}(\mathcal{U}_k) + 1}$ for every integer~${k \geq 1}$. 
\end{lemma}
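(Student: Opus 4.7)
The plan is to prove both bounds via a sharp structural observation. The upper bound $\mathcal{B}\text{-}\mathsf{blind}\text{-}\mathsf{tw}(\mathscr{S}_k) \leq \mathsf{tw}(\mathscr{S}_k) + 1$ is trivial: any minimum-width tree decomposition of $\mathscr{S}_k$ has maximum bag size $\mathsf{tw}(\mathscr{S}_k) + 1$, and the $\mathcal{B}$-blind-width of any tree decomposition is bounded above by its maximum bag size. For the lower bound I aim to establish the following. \textbf{Key Claim.} For every integer $k \geq 1$, every globally bipartite set $X \subseteq V(\mathscr{S}_k)$ satisfies $|X| \leq 2$.

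Assuming the Key Claim, the lower bound follows at once. Suppose for contradiction that some tree decomposition $(T,\beta)$ of $\mathscr{S}_k$ had $\mathcal{B}$-blind-width at most $\mathsf{tw}(\mathscr{S}_k)$. Then every bag is either non-bipartite and hence of size at most $\mathsf{tw}(\mathscr{S}_k)$ by hypothesis, or globally bipartite and hence of size at most $2 \leq \mathsf{tw}(\mathscr{S}_k)$ (using $\mathsf{tw}(\mathscr{S}_k) \geq 2$, since $\mathscr{S}_k$ contains the triangle on $(k,k),(k,k+1),(k+1,k+1)$ formed using the parity-breaking edge). This would yield a tree decomposition of $\mathscr{S}_k$ of width at most $\mathsf{tw}(\mathscr{S}_k) - 1$, contradicting the definition of treewidth.

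To prove the Key Claim, I exploit that every odd cycle of $\mathscr{S}_k$ uses the unique parity-breaking edge $e = \{(k,k),(k+1,k+1)\}$, because $\mathscr{S}_k - e = \mathscr{G}_{2k}$ is bipartite; hence every odd cycle passes through both $(k,k)$ and $(k+1,k+1)$. Suppose $|X| \geq 3$. If $X$ contains both endpoints of $e$, any odd cycle already contains two vertices of $X$, a contradiction. Otherwise, by symmetry, assume $(k,k) \notin X$. If $(k+1,k+1) \in X$ I pick any further $u \in X$ and (using $2$-connectedness of $\mathscr{G}_{2k}$) find a simple $(k,k)$--$(k+1,k+1)$ path in the grid through $u$; concatenating with $e$ produces an odd cycle with two vertices $\{u,(k+1,k+1)\} \subseteq X$. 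If $(k+1,k+1) \notin X$ as well, I pick two distinct witnesses $u, v \in X$ and find a simple $(k,k)$--$(k+1,k+1)$ path in the grid through both; concatenating with $e$ again produces an odd cycle with two vertices of $X$. The small case $k=1$, where $\mathscr{S}_1 \cong K_4 - e$ has only four vertices, is settled by direct inspection and shows that the bound $|X| \leq 2$ is tight (achieved e.g.\ by $X = \{(1,2),(2,1)\}$).

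The main obstacle is the two-vertex routing step: showing that for $k \geq 2$, any two distinct vertices $u, v \in V(\mathscr{G}_{2k}) \setminus \{(k,k),(k+1,k+1)\}$ lie on a common simple $(k,k)$--$(k+1,k+1)$ path in the grid. I would verify this by an explicit construction, splitting into cases according to the positions of $u$ and $v$ (boundary versus interior, same quadrant or not, whether either is a neighbour of one of the two central vertices, etc.)\ and snaking around the grid appropriately in each case. Each individual case is elementary given the high connectivity of the grid, but enumerating the configurations is the most technical portion of the proof.
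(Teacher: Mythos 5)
Your proof is correct and follows essentially the same strategy as the paper: the upper bound is immediate, and the lower bound is established by showing that a globally bipartite subset of $V(\mathscr{S}_k)$ cannot be large, so any tree-decomposition necessarily contains a large bag that is not globally bipartite, forcing the $\mathcal{B}$-blind-width to equal the maximum bag size (i.e., $\mathsf{tw}(\mathscr{S}_k)+1$). The one genuine difference is a small but real one in your favour: the paper asserts that \emph{every} pair $u,v \in V(\mathscr{S}_k)$ lies on a common odd cycle (hence globally bipartite sets have size at most $1$), but this is false when $k=1$ --- in $\mathscr{S}_1 \cong K_4 - e$ the only odd cycles are the two triangles through the diagonal edge, neither of which contains both $(1,2)$ and $(2,1)$, so $\{(1,2),(2,1)\}$ is globally bipartite. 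Your weaker Key Claim ``$|X|\leq 2$'' absorbs this exceptional case and still closes the argument because $\mathsf{tw}(\mathscr{S}_k) \geq 2$; you also explicitly observe that the bound $2$ is attained at $k=1$. So your proposal is in fact slightly more careful than the paper's own proof. Both you and the paper leave the underlying connectivity/routing fact (for $k\geq 2$, any two prescribed grid vertices lie on a common $(k,k)$--$(k+1,k+1)$ path) as an asserted but unproved elementary grid lemma, and you are candid about deferring the case enumeration; that is an acceptable level of detail matching the paper.
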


\begin{proof}
    Fix an integer~${k \geq 1}$. 
    Clearly, ${\mathcal{B}\text{-}\mathsf{blind}\text{-}\mathsf{tw}(\mathscr{S}_k) \leq \mathsf{tw}(\mathscr{S}_k) + 1}$. 
    Suppose for a contradiction that ${k' \coloneqq \mathcal{B}\text{-}\mathsf{blind}\text{-}\mathsf{tw}(\mathscr{S}_k) \leq \mathsf{tw}(\mathscr{S}_k)}$. 
    Let~${\mathcal{T} = (T,\beta)}$ be a tree-decomposition of~$\mathscr{S}_k$ of $\mathcal{B}$-blind-width~${k'}$. 
    By assumption it holds that~${k' \leq \mathsf{tw}(\mathscr{S}_k)}$, there is a node~${t \in V(T)}$ such that~${\abs{\beta(t)} \geq k}$ and~${(G,\beta(t)) \in \mathcal{B}}$.

    Now notice that for every choice of vertices $u,v\in V(\mathscr{S}_k)$ there exists an odd cycle $C_{uv}$ in $\mathscr{S}_k$ containing both $u$ and $v$.
    Since $\abs{\beta(t)}\geq k$ we may choose $u,v\in \beta(t)$.
    The existence of the cycle $C_{uv}$ now certifies that $\beta(t)$ cannot be globally bipartite, a contradiction.
\end{proof}

\begin{corollary}
    \label{cor:bpb-f(B-tw)}
    It holds that~${\mathsf{big}_{\mathscr{S}}(G) \leq (\mathcal{B}\text{-}\mathsf{blind}\text{-}\mathsf{tw}(G) - 1)/2}$ for every graph~$G$.
\end{corollary}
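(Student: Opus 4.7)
The plan is to mirror the proof of Corollary~\ref{cor:pb-f(P-tw)} almost verbatim, substituting $\mathscr{S}$ for $\mathscr{U}$ and $\mathcal{B}$ for $\mathcal{P}$. All the ingredients are already in place, so this should be a short computation chained together from earlier results.

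First, I would fix a graph $G$ with $\mathsf{big}_{\mathscr{S}}(G) = k$, so by definition $\mathscr{S}_k \odd G$. The key monotonicity step is to invoke Corollary~\ref{cor:oddmonotone}, which asserts that $\mathcal{B}\text{-}\mathsf{blind}\text{-}\mathsf{tw}$ is $\odd$-monotone, yielding
\[
    \mathcal{B}\text{-}\mathsf{blind}\text{-}\mathsf{tw}(G) \;\geq\; \mathcal{B}\text{-}\mathsf{blind}\text{-}\mathsf{tw}(\mathscr{S}_k).
\]
Next, I would apply Lemma~\ref{lem:B-tw-singly-grid} (reading the statement with the evident typo corrected, as the proof itself makes clear) to rewrite the right-hand side as $\mathsf{tw}(\mathscr{S}_k) + 1$. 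Since $\mathscr{S}_k$ contains the $(2k\times 2k)$-grid $\mathscr{G}_{2k}$ as a subgraph, and treewidth is monotone under subgraphs, we have $\mathsf{tw}(\mathscr{S}_k) \geq \mathsf{tw}(\mathscr{G}_{2k}) = 2k$.

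Chaining everything together gives
\[
    \mathcal{B}\text{-}\mathsf{blind}\text{-}\mathsf{tw}(G) \;\geq\; \mathsf{tw}(\mathscr{S}_k) + 1 \;\geq\; 2k + 1 \;=\; 2\,\mathsf{big}_{\mathscr{S}}(G) + 1,
\]
which rearranges to the desired inequality $\mathsf{big}_{\mathscr{S}}(G) \leq (\mathcal{B}\text{-}\mathsf{blind}\text{-}\mathsf{tw}(G) - 1)/2$. There is essentially no obstacle here: the only subtle point is ensuring that all the monotonicity facts line up (in particular, that Corollary~\ref{cor:oddmonotone} is used rather than a naive minor-monotonicity argument, because the containment relation of interest is the \oddminor{} relation). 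The treewidth lower bound $\mathsf{tw}(\mathscr{G}_{2k}) = 2k$ is a standard fact and needs no further justification.
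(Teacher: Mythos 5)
Your proof is correct and is essentially identical to the paper's argument: both chain $\odd$-monotonicity of $\mathcal{B}\text{-}\mathsf{blind}\text{-}\mathsf{tw}$ (Corollary~\ref{cor:oddmonotone}), the evaluation $\mathcal{B}\text{-}\mathsf{blind}\text{-}\mathsf{tw}(\mathscr{S}_k) = \mathsf{tw}(\mathscr{S}_k)+1$ from Lemma~\ref{lem:B-tw-singly-grid}, and the grid lower bound $\mathsf{tw}(\mathscr{S}_k) \geq \mathsf{tw}(\mathscr{G}_{2k}) = 2k$. You also correctly spotted that Lemma~\ref{lem:B-tw-singly-grid}'s statement has a typo (it should read $\mathscr{S}_k$, not $\mathscr{U}_k$), which the paper's own proof of that lemma confirms.
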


\begin{proof}
    Let~$G$ be a graph with~${\mathsf{big}_{\mathscr{S}}(G) = k}$ for some positive integer~$k$. 
    With \Cref{cor:oddmonotone} and \Cref{lem:B-tw-singly-grid}, we calculate 
    \[
        \mathcal{B}\text{-}\mathsf{blind}\text{-}\mathsf{tw}(G) 
        \geq \mathcal{B}\text{-}\mathsf{blind}\text{-}\mathsf{tw}(\mathscr{S}_k) 
        = \mathsf{tw}(\mathscr{S}_k)+1 
        \geq \mathsf{tw}(\mathscr{G}_{2k})+1 
        = 2k + 1
        = 2\mathsf{big}_{\mathscr{S}}(G) + 1.\qedhere
    \]
\end{proof}

Our first goal is to show that a non-bipartite $2$-connected graph of large treewidth contains a singly parity-breaking grid as an odd-minor. 
To do this, we need the following two lemmas, the first of which allows us to turn a wall ``inside out''. 

\begin{lemma}
    \label{lem:InsideOutWall}
    There is a computable function~${f_{\ref{lem:InsideOutWall}} \colon \mathbb{N} \to \mathbb{N}}$ satisfying the following. 
    For every positive integer~$k$, every $f_{\ref{lem:InsideOutWall}}(k)$-wall~$W$ 
    contains a $2k$-wall~$W'$ as a subgraph such that 
    \begin{itemize}
        \item the perimeter of~$W$ is equal to the central brick of~$W'$, and 
        \item the four corners of~$W$ are degree~$2$ vertices in the central brick of~$W'$.
    \end{itemize}
    Moreover, if~$W$ is clean, then~$W'$ is clean as well, and vertices in~$W'$ corresponding to the four corners of~$W$ are in the same colour class as the branch vertices of~$W'$ for every proper $2$-colouring of~$W$. 
\end{lemma}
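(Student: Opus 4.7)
The strategy is to construct $W'$ inside $W$ by using the perimeter $P$ of $W$ as the central brick and exploiting the grid-like structure of $W$'s interior to build the $2k-1$ surrounding layers of hexagonal bricks; the construction is purely combinatorial and produces a subgraph $W' \subseteq W$ that is isomorphic to a subdivision of $\mathscr{W}_{2k}$ under an isomorphism sending the central brick of $\mathscr{W}_{2k}$ to the cycle $P$. This is consistent with $W' \subseteq W$ even though $P$ is the outer face of $W$, because walls are only $2$-connected and therefore admit several planar embeddings: the ``wall embedding'' of $W'$ we aim at places $P$ as an interior face, even though the embedding inherited from $W$ places it on the outside. Choose $f_{\ref{lem:InsideOutWall}}(k) = \Theta(k)$ large enough for the routing below.

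First select six vertices $b_1, \dots, b_6$ on $P$ in cyclic order so that each of the four corners of $W$ lies strictly inside one of the six open arcs of $P$ determined by consecutive $b_i$'s. These six vertices will be the branch vertices of $W'$ that lie on the central brick, and the six subdivided arcs of $P$ between them will be the six sides of this brick; consequently the four corners of $W$ appear as degree-$2$ subdivision vertices of the central brick of $W'$, as required. From each $b_i$, route a single path into the interior of $W$ along a row or column of $W$; close these six spokes off by cycles drawn from $W$'s interior to form the first ring of six hexagonal bricks meeting along $P$. Iterate: the $j$-th ring consists of a further, concentric family of cycles and spokes drawn from nested subwalls of $W$. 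Because $W$ is a $\Theta(k)$-wall, its interior contains enough nested subwall structure to realise all $2k - 1$ rings, and the union of $P$, the spokes, and the rings is then readily checked to be isomorphic to a subdivision of $\mathscr{W}_{2k}$ with $P$ as central brick.

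For the moreover part, assume $W$ is clean with proper $2$-colouring $c$ placing all branch vertices and corners of $W$ in a single colour class $A$. Since colours alternate along $P$ and along the rows and columns of $W$, one can select the $b_i$ above from $A$, and similarly select the branch vertices of each outer ring in $A$. The restriction of $c$ to $W'$ is then a proper $2$-colouring in which all branch vertices of $W'$ belong to $A$; since the four corners of $W$ are also in $A$ by hypothesis, they lie in the same colour class as the branch vertices of $W'$, witnessing cleanness of $W'$. The main difficulty is the detailed routing in the second paragraph: one must explicitly identify spokes and cycles inside $W$ that assemble into an isomorphic copy of a subdivision of $\mathscr{W}_{2k}$, and control how many rows and columns of $W$ are consumed per ring; this is a somewhat intricate but routine grid-routing argument, and it is precisely what forces the linear dependence $f_{\ref{lem:InsideOutWall}}(k) = \Theta(k)$.
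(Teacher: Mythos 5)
Your proposal matches the paper's intended argument: the paper explicitly declines to give a formal proof of this lemma and instead refers to a figure showing how to turn a $6$\nobreakdash-wall within a $14$\nobreakdash-wall inside out, which is exactly the concentric-ring routing you describe, with the same linear dependence $f_{\ref{lem:InsideOutWall}}(k)=\Theta(k)$. One small point you leave implicit in the cleanness argument: you arrange the branch vertices of $W'$ to lie in the colour class $A$, but for $W'$ to be clean one must also ensure the four \emph{corners} of $W'$ land in $A$; this is handled by the same ``route so that every subdivided edge of the outer wall structure has even length'' principle the paper hints at in the figure caption.
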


We do not provide a formal proof for this lemma but refer the reader to \Cref{figure:insideout}.
In the figure we provide a scheme for finding a subgraph within a $14$-wall that we can turn inside out to obtain a $6$-wall that fixes the positions of the corners as desired. 

\begin{figure}[ht]
    \begin{center}
        \includegraphics[scale=0.6]{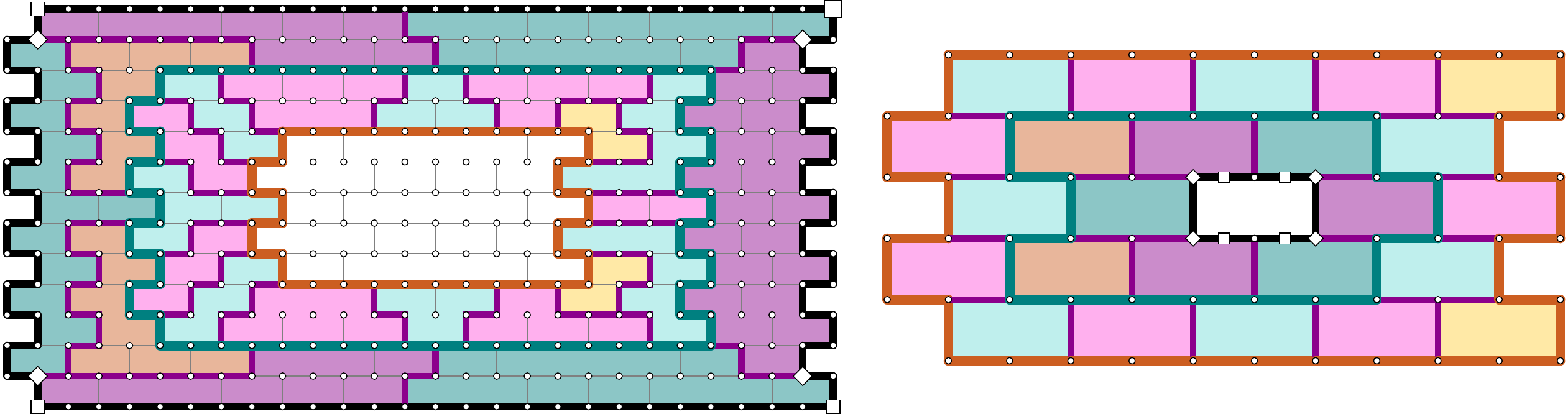}
    \end{center}
    \caption{The process of turning a a $6$-wall within a $14$-wall inside out. If every drawn edge stands for a path of even length, cleanness is preserved. }
    \label{figure:insideout}
\end{figure}

The second auxiliary lemma constructs a singly parity-breaking grid in a large lean wall with an odd ear attached to its perimeter. 

\begin{lemma}
    \label{lem:cleanAndOddHandle2singlyoddminor}
    Let~$k$ be a positive integer, let~$W$ be a clean $(f_{\ref{lem:InsideOutWall}}(k)+2)$-wall and let~$P$ be an odd $W$-ear in~${W \cup P}$ whose endvertices are in the perimeter of~$W$. 
    Then~${W \cup P}$ contains~$\mathscr{S}_k$ as an \oddminor. 
\end{lemma}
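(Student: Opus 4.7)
The plan is to apply the Inside-Out Lemma to relocate the perimeter of $W$ (where $P$ has its endpoints) into the centre, and then to combine the resulting $(2k \times 2k)$-grid minor with the odd ear $P$ to exhibit $\mathscr{S}_k$ as an odd-minor of $W \cup P$. Concretely, I would first apply \cref{lem:InsideOutWall} to $W$, obtaining a clean $2k$-wall $W' \subseteq W$ whose central brick is precisely the perimeter of $W$ and whose four corners of $W$ sit as the degree-$2$ vertices of that brick. Because $P$ is internally disjoint from $V(W) \supseteq V(W')$, its endpoints $u, v$ lie on the central brick of $W'$ and $P$ is an ear of $W'$ as well.

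Next I would realise the natural $(2k \times 2k)$-grid minor $\eta_0$ of $W'$ in a planar way so that the central brick of $W'$ contracts exactly onto the central $4$-face, with corners $(k,k), (k,k+1), (k+1,k+1), (k+1,k)$. Using the additional slack provided by the ``$+2$'' in the hypothesis and the flexibility in how the six structural vertices of the central brick are distributed among the four branch sets of that central face, I would choose $\eta_0$ so that $u \in \eta_0((k,k))$ and $v \in \eta_0((k+1,k+1))$. I then enlarge $\eta_0$ to an $\mathscr{S}_k$-expansion $\eta$ in $W' \cup P$: split $P$ at an edge $e = w_1 w_2$ into a subpath $P_1$ from $u$ to $w_1$ and a subpath $P_2$ from $w_2$ to $v$, and set
\[
    \eta((k,k)) \coloneqq \eta_0((k,k)) \cup P_1, \qquad \eta((k+1,k+1)) \coloneqq \eta_0((k+1,k+1)) \cup P_2,
\]
keeping all other branch sets and grid-edge images as in $\eta_0$, and putting $\eta(e^{\star}) \coloneqq e$ for the parity-break edge $e^{\star} = \{(k,k),(k+1,k+1)\}$.

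Finally I would verify that $\eta$ is odd. Flipping each branch set $\eta_0(x)$ according to a proper $2$-colouring $\phi$ of $\mathscr{G}_{2k}$ turns the canonical proper $2$-colouring of the bipartite wall $W'$ into a $2$-colouring $c''$ of the image of $\eta_0$ in which every lifted grid edge is monochromatic, by the standard bipartite-in-bipartite argument underlying \cref{prop:bipartite-odd-minor-equiv}. Extending $c''$ properly along $P_1$ and $P_2$ and using that $(k,k)$ and $(k+1,k+1)$ lie in the same colour class of $\mathscr{G}_{2k}$, a short parity computation --- based on the fact that, since $P$ is odd, the length of $P$ together with the $W$-colours of $u$ and $v$ violates the proper-extension condition --- shows that $e$ may be placed anywhere along $P$ and is always monochromatic under $c''$, proving that $\eta$ witnesses $\mathscr{S}_k \odd (W \cup P)$. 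The main obstacle in this plan is the routing step in the second paragraph: ensuring that the two essentially arbitrary endpoints of $P$ on the central brick are assigned to the diagonally opposite corners $(k,k)$ and $(k+1,k+1)$ of the central face rather than to adjacent corners, which is exactly where the two-layer buffer provided by the ``$+2$'' in the hypothesis on $W$ is used.
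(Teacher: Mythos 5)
Your parity verification at the end is essentially sound, and the high-level idea of using the inside-out lemma to bring the perimeter of $W$ to the centre and then grafting the ear as the diagonal edge is the right one. The problem is exactly the routing step that you flag as the main obstacle: you cannot in general place the two \emph{given} endpoints $u,v$ of $P$ into the two diagonally opposite branch sets $\eta_0((k,k))$ and $\eta_0((k+1,k+1))$. Concretely, if $u$ and $v$ happen to be adjacent on the perimeter of $W$, then in any contraction of the central brick onto the four vertices of the central face they land in the same or in two adjacent branch sets, and there is no way to separate them by a non-empty arc on both sides; yet your odd-monochromaticity computation genuinely relies on $u$ and $v$ landing on same-coloured grid vertices — which is precisely what being diagonal means, since $\mathscr{S}_k$'s extra edge is $\{(k,k),(k+1,k+1)\}$. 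The ``$+2$'' in the hypothesis does not fix this: the slack it provides is orthogonal to where $u,v$ happen to lie on the perimeter.

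The paper avoids the issue by not using $P$ itself as the ear that realises the diagonal edge. Instead, it first discards the outer perimeter $C$ of $W$, obtaining an $f_{\ref{lem:InsideOutWall}}(k)$-subwall $W'$ with perimeter $C'$, and only then applies \cref{lem:InsideOutWall} to $W'$ to get a clean $2k$-wall $W''\subseteq W'$ whose central brick is $C'$. Now the discarded ring $C$ together with the given odd ear $P$ forms a $2$-connected graph sitting entirely outside $V(W'')$ and wrapping around the new central brick $C'$. That gives the freedom to \emph{construct a fresh} odd $W''$-ear $P'$ between any two prescribed vertices of $C'$: route from one of them out to $C$, around through $C\cup P$, and back, and because $C$ is an even cycle with the parity-breaking path $P$ hanging off it, one can decide whether or not to traverse $P$ and thereby control the parity of $P'$. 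Choosing the prescribed endpoints to be a diagonally opposite pair on the central brick, the rest of your argument goes through unchanged. The missing ingredient in your plan, then, is to keep the outermost ring of $W$ as a routing buffer and build a new odd ear to the diagonal, rather than trying to steer the endpoints of the given ear.
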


\begin{proof}
    Let~$C$ denote the perimeter of~$W$,
    let~$W'$ be the subwall of $W$ of order~$f_{\ref{lem:InsideOutWall}}(k)$ obtained by first deleting~$C'$ and then successively deleting vertices of degree~$1$, and let~$C'$ denote the perimeter of~$W'$. 
    By \Cref{lem:InsideOutWall} there is a clean $2k$-wall ${W'' \subseteq W'}$ whose central brick is bounded by~$C_1$.
    Now since~${C \cup P}$ is $2$-connected, there an odd $W''$-ear between any two corners of~$W''$. 
    Let~$P'$ denote an odd $W''$-ear between the vertices corresponding to the vertices~${(2k,k)}$ and~${(2k+2,k+1)}$ of the elementary $2k$-wall. 
    By contracting every path corresponding to every other edge in each row of~$W''$, we obtain the $(2k,2k)$-grid~$H$ as an \oddminor\ of~$W''$ by \Cref{prop:bipartite-odd-minor-equiv} as witnessed by some odd $H$-expansion~$\eta'$. 
    Define an~$\mathscr{S}_k$-expansion~$\eta$ in~${W \cup P}$ as follows. 
    Let~$P''$ denote the subpath of~$P'$ obtained by deleting the vertex corresponding to~${(2k+2,k+1)}$ and let~$e$ denote unique edge in~$E(P') \setminus E(P'')$. 
    We set~${\eta((k,k)) \coloneqq (\eta'((k,k)) \cup P'')}$, ${\eta(\{ (k,k), (k+1,k_1) \}) \coloneqq e}$, and~${\eta(x) \coloneqq \eta'(x)}$, otherwise. 
    Let~$c'$ be the $2$-colouring of~${H_\eta'}$ that witnesses that~$\eta'$ is odd, and let~$c$ be the unique $2$-colouring of~${(\mathscr{S}_k)_\eta}$ for which~${c \restricted \eta((k,k))}$ is a proper $2$-colouring. 
    Since~$P''$ is even, both of its endvertices have the same colour, say colour~$1$. 
    Since~$W''$ is clean, each path between the endvertices of~$P'$ in~$W''$ is even. 
    Since $\eta'$ is odd, each such path contains an even amount of monochromatic edges as well as an even amount of bichromatic edges. 
    Hence, $\eta(e)$ is monochromatic, and~$c$ witnesses that~$\eta$ is odd, as desired. 
\end{proof}

Using these two lemmas, we prove the following important step. 

\begin{lemma}
    \label{lem:find-spb}
    There exists computable functions~${f_{\ref{lem:find-spb}}, g_{\ref{lem:find-spb}}}$ and an algorithm that, given a positive integer~$k$ and a non-bipartite $2$\nobreakdash-connected graph~$G$ and as inputs, 
    in time \linebreak ${\mathcal{O}(g_{\ref{lem:find-spb}}(k)\abs{V(G)}\log\abs{V(G)})}$,
    either finds an odd $\mathscr{S}_k$-expansion in~$G$, 
    or concludes correctly that ${\mathsf{tw}(G) < f_{\ref{lem:find-spb}}(\mathsf{big}_{\mathscr{S}}(G))}$. 
\end{lemma}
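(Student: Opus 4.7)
\emph{Proof plan.}
The strategy is to stitch together the tools developed so far: extract a large clean wall from~$G$ via Corollary~\ref{cor:cleanwall}, use $2$-connectivity and non-bipartiteness together with Lemma~\ref{lemma:oddear} to attach an odd ear, and then apply Lemma~\ref{lem:cleanAndOddHandle2singlyoddminor}. Set $n \coloneqq f_{\ref{lem:InsideOutWall}}(k)+2$, which is the order of the clean wall required by Lemma~\ref{lem:cleanAndOddHandle2singlyoddminor}, and fix $N \gg n$ (an explicit choice like $N \coloneqq 4n$ will suffice for the routing step below). Invoke Corollary~\ref{cor:cleanwall} on~$G$ with parameter~$N$. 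In time $\mathcal{O}(g_{\ref{cor:cleanwall}}(N)\abs{V(G)})$ this either returns a clean $N$-wall $W \subseteq G$, or certifies that $\mathsf{tw}(G) \leq f_{\ref{cor:cleanwall}}(N)$. In the latter case we output the treewidth bound; declaring $f_{\ref{lem:find-spb}}$ to grow fast enough in~$k$ ensures $f_{\ref{cor:cleanwall}}(N) < f_{\ref{lem:find-spb}}(k-1)$, and since the algorithm fails to produce an $\mathscr{S}_k$-expansion we must have $\mathsf{big}_{\mathscr{S}}(G) \leq k-1$, so the declared bound is at most $f_{\ref{lem:find-spb}}(\mathsf{big}_{\mathscr{S}}(G))$.

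Suppose instead a clean $N$-wall $W$ is returned. Since $W$ is bipartite and $2$-connected and $G$ is non-bipartite and $2$-connected, Lemma~\ref{lemma:oddear} returns, in time $\mathcal{O}(\abs{V(G)})$, an odd $W$-ear $P \subseteq G$ with endvertices $u, v \in V(W)$. It remains to produce a clean subwall $W' \subseteq W$ of order~$n$ together with an odd $W'$-ear whose endvertices lie on the perimeter of~$W'$, so that Lemma~\ref{lem:cleanAndOddHandle2singlyoddminor} applies. Choose $W'$ to occupy a corner region of~$W$ such that $u, v \in V(W) \setminus V(W')$; since $N \geq 4n$ and the endvertices $u, v$ occupy a bounded number of rows and columns of~$W$, such a choice of $W'$ always exists, and $W'$ inherits cleanness from~$W$. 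Let $c$ be a proper $2$-colouring of~$W$. For a suitable pair of perimeter vertices $u', v'$ of~$W'$, route $u$ to $u'$ by a path $R_u$ and $v$ to $v'$ by a path $R_v$, both lying inside $W \setminus V(\mathrm{int}(W'))$ and internally disjoint from~$P$ and from each other. The concatenation $P' \coloneqq R_u \cup P \cup R_v$ is then a $W'$-ear. Because~$W$ is bipartite, the parity of~$R_u$ is determined by whether $c(u) = c(u')$ or $c(u) \neq c(u')$, and analogously for~$R_v$. Choosing the colour classes of $u', v'$ on the perimeter of~$W'$ so that $R_u$ and $R_v$ have the same parity makes $P'$ odd, since~$P$ is odd. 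Applying Lemma~\ref{lem:cleanAndOddHandle2singlyoddminor} to the pair $(W', P')$ then yields the desired $\mathscr{S}_k$-expansion in~$G$, which we return.

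The main obstacle will be the parity-controlled routing in the second step: one must argue that the slack $N - n$ is large enough to realise paths $R_u$ and $R_v$ of the required parities that are internally disjoint from each other, from the interior of~$P$, and from~$W'$. This is handled by first routing $u$ and $v$ to a common ``strip'' outside~$W'$ through vertex-disjoint paths (available in~$W$ by its grid structure) and then along rows or columns of~$W$ to the chosen perimeter vertices of~$W'$; the cleanness of~$W$ together with the availability of two colour classes among the perimeter vertices of~$W'$ provides the required parity flexibility. The total running time is dominated by the call to Corollary~\ref{cor:cleanwall}, with Lemma~\ref{lemma:oddear} and the routing adding only linear overhead; the $\log\abs{V(G)}$ factor accommodates standard auxiliary data structures used to locate~$u$ and~$v$ inside~$W$ and to excise the subwall~$W'$.
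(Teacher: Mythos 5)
Your approach follows the paper's strategy closely: extract a large clean wall via Corollary~\ref{cor:cleanwall}, obtain an odd ear via Lemma~\ref{lemma:oddear}, pass to a smaller subwall disjoint from the ear, and feed the result to Lemma~\ref{lem:cleanAndOddHandle2singlyoddminor}. The one genuine difference is how the odd $W'$-ear with endvertices on the perimeter of~$W'$ is produced. You build it by hand-routing the ear's endpoints $u,v$ to perimeter vertices $u',v'$ of~$W'$ chosen in the right colour classes, and you flag this parity-controlled routing as ``the main obstacle.'' The paper avoids this entirely: choose a contiguous $(f_{\ref{lem:InsideOutWall}}(k)+2)$-subwall~$W'$ of~$W$ missing the two endvertices of~$P$, observe that $W\cup P$ is $2$-connected and non-bipartite while $W'$ is $2$-connected and bipartite, and apply Lemma~\ref{lemma:oddear} a second time with $H=W'$ inside $W\cup P$. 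This directly yields an odd $W'$-ear, and its endvertices automatically lie on the perimeter of~$W'$ because, for a contiguous subwall chosen disjoint from~$P$, every edge of $W\cup P$ leaving~$W'$ meets~$W'$ only along its perimeter. That single re-application of Lemma~\ref{lemma:oddear} absorbs all the parity bookkeeping you carry out explicitly, and costs a factor of only~$2$ rather than your~$4$ in the wall size. Your version is workable but would need the disjoint-path and parity-choice details spelled out.

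Separately, your justification for the failure branch contains a non sequitur: ``since the algorithm fails to produce an $\mathscr{S}_k$-expansion we must have $\mathsf{big}_{\mathscr{S}}(G)\leq k-1$'' is not a valid inference. When Corollary~\ref{cor:cleanwall} returns a small treewidth certificate instead of a wall, you learn that $\mathsf{tw}(G)\leq f_{\ref{cor:cleanwall}}(N)$; this says nothing about whether $\mathscr{S}_k$ is an \oddminor{} of~$G$, and indeed $G$ may well contain~$\mathscr{S}_k$ while having treewidth below the threshold at which your wall-extraction would notice it. What the proof actually establishes, and what Theorem~\ref{thm:singleoddstructure-actual} actually uses, is the conclusion $\mathsf{tw}(G)\leq f_{\ref{lem:find-spb}}(k)$ in the failure case; the phrasing in terms of $\mathsf{big}_{\mathscr{S}}(G)$ in the lemma statement should be read through that lens.
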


\begin{proof}
    Let $f_{\ref{cor:cleanwall}}$ be the function from \Cref{cor:cleanwall} and let~$f_{\ref{lem:InsideOutWall}}$ be the function from \Cref{lem:InsideOutWall}. 
    Let~${f_{\ref{lem:find-spb}}(k) \coloneqq f_{\ref{cor:cleanwall}}(2(f_{\ref{lem:InsideOutWall}}(k)+2))}$. 
    We may assume that $f_{\ref{cor:cleanwall}}$ and~$f_{\ref{lem:InsideOutWall}}$ are monotone, and hence that~$f_{\ref{lem:find-spb}}$ is monotone as well. 

    Assume~${\mathsf{tw}(G) > f(k)}$. 
    By \Cref{cor:cleanwall}, $G$ contains a clean ${2(f_{\ref{lem:InsideOutWall}}+2)}$-wall~$W$ as a subgraph (found in linear time depending on~$k$). 
    By \Cref{lemma:oddear}, $G$ contains odd $W$-ear~$P$ (found in linear time). 
    Now since the order of~$W$ is~${2(f_{\ref{lem:InsideOutWall}}(k)+2)}$, it follows that~$W$ contains a ${(f_{\ref{lem:InsideOutWall}}(k)+2)}$-wall~$W'$ as a subgraph that is vertex-disjoint from~$P$. 
    Since~$W$ is $2$-connected, $G'$ contains an odd $W'$-ear~$P'$ whose endvertices are in the perimeter of~$W'$. 
    Now \Cref{lem:cleanAndOddHandle2singlyoddminor} yields an odd $\mathscr{S}_k$-expansion (found constant time depending on~$k$). 
\end{proof}

It is well known that the tree-decomposition from \Cref{prop:cutvertextree} of a graph~$G$, can be refined with arbitrary tree-decompositions of the blocks of~$G$. 
Hence, any width parameter that is monotone with respect to the blocks of a graph is equal to the maximum width over all its blocks. 
In particular, we observe the following. 
\begin{observation}
    \label{obs:blind-tw-blocks}
    The $\mathcal{G}$-blind-treewidth of a graph~$G$ is equal to the maximum $\mathcal{G}$-blind-treewidth over all blocks of~$G$ for $\mathcal{G}$ equal to either~$\mathcal{B}$ or~$(\mathcal{B} \cup \mathcal{P})$. 
\end{observation}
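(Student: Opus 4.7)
The plan is to prove both inequalities, $\mathcal{G}\text{-}\mathsf{blind}\text{-}\mathsf{tw}(G) \leq \max_i \mathcal{G}\text{-}\mathsf{blind}\text{-}\mathsf{tw}(B_i)$ and $\mathcal{G}\text{-}\mathsf{blind}\text{-}\mathsf{tw}(G) \geq \mathcal{G}\text{-}\mathsf{blind}\text{-}\mathsf{tw}(B_i)$ for every block $B_i$ of $G$, via the standard refinement and restriction of tree-decompositions along the block-cut tree of \cref{prop:cutvertextree}. For the $\leq$-direction I would glue optimal tree-decompositions $(T_i, \beta_i)$ of each block by adding, at every cut vertex $v$ shared by two blocks, an edge between two nodes (one in each $T_i$) whose bags contain $v$; the bags of the resulting tree-decomposition of $G$ are exactly the bags of the $(T_i, \beta_i)$. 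For the $\geq$-direction I would restrict any tree-decomposition $(T, \beta)$ of $G$ to $B_i$ by setting $\beta'(t) \coloneqq \beta(t) \cap V(B_i)$, which is a tree-decomposition of $B_i$ with bag-sizes at most those of $\beta$.

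The essential step, which ensures that the $\mathcal{G}$-membership of bags is preserved by both operations, is the following preservation lemma: for every block $B$ of $G$ and every $X \subseteq V(G)$, writing $X' \coloneqq X \cap V(B)$, the torso of $X'$ in $B$ is the induced subgraph of the torso of $X$ in $G$ on vertex set $X'$. I would prove this via the block-cut structure: each component $H$ of $B - X'$ extends uniquely to a component $H'$ of $G - X$ by appending the block-cut subtrees attached at the cut vertices of $B$ lying in $V(H)$, and one shows $V(H') \cap V(B) = V(H)$ together with $N_G(V(H')) \cap X' = N_B(V(H))$. The point is that a vertex of $V(H') \setminus V(B)$ reaches $V(B)$ only through the unique cut vertex $v$ attaching its subtree, which lies in $V(H) \subseteq V(B) \setminus X$ and hence cannot be in $X'$, so no new neighbors in $X'$ appear. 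Components of $G - X$ disjoint from $V(B)$ lie in a single subtree and contribute only trivial cliques. The $\mathcal{P}$-half of the preservation lemma is then immediate, and the $\mathcal{B}$-half follows from the fact that every odd cycle is $2$-edge-connected, hence contained in a single block, and that blocks pairwise share at most one vertex.

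The main obstacle is the $\mathcal{P}$-case of the preservation lemma, since for arbitrary subsets $X' \subseteq X$ planarity of the torso of $X$ need not descend to planarity of the torso of $X'$ (removing fewer vertices can merge components and create new planarity-violating cliques). The block structure is exactly what forbids such mergings when $X' = X \cap V(B)$: because $B$ is attached to the rest of $G$ only via cut vertices, the components of $G - X$ that meet $V(B)$ are in a clean bijection with the components of $B - X'$ under matching neighborhoods in $X'$, and this makes the torso of $X'$ in $B$ an induced subgraph of the torso of $X$ in $G$.
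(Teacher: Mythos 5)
Your proof is correct and follows the same route the paper has in mind: the paper treats the observation as a ``well-known'' consequence of refining the block-cut tree-decomposition of \Cref{prop:cutvertextree} with arbitrary tree-decompositions of the blocks together with block-monotonicity of the parameter, and your preservation lemma --- that the torso of $X \cap V(B)$ in a block $B$ coincides with the subgraph of the torso of $X$ in $G$ induced on $X \cap V(B)$ --- is precisely the implicit content of that step, for the $\mathcal{B}$-half as well as the $\mathcal{P}$-half. The only cosmetic detail worth tightening is the gluing at a cut vertex $v$ lying in three or more blocks, where adding one edge per incident pair would create a cycle; the standard fix is to insert a singleton node with bag $\{v\}$ and join it to one node per incident block-tree (singleton bags are always globally bipartite, hence never contribute to the blind-width).
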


Finally, we prove \Cref{thm:singleoddstructure} in a slightly stronger version. 

\begin{theorem}\label{thm:singleoddstructure-actual}
    It holds that ${\mathcal{B}\text{-}\mathsf{blind}\text{-}\mathsf{tw} \sim \mathsf{big}_{\mathscr{S}}}$. 
    In particular, there exist computable functions ${f_{\ref{thm:singleoddstructure-actual}},g_{\ref{thm:singleoddstructure-actual}}\colon\mathbb{N}\to\mathbb{N}}$ 
    and an algorithm that, given a positive integer~$k$ and a graph~$G$ as inputs, 
    in time $\mathcal{O}(g_{\ref{thm:singleoddstructure-actual}}(k)(\abs{V(G)} \log \abs{V(G)} + \abs{E(G)}))$, 
    finds either
    \begin{enumerate}
        \item an odd expansion of the singly parity-breaking grid of order~$k$, or
        \item a tree-decomposition~${(T,\beta)}$ for~$G$ of $\mathcal{B}$-blind-width~$f_{\ref{thm:singleoddstructure-actual}}(k)$, where, in particular, for every node~${t \in V(T)}$ with~${\abs{\beta(t)} > f_{\ref{thm:singleoddstructure-actual}}(k)}$, we have that~$G[\beta(t)]$ is a bipartite block of~$G$ and the adhesion of~$t$ is at most one.
    \end{enumerate}
\end{theorem}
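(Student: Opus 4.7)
The plan is to derive the theorem by combining the block-level result of \Cref{lem:find-spb} with the block-cut-tree decomposition from \Cref{prop:cutvertextree}. One direction of the asymptotic equivalence, $\mathsf{big}_{\mathscr{S}} \precsim \mathcal{B}\text{-}\mathsf{blind}\text{-}\mathsf{tw}$, is already supplied by \Cref{cor:bpb-f(B-tw)}, so the work is to produce the stated tree-decomposition whenever no odd $\mathscr{S}_k$-expansion is returned.

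First, I would invoke \Cref{prop:cutvertextree} (applied componentwise if $G$ is disconnected) to compute, in time $\mathcal{O}(\abs{V(G)} + \abs{E(G)})$, a tree-decomposition $(T_0, \beta_0)$ of $G$ of adhesion one whose torso at each node $t$ equals a block $B_t := G[\beta_0(t)]$. I then process each $t \in V(T_0)$ independently. If $B_t$ is bipartite, I keep the node $t$ untouched. If $B_t$ is non-bipartite, then it is necessarily $2$-connected (since every $K_2$-block is bipartite), so \Cref{lem:find-spb} applies with parameter $k$: it either returns an odd $\mathscr{S}_k$-expansion in $B_t \subseteq G$, in which case the algorithm halts and outputs this expansion, or it certifies that $\mathsf{tw}(B_t) < f_{\ref{lem:find-spb}}(k)$, and \Cref{prop:treewidth} then produces a tree-decomposition $(T_t, \beta_t)$ of $B_t$ of width less than $f_{\ref{lem:find-spb}}(k)$. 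I refine $(T_0, \beta_0)$ by replacing each such node $t$ with $(T_t, \beta_t)$, re-attaching every former neighbour $t'$ of $t$ in $T_0$ to some node of $T_t$ whose bag contains the cut vertex shared between $B_t$ and $B_{t'}$. Call the resulting decomposition $(T, \beta)$ and set $f_{\ref{thm:singleoddstructure-actual}}(k) := f_{\ref{lem:find-spb}}(k)$.

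It remains to verify that $(T, \beta)$ has $\mathcal{B}$-blind-width at most $f_{\ref{thm:singleoddstructure-actual}}(k)$ and that every large bag has the asserted structure. Any bag $\beta(t)$ with $\abs{\beta(t)} > f_{\ref{thm:singleoddstructure-actual}}(k)$ cannot come from the refinement of a non-bipartite block, hence equals $V(B)$ for some bipartite block $B$ of $G$. To check that $(G, \beta(t)) \in \mathcal{B}$, let $C$ be any odd cycle of $G$; since $C$ is $2$-connected it lies inside a single block $B'$ of $G$, and since $B$ is bipartite we have $B' \neq B$, so $\abs{V(C) \cap V(B)} \leq 1$ as distinct blocks share at most one vertex. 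Thus $V(B)$ is globally bipartite and, by construction, the only neighbours of $t$ in $T$ are the former neighbours in $T_0$, each joined to $t$ via the unique shared cut vertex, so the adhesion at $t$ is at most one.

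The running time is dominated by the single call to \Cref{prop:cutvertextree} and the sum over blocks of the costs of \Cref{lem:find-spb} and \Cref{prop:treewidth}; since the blocks partition the edges of $G$ and the parameter $k$ is fixed, this totals $\mathcal{O}(g_{\ref{thm:singleoddstructure-actual}}(k)(\abs{V(G)} \log \abs{V(G)} + \abs{E(G)}))$ for an appropriate $g_{\ref{thm:singleoddstructure-actual}}$. The main conceptual difficulty, namely forcing an odd $\mathscr{S}_k$-expansion inside a non-bipartite $2$-connected graph of large treewidth, is fully absorbed by \Cref{lem:find-spb}; what remains is the elementary but essential observation that odd cycles never cross block boundaries, which is precisely what turns each bipartite block into a globally bipartite bag and thereby closes the argument.
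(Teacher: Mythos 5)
Your proposal follows essentially the same approach as the paper's proof: apply \Cref{prop:cutvertextree} to decompose into blocks, use \Cref{lem:find-spb} (together with \Cref{prop:treewidth}) on the non-bipartite blocks to either find the odd $\mathscr{S}_k$-expansion or conclude they have small treewidth, refine the block decomposition accordingly, and close the other direction of the equivalence with \Cref{cor:bpb-f(B-tw)}. You have simply written out in full the details — in particular the verification that a bipartite block is globally bipartite because odd cycles live inside single blocks and distinct blocks share at most one vertex — that the paper leaves to the reader in its terse three-line proof.
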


\begin{proof}
    Let~$f_{\ref{thm:singleoddstructure-actual}} \coloneqq f_{\ref{lem:find-spb}}$ from \Cref{lem:find-spb}. 
    We use \Cref{prop:cutvertextree} and for each block \Cref{prop:treewidth} to either find a non-bipartite block~$B$ with~${\mathsf{tw}(B) > f_{\ref{thm:singleoddstructure-actual}}(k)}$ and an odd $\mathscr{S}_k$-expansion by \Cref{lem:find-spb}, 
    or a tree-decomposition of $\mathcal{B}$-blind-width~$f(k)$. 
    
    With \Cref{cor:bpb-f(B-tw)}, the asymptotic equivalence of the parameters follows. 
\end{proof}

\section{Excluding a singly parity-crossing grid as an \oddminor}
\label{sec:oddcrossing}

In this section we prove \cref{thm:singleoddcrossstructure}. 
To do this, we roughly follow the same strategy as we did in \Cref{sec:singleodd}.
However, since the structure we are interested in is slightly more complicated, we have to perform some additional intermediate steps. 
That is, we first show that every $2$-connected non-bipartite graph that cannot be decomposed along separators of size at most three into torsos which are either small or planar contains a clean wall with a cross on its four corners as a subgraph. 
Once this is established, we make use of \cref{lemma:oddear} as before to establish a parity break which will then be combined with the cross to form one of the three singly parity-crossing grids as an \oddminor.

We begin with a precise definition of the singly parity-crossing grids, see \Cref{figure:singleoddcross}. 

\begin{definition}[Singly parity-crossing grids]
    \label{def:singleoddcross}
    The graph~$\mathscr{C}^1_k$ is the graph obtained from the ${(2k \times 2k)}$-grid by adding the edges~${\{(k,k),(k+1,k+1)\}}$ and~${\{(k,k+1),(k+1,k)\}}$. 
    We denote the corresponding parametric graph by~${\mathscr{C}^1 \coloneqq \langle \mathscr{C}^1_k \rangle_{k \in \mathbb{Z}^{+}}}$. 
    
    The graph~$\mathscr{C}^2_k$ is the graph obtained from the ${(2k \times 2k)}$-grid by adding the edges ${\{(k,k),(k+1,k+1)\}}$, ${\{(k,k+1),(k+1,k)\}}$, and subdividing~${\{(k,k+1),(k+1,k)\}}$ with a vertex we call~$x$. 
    We denote the corresponding parametric graph by~${\mathscr{C}^2 \coloneqq \langle \mathscr{C}^2_k \rangle_{k \in \mathbb{Z}^{+}}}$. 
    
    The graph~$\mathscr{C}^3_k$ is the graph obtained from the ${(2k \times 2k)}$-grid by adding the edges ${\{(k,k),(k+1,k+1)\}}$ and~${\{(k,k+1),(k+1,k)\}}$, subdividing both of them with vertices~$y$ and~$x$, respectively, and adding an edge between~${(k+1,k+1)}$ and~$x$. 

    For each~${i \in [3]}$, we say~$\mathscr{C}^i_k$ is a \emph{singly-parity-crossing grid of order~$k$}. 
    
    We denote the corresponding parametric graph by~${\mathscr{C}^3 \coloneqq \langle \mathscr{C}^3_k \rangle_{k \in \mathbb{Z}^{+}}}$. 
\end{definition}

\begin{lemma}
    \label{lem:BP-tw-singly-crossing-grid}
    It holds that ${(\mathcal{B} \cup \mathcal{P})\text{-}\mathsf{blind}\text{-}\mathsf{tw}(\mathscr{C}^i_k) = \mathsf{tw}(\mathcal{U}_k) + 1}$ for every~${i \in [3]}$ and every integer~${k \geq 3}$. 
\end{lemma}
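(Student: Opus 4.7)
My plan is to combine the strategies of the analogous \Cref{lem:P-tw-single-crossing-grid} and \Cref{lem:B-tw-singly-grid} via a case split on whether a hypothetical large bag of a candidate tree-decomposition is absorbed by $\mathcal{P}$ or by $\mathcal{B}$.

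For the upper bound $(\mathcal{B}\cup\mathcal{P})\text{-}\mathsf{blind}\text{-}\mathsf{tw}(\mathscr{C}^i_k) \leq \mathsf{tw}(\mathscr{U}_k) + 1$, the case $i = 1$ is immediate since $\mathscr{C}^1_k = \mathscr{U}_k$, using \Cref{lem:P-tw-single-crossing-grid} together with the monotonicity $(\mathcal{B}\cup\mathcal{P})\text{-}\mathsf{blind}\text{-}\mathsf{tw} \leq \mathcal{P}\text{-}\mathsf{blind}\text{-}\mathsf{tw}$. For $i \in \{2,3\}$ I would first verify that $\mathsf{tw}(\mathscr{C}^i_k) = \mathsf{tw}(\mathscr{U}_k)$: $\mathscr{C}^2_k$ is a subdivision of $\mathscr{U}_k$, and for $\mathscr{C}^3_k$ the two subdivision vertices $x, y$ together with the extra edge $(k+1,k+1)x$ can be absorbed into any bag of an optimal tree-decomposition of $\mathscr{U}_k$ that already contains the central $K_4$ on $\{(k,k),(k,k+1),(k+1,k),(k+1,k+1)\}$ without increasing the width. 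The trivial bound $(\mathcal{B}\cup\mathcal{P})\text{-}\mathsf{blind}\text{-}\mathsf{tw}(G) \leq \mathsf{tw}(G) + 1$ then finishes the upper bound.

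For the lower bound I would argue by contradiction. Assume some tree-decomposition $(T,\beta)$ of $\mathscr{C}^i_k$ has $(\mathcal{B}\cup\mathcal{P})$-blind-width $k' \leq \mathsf{tw}(\mathscr{U}_k)$. Since $\mathscr{U}_k$ is a minor of $\mathscr{C}^i_k$ and treewidth is $\minor$-monotone, $\mathsf{tw}(\mathscr{C}^i_k) \geq k'$, so there is a node $t$ with $|\beta(t)| > k'$, hence $|\beta(t)| \geq 2$, and $(\mathscr{C}^i_k, \beta(t)) \in \mathcal{B} \cup \mathcal{P}$. If the torso at $t$ is planar, the argument of \Cref{lem:P-tw-single-crossing-grid} transfers essentially verbatim: the adhesion at $t$ is at most four, no size-four adhesion is a minimal separator, so $\beta(t)$ contains the interior grid $\{(i,j) : i, j \in [2, 2k-1]\}$, and contracting away everything outside the central square (and, for $i = 3$, contracting $x$ onto $(k,k+1)$ and $y$ onto $(k,k)$, which turns $\mathscr{C}^3_k$ into $\mathscr{U}_k$) exhibits a $K_5$-minor in the torso, contradicting planarity. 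If instead $\beta(t)$ is globally bipartite, I would show, in the spirit of \Cref{lem:B-tw-singly-grid}, that any two vertices of $\mathscr{C}^i_k$ lie on a common odd cycle: in each variant there is a monochromatic edge near the centre of the grid --- $(k,k)(k+1,k+1)$ for $i \in \{1,2\}$ and $(k+1,k+1)x$ for $i = 3$ (with respect to the natural $2$-colouring, where the new vertices $x, y$ of $\mathscr{C}^2_k,\mathscr{C}^3_k$ receive the opposite colour class from their neighbours) --- and from any cycle through $u,v$ supplied by $2$-connectivity one can reroute a short segment through this parity-breaking edge to make the overall parity odd, contradicting $|\beta(t)| \geq 2$ and global bipartiteness.

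The main obstacle is the bipartite case for $\mathscr{C}^2_k$ and $\mathscr{C}^3_k$: the subdivision vertices $x$ and $y$ land in the opposite colour class, so the path replacing a former crossing edge is now bipartite and \emph{does not} break parity. One must therefore carefully pinpoint which edge remains monochromatic and run a short case analysis on the relative positions of $u, v$ and the central grid vertices to show that the parity-flipping rerouting can always be performed. The planar-torso case, by contrast, is a routine adaptation of \Cref{lem:P-tw-single-crossing-grid} and presents no new difficulty.
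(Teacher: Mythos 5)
Your proposal follows essentially the same approach as the paper: trivial upper bound $\leq \mathsf{tw}+1$, lower bound by contradiction splitting on whether the large bag has planar torso (import the $K_5$-minor argument from \Cref{lem:P-tw-single-crossing-grid}, with a small case analysis on whether $x,y$ lie in the bag) or is globally bipartite (any two vertices of $\mathscr{C}^i_k$ lie on a common odd cycle, as in \Cref{lem:B-tw-singly-grid}). Two remarks. First, the $\mathsf{tw}(\mathcal{U}_k)$ in the lemma statement is a typo inherited from \Cref{lem:P-tw-single-crossing-grid}: the intended and used quantity is $\mathsf{tw}(\mathscr{C}^i_k)+1$ (this is how the lemma is invoked in \Cref{cor:bpb-f(BP-tw)}). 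You read the statement literally and therefore inserted a step establishing $\mathsf{tw}(\mathscr{C}^i_k)=\mathsf{tw}(\mathscr{U}_k)$; that step is correct (and for $i=2$ a subdivision of $\mathscr{U}_k$, as you note) but is not part of the paper's argument. Second, the ``main obstacle'' you flag in the globally bipartite case is not actually an obstacle: in each $\mathscr{C}^i_k$ there is a single monochromatic edge under the canonical $2$-colouring of the underlying grid (\,$\{(k,k),(k+1,k+1)\}$ for $i\in\{1,2\}$, and $\{(k+1,k+1),x\}$ for $i=3$\,), and this is exactly what makes the graph non-bipartite. The paper's proof, like yours, asserts the ``every two vertices lie on a common odd cycle'' claim without further detail by analogy with \Cref{lem:B-tw-singly-grid}, so your sketch of the rerouting argument through the unique parity-breaking edge is, if anything, slightly more explicit than the paper's.
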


\begin{proof}
    Fix~${i \in [3]}$ and an integer~${k \geq 3}$. 
    Clearly, ${(\mathcal{B} \cup \mathcal{P})\text{-}\mathsf{blind}\text{-}\mathsf{tw}(\mathscr{C}^i_k) \leq \mathsf{tw}(\mathscr{C}^i_k) + 1}$. 
    Suppose for a contradiction that ${k' \coloneqq (\mathcal{B} \cup \mathcal{P})\text{-}\mathsf{blind}\text{-}\mathsf{tw}(\mathscr{C}^i_k) \leq \mathsf{tw}(\mathscr{C}^i_k)}$. 
    Let~$\mathcal{T} = (T,\beta)$ be a tree-decomposition of~$\mathscr{C}^i_k$ of $(\mathcal{B}\cup\mathcal{P})$-blind-width~${k'}$.
    Since by assumption~${k' \leq \mathsf{tw}(\mathscr{C}^i_k)}$, there is a node~${t \in V(T)}$ such that~${\abs{\beta(t)} > k'}$ and either $\beta(t)$ is globally bipartite or the torso~$H$ on~$t$ is planar. 

    Similar to the proof of \cref{lem:B-tw-singly-grid} we may observe that in any of the three singly parity-crossing grid of order $k$, any pair of vertices is contained in an odd cycle.
    Hence, we may further assume that $\beta(t)$ is not globally bipartite. 
    
    As in the proof of \cref{lem:P-tw-single-crossing-grid}, notice that~$\beta(t)$ contains~${\{ (i,j) \mid i,j \in [2,2k-1] \}}$. 
    If~${i = 1}$, this immediately yields a~$K_5$-minor in~$H$ as in the proof of \cref{lem:P-tw-single-crossing-grid}, a contradiction. 
    If~${i = 2}$, then note that if~${x \notin \beta(t)}$, then $\{ (k,k+1), (k+1,k) \}$ is an adhesion set of~$t$. 
    So regardless whether~${x \in \beta(t)}$, we get the $K_5$-minor as in the previous case. 
    Finally, if~${i = 3}$, then yields similarly as in the previous cases a $K_5$-minor, whether~$x$ or~$y$ are contained in~$\beta(t)$ or not. 
\end{proof}

\begin{corollary}
    \label{cor:bpb-f(BP-tw)}
    It holds that~${\mathsf{big}_{\mathfrak{C}}(G) \leq ((\mathcal{B}\cup \mathcal{P})\text{-}\mathsf{blind}\text{-}\mathsf{tw}(G) - 1)/2 + 2}$ for every graph~$G$.
\end{corollary}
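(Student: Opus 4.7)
The plan is to mirror the proof of Corollary~\ref{cor:pb-f(P-tw)} essentially verbatim, now invoking Lemma~\ref{lem:BP-tw-singly-crossing-grid} in place of Lemma~\ref{lem:P-tw-single-crossing-grid}. Fix a graph~$G$ and set $k \coloneqq \mathsf{big}_{\mathfrak{C}}(G)$. The first move is to dispose of the low values of $k$: if $k \leq 2$, the right-hand side is already at least~$2$, so the inequality holds trivially. Hence we may assume $k \geq 3$, which is precisely the regime in which Lemma~\ref{lem:BP-tw-singly-crossing-grid} applies.

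By definition of $\mathsf{big}_{\mathfrak{C}}$ there exists some index $i \in [3]$ with $\mathscr{C}^i_k \odd G$. Corollary~\ref{cor:oddmonotone} asserts that $(\mathcal{B}\cup\mathcal{P})\text{-}\mathsf{blind}\text{-}\mathsf{tw}$ is $\odd$-monotone, so I would chain
\[
    (\mathcal{B}\cup\mathcal{P})\text{-}\mathsf{blind}\text{-}\mathsf{tw}(G) \;\geq\; (\mathcal{B}\cup\mathcal{P})\text{-}\mathsf{blind}\text{-}\mathsf{tw}(\mathscr{C}^i_k) \;=\; \mathsf{tw}(\mathscr{U}_k)+1 \;\geq\; \mathsf{tw}(\mathscr{G}_{2k})+1 \;=\; 2k+1,
\]
where the equality comes from Lemma~\ref{lem:BP-tw-singly-crossing-grid} and the penultimate inequality uses that $\mathscr{G}_{2k}$ is a minor of $\mathscr{U}_k$ together with the minor-monotonicity of treewidth. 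Rearranging yields $k \leq \bigl((\mathcal{B}\cup\mathcal{P})\text{-}\mathsf{blind}\text{-}\mathsf{tw}(G)-1\bigr)/2$, which is in fact slightly stronger than the stated bound.

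There is no real obstacle here; all the structural content is already packaged into Lemma~\ref{lem:BP-tw-singly-crossing-grid} (which is what actually required the careful case analysis over the three members of~$\mathfrak{C}$, including handling the subdivision vertices~$x,y$). The sole reason the statement of the corollary carries a harmless ``$+2$'' is the bookkeeping around $k \in \{1,2\}$, outside the range of Lemma~\ref{lem:BP-tw-singly-crossing-grid}.
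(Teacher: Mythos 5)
Your proof is correct and matches the paper's argument essentially verbatim: dispose of $k \leq 2$ trivially, then chain odd-monotonicity (\Cref{cor:oddmonotone}) with \Cref{lem:BP-tw-singly-crossing-grid} and the minor-monotonicity of treewidth via $\mathscr{G}_{2k}$. Incidentally, you cite the correct lemma; the paper's own proof contains a typo, referencing \Cref{lem:B-tw-singly-grid} where \Cref{lem:BP-tw-singly-crossing-grid} is intended, and similarly writes $\mathsf{big}_{\mathscr{S}}$ in place of $\mathsf{big}_{\mathfrak{C}}$ in the final display.
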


\begin{proof}
    Let~$G$ be a graph with~${\mathsf{big}_{\mathfrak{C}}(G) = k}$ for some positive integer~$k$. 
    If~${k \leq 2}$, the statement clearly holds. 
    With \Cref{cor:oddmonotone} and \Cref{lem:B-tw-singly-grid}, we calculate for each~${i \in [3]}$, 
    \[
        (\mathcal{B} \cup \mathcal{P})\text{-}\mathsf{blind}\text{-}\mathsf{tw}(G) 
        \geq (\mathcal{B} \cup \mathcal{P})\text{-}\mathsf{blind}\text{-}\mathsf{tw}(\mathscr{C}^i_k) 
        = \mathsf{tw}(\mathscr{C}^i_k) + 1 
        \geq \mathsf{tw}(\mathscr{G}_{2k}) + 1 
        = 2\mathsf{big}_{\mathscr{S}}(G) + 1, 
    \]
    as desired. 
\end{proof}

Next, we show that every graph that contains a huge single-crossing grid as a minor must contain a large clean wall with a cross on its corners as a subgraph.

Let~${r}$ be a positive integer, let~$W$ be a clean $r$-wall, and let~$s_1$, $s_2$, $t_1$, and~$t_2$ be its corners named, in this cyclic order, after their appearance when traversing along the perimeter of~$W$. 
We say that the graph~$U$ obtained from~$W$ by adding two vertex-disjoint $W$-ears~$P_1$, $P_2$ such that~$P_i$ has endpoints~$s_i$ and~$t_i$ for each~${i \in [2]}$, is a \emph{clean cross-wall} of \emph{order~$r$}. 

\begin{lemma}
    \label{lemma:cleancrosswall}
    There exists a computable function~${f_{\ref{lemma:cleancrosswall}} \colon \mathbb{N} \to \mathbb{N}}$ such that for every~${k \in \mathbb{N}}$ and every graph~$G$ that contains the single-crossing grid of order~${f_{\ref{lemma:cleancrosswall}}(k)}$ as a minor, $G$ also contains a clean cross-wall of order~$k$ as a subgraph. 
\end{lemma}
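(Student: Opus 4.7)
My plan is to exploit a $\mathscr{U}_n$-expansion $\eta$ in $G$ for $n$ sufficiently large in terms of $k$: the grid part of $\eta$ provides a large grid-like subgraph of $G$ inside which a clean wall can be extracted, while the two crossing edges of $\mathscr{U}_n$ supply precisely the raw material for the two required ears. Concretely, fix such an expansion and identify the two crossing edges $e_1 = \eta(\{(n,n),(n+1,n+1)\})$ and $e_2 = \eta(\{(n,n+1),(n+1,n)\})$, together with their endpoints $x_1, y_1, x_2, y_2$ in the respective branch sets. Replacing each branch set $\eta(v)$ by a spanning tree (and pruning as needed) yields a subgraph $H \subseteq G$ that topologically approximates a subdivision of the $(2n \times 2n)$-grid augmented by $e_1$ and $e_2$.

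Next, within the outer frame of $H$ I would extract a clean $k$-wall $W$, first by locating a wall of order $h_{\ref{prop:cleanwall}}(k)$ inside the outer part of the grid and then passing to a clean $k$-subwall via Proposition~\ref{prop:cleanwall}. Choose $W$ so that its four corners $s_1, s_2, t_1, t_2$ lie, in cyclic order along the perimeter of~$W$, in the four quadrants of the grid, and so that a large portion of the inner grid structure of~$H$ remains disjoint from~$W$. Then, using the grid connectivity of this inner portion, route four pairwise vertex-disjoint paths $P_1, P_2, P_3, P_4$ from $s_1, t_1, s_2, t_2$ to $x_1, y_1, x_2, y_2$ respectively, each contained in a different ``quadrant'' of the grid. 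The concatenations $P_1 \cup e_1 \cup P_2$ and $P_3 \cup e_2 \cup P_4$ then form two vertex-disjoint $W$-ears from $s_1$ to $t_1$ and from $s_2$ to $t_2$, so the union of $W$ with these two ears is the required clean cross-wall of order~$k$.

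The main obstacle is the routing step: the four paths must realize precisely the ``crossing'' pairing (diagonally opposite corners matched), be pairwise vertex-disjoint, and avoid~$W$ except at its corners. Since the grid-subdivision part of $H$ is planar, two internally disjoint paths realizing this pairing within the grid alone cannot exist; this is exactly why the extra edges $e_1$ and $e_2$ of $\mathscr{U}_n$ are indispensable, as they break planarity at the centre of the grid and allow the two ears to coexist. Cleanness of $W$ is preserved by controlling the parities of lengths of the subdivided grid edges used to define $W$, which is why we start from a wall of order $h_{\ref{prop:cleanwall}}(k)$ and appeal to Proposition~\ref{prop:cleanwall} before the ears are attached.
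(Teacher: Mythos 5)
Your high-level plan — extract a large wall and use the two crossing edges of $\mathscr{U}_n$ as the raw material for the two ears, while invoking Proposition~\ref{prop:cleanwall} to ensure cleanness — is the same as the paper's. Two of the details, however, don't hold up.

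The more serious one is the routing picture. You ask for a wall~$W$ whose four corners lie in the four quadrants of the grid while ``a large portion of the inner grid structure of~$H$ remains disjoint from~$W$,'' and then you route four paths from the corners of~$W$ \emph{inward} to the crossing endpoints. A wall is a subdivision of $\mathscr{W}_m$, hence simply connected: it cannot enclose a ``hole'' in the middle of the grid. If the corners of~$W$ already sit in all four quadrants, then the perimeter of~$W$ surrounds the centre and the crossing endpoints lie inside the region filled by~$W$, so any path from a corner of~$W$ to those endpoints must re-enter the wall and is not internally disjoint from it. And if you instead push the ears to the outside of the perimeter of~$W$, the two ears realise the ``crossing'' pairing of the four corners on the boundary of a disc in a planar region, so by planarity they must intersect — the crossing edges are then unreachable. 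The fix, which is what the paper does, is to place the wall entirely \emph{to one side} of the crossing (using only part of the interior of the grid), so that the central crossing and a large grid region around it are strictly outside the wall; the two ears then start at the wall's corners, travel through that free grid region and the outer boundary ring (disjoint from the wall), pass through the crossing edges, and return to the opposite corners.

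The second gap is the passage from the $\mathscr{U}_n$-expansion to an actual subgraph of~$G$. Pruning branch sets to spanning trees gives some inflated copy of~$\mathscr{U}_n$, but since $\mathscr{U}_n$ has vertices of degree~$4$ and~$5$, this is not a subdivision of~$\mathscr{U}_n$, and ``topologically approximates a subdivision'' papers over this. The paper sidesteps the issue by first building, \emph{abstractly inside~$\mathscr{U}_n$}, the graph $H_4$ consisting of the wall plus the two ears; since $H_4$ has maximum degree~$3$, being a minor of~$G$ already makes it a topological minor, so $G$ contains a genuine subdivision~$B_1$ of~$H_4$. Only then is Proposition~\ref{prop:cleanwall} applied to the wall part of~$B_1$, and the ears in~$B_1$ are extended to reach the corners of the resulting clean subwall. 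Your sketch applies the cleanness step before this degree reduction, which leaves the existence of the claimed subgraph of~$G$ unjustified.

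Concretely, then: reposition~$W$ so that the cross lies outside its perimeter (e.g. in one quadrant or one half of the interior grid, with the outer boundary ring kept free for routing), and pass to a subdivision of the degree-$3$ auxiliary graph ``wall plus two ears'' before extracting cleanness.
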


\begin{proof}
    Let~$h_{\ref{prop:cleanwall}}$ be the function from \cref{prop:cleanwall}. 
    We set~${f_{\ref{lemma:cleancrosswall}}(k) \coloneqq 2(h_{\ref{prop:cleanwall}}(k)+2)+2}$. 
    
    Consider~${H_1 \coloneqq \mathscr{G}_{2(h_{\ref{prop:cleanwall}}(2k)+1)+2}}$ as a subgraph of ${\mathscr{U}_{2(h_{\ref{prop:cleanwall}}(2k)+1)+2}}$. 
    Now, let~$H_2$ be the subgraph of $H_1$ by deleting all vertices of $H_1$ that belong to the unique face of~$H_1$ which is not a $4$-cycle.
    Observe that~$H_2$ is isomorphic to~$\mathscr{G}_{2(h_{\ref{prop:cleanwall}}(k)+2)}$ 
    and thus, it contains a subgraph~$H_3$ isomorphic to the elementary ${(h_{\ref{prop:cleanwall}}(k)+2)}$-wall as a subgraph. 
    
    Let~$s_1'$, $s_2'$, $t_1'$, $t_2'$ be the four corners of~$H_3$ as encountered while traversing along its perimeter. 
    By using the cross of~$\mathscr{U}_{2(h_{\ref{prop:cleanwall}}(k)+2)+2}$ we may find the two vertex-disjoint $H_3$-ears~$P_1'$ and~$P_2'$ such that~$P_i'$ has endpoints~$s_i'$ and~$t_i'$ for each~${i \in [2]}$. 
    Let~$H_4$ be the union of~$H_3$, $P_1'$, and~$P_2'$, and let~$H_4'$ be the ${h_{\ref{prop:cleanwall}}(k)}$-subwall of~$H_3$ which is vertex-disjoint from the perimeter of~$H_3$.
    
    It follows that any graph~$G$ that contains $\mathscr{U}_{2(h_{\ref{prop:cleanwall}}(k)+2)+2}$ as a minor also contains~$H_4$ as a minor.
    Moreover, since~$H_4$ has maximum degree three, $G$ contains a subgraph~$B_1$ which is isomorphic to a subdivision of~$H_4$.
    Let us denote by~$P_1$ and~$P_2$ the two paths of~$B_1$ which correspond to the paths~$P_1'$ and~$P_2'$ of~$H_4$ respectively.
    Moreover, let~$B_1'$ denote the subgraph of~$B_1$ that corresponds to the subdivision of~$H_4'$.
    
    Then~$B_1'$ is an ${h_{\ref{prop:cleanwall}}(k)}$-wall which is vertex-disjoint from the perimeter of the ${(h_{\ref{prop:cleanwall}}(k)+2)}$-wall in~$B_1$.
    By \cref{prop:cleanwall} we may now find a clean $k$-wall~$B_2$ as a subwall of~$B_1'$.
    Let~$s_1$, $s_2$, $t_1$, $t_2$ be the four corners of~$B_2$ as encountered while traversing along its perimeter. 
    We may now enhance the two paths~$P_1$ and~$P_2$ to obtain two vertex-disjoint $B_2$-ears~$Q_1$ and~$Q_2$ such that~$Q_i$ has endpoints~$s_i$ and~$t_i$ for each~${i \in [2]}$. 
    
    Observe that the union of~$Q_1$, $Q_2$, and~$B_2$ is indeed a clean cross-wall of order~$k$ which is a subgraph of~$G$. 
\end{proof}

We now have everything in place to prove that every $2$-connected graph excluding the singly parity-crossing grids as \oddminors\ must have small $(\mathcal{B}\cup\mathcal{P})$-blind-treewidth. 

\begin{lemma}
    \label{lem:find-spc}
    There exist computable functions~$f_{\ref{lem:find-spc}}$,~$g_{\ref{lem:find-spc}}$ and an algorithm that, 
    given a positive integer~$k$ and a $\mathscr{U}_{f_{\ref{lem:find-spc}}(k)}$-expansion in a non-bipartite $2$-connected graph~$G$ as inputs, 
    in time~{$\mathcal{O}(g{\ref{lem:find-spc}}(k)\abs{V(G)}\log\abs{V(G)})$}, 
    finds an odd $\mathscr{C}^{i}_k$-expansion for some~${i \in [3]}$. 
\end{lemma}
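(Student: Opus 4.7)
The plan is to follow the two-step strategy used for \Cref{lem:find-spb}: first extract a clean wall-like skeleton from the given expansion via \Cref{lemma:cleancrosswall}, and then exploit the non-bipartiteness of $G$ via \Cref{lemma:oddear} when needed to produce a parity break of the correct shape. I would set $f_{\ref{lem:find-spc}}(k)$ large enough so that \Cref{lemma:cleancrosswall} applied to the given $\mathscr{U}_{f_{\ref{lem:find-spc}}(k)}$-expansion produces a clean cross-wall $U = W \cup P_1 \cup P_2$ of order $r$ in $G$, where $r$ comfortably exceeds $f_{\ref{lem:InsideOutWall}}(k) + 2$. Write $s_1, s_2, t_1, t_2$ for the corners of $W$ in cyclic order, with $P_i$ a $W$-ear between $s_i$ and $t_i$ for $i \in [2]$. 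Because $W$ is clean, all four corners lie in a common colour class of every proper $2$-colouring of $W$, so $P_i$ is an odd $W$-ear (in the sense of \Cref{sec:ears}) exactly when $\abs{E(P_i)}$ is odd.

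I would then split on the parities of $\abs{E(P_1)}$ and $\abs{E(P_2)}$, with each case producing one of the three singly parity-crossing grids. When both ears are odd, I would invoke the inside-out trick \Cref{lem:InsideOutWall} to draw a clean $2k$-wall $W''$ whose central brick contains the four corners of $W$ as degree-$2$ vertices, then mimic the construction in \Cref{lem:cleanAndOddHandle2singlyoddminor}: contract alternating edges in the rows of $W''$ to realise a $(2k \times 2k)$-grid as an odd minor, and leave a single edge deep inside each $P_i$ as the image of the crossing edge on the corresponding diagonal. The oddness of each $P_i$ supplies, exactly as in \Cref{lem:cleanAndOddHandle2singlyoddminor}, a monochromatic edge for the witness $2$-colouring, yielding an odd $\mathscr{C}^1_k$-expansion. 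When exactly one ear is odd, say $P_1$, the same construction treats $P_1$ as before; for $P_2$, now of even length, I would designate a single interior vertex $u$ of $P_2$ as the branch set of the subdivision vertex of $\mathscr{C}^2_k$, absorb the remaining interior vertices of $P_2$ into the branch sets of its two endpoint-corners, and set the witness colouring to flip at $u$ so that both incident branch edges become monochromatic. This delivers an odd $\mathscr{C}^2_k$-expansion.

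The remaining case, in which both $P_1$ and $P_2$ have even length, makes $U$ bipartite. Since $G$ is non-bipartite and $U$ is $2$-connected, \Cref{lemma:oddear} applied to $U \subseteq G$ returns in linear time an odd $U$-ear $Q$. I would then reduce by the location of the endpoints of $Q$: if both lie on the same $P_i$, rerouting the corresponding sub-arc of $P_i$ through $Q$ reverses the parity of $P_i$ and reduces to the one-odd-ear case above; if both lie in $W$, the slack in the order of $W$ allows Menger-type rerouting in $W$ to connect the endpoints of $Q$ to interior vertices of $P_1$ and $P_2$ through internally disjoint paths, which in turn reduces to the third sub-case; and in that last sub-case, with one endpoint of $Q$ in $W$ and the other in some $P_i$, the ear $Q$ together with appropriate sub-paths of $W$ realises precisely the extra edge of $\mathscr{C}^3_k$ linking a grid-corner to a subdivision vertex of one even diagonal, and combined with the inside-out wall and the two even ears $P_1, P_2$ this assembles an odd $\mathscr{C}^3_k$-expansion. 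The hard part will be the bookkeeping in Case C: the sub-wall used for the inside-out trick must be chosen disjoint from $Q$ and from the active portions of $P_1, P_2$, and the witness $2$-colouring must stay consistent across all branch sets in the presence of both the even ears and the new odd structure routed through $Q$. This is exactly what forces the generous slack in the choice of $f_{\ref{lem:find-spc}}(k)$. The running time is dominated by \Cref{lemma:cleancrosswall} and \Cref{lemma:oddear}, giving the claimed $\mathcal{O}(g_{\ref{lem:find-spc}}(k)\abs{V(G)}\log\abs{V(G)})$ bound.
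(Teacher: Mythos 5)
Your proposal starts along the same lines as the paper's proof (apply \Cref{lemma:cleancrosswall}, handle the case of one or two odd cross-ears directly via \Cref{lem:InsideOutWall}, and fall back on \Cref{lemma:oddear} when the clean cross-wall is bipartite), and your treatment of the ``one or both cross-ears are odd'' cases is essentially correct and mirrors the paper's \Cref{claim:bipartitecrossgrid}. The genuine gap is in the bipartite case, where your case analysis on the location of the endpoints $x,y$ of the odd ear $Q$ is both incomplete and internally inconsistent. You list three sub-cases --- both on the same $P_i$, both in $W$, or one in $W$ and one on some $P_i$ --- but never address the sub-case where $x$ and $y$ lie on \emph{different} ears $P_1$ and $P_2$; that configuration is in fact the central one that produces $\mathscr{C}^3_k$, and it cannot be avoided. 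Compounding this, your ``both in $W$'' sub-case reroutes both endpoints onto $P_1$ and $P_2$, which lands exactly in the missing ``different ears'' configuration, not in your third sub-case as you claim. Your sketch of the $\mathscr{C}^3_k$-expansion also does not fit the definition of $\mathscr{C}^3_k$: that graph has \emph{both} crossing edges subdivided, with the two subdivision vertices on different diagonals, so the two parity-relevant vertices must sit on distinct cross-ears, not one on the wall and one on a single ear.

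The idea you need and are circling around (you even flag the ``hard bookkeeping'' of keeping everything disjoint) is the paper's quadrant decomposition: split the large clean wall $W$ into four quadrants $W_1',\dots,W_4'$, each with an interior ${f_{\ref{lem:InsideOutWall}}(2k)}$-subwall $W_i$ well-separated from the quadrant perimeters, and observe that $\{x,y\}$ must avoid at least one $W_i$. Using the connectivity of $W$ and the existing ears $P_1,P_2$, one then builds two new vertex-disjoint $W_i$-ears $\hat P_1,\hat P_2$ that form a clean cross-wall of order $f_{\ref{lem:InsideOutWall}}(2k)$ \emph{and} together absorb both $x$ and $y$. This reduces the analysis to exactly two cases: $x$ and $y$ on the same $\hat P_j$ (reroute through $Q$ to make $\hat P_j$ odd and invoke the non-bipartite-cross-wall claim, yielding $\mathscr{C}^1_k$ or $\mathscr{C}^2_k$), or $x,y$ on different $\hat P_j$ (yielding $\mathscr{C}^3_k$ after a parity split on whether $x,y$ share a colour class under a fixed $2$-colouring). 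Without this reorganisation step, a case-analysis on the raw positions of $x,y$ inside $W\cup P_1\cup P_2$ is not complete, and the disjointness and cleanness bookkeeping you flagged does not resolve.
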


\begin{proof}
    Let~$f_{\ref{lem:InsideOutWall}}$ and~$f_{\ref{lemma:cleancrosswall}}$ denote the function from \Cref{lem:InsideOutWall} and \Cref{lemma:cleancrosswall}, respectively. 
    We set~${f(k) \coloneqq f_{\ref{lemma:cleancrosswall}}(2(f_{\ref{lem:InsideOutWall}}(2k)+4)+1 ) }$.
    
    By \Cref{lemma:cleancrosswall}, $G$ contains a clean cross-wall~$\tilde{H}$ of order~${2 (f_{\ref{lem:InsideOutWall}}(2k) + 4 ) + 1}$ as a subgraph.
    Let~$W$ denote the clean~${(2 (f_{\ref{lem:InsideOutWall}}(2k) + 4 ) + 1)}$-wall in~$\tilde{H}$ and~$P_1$ and~$P_2$ be the two vertex-disjoint $W$-ears in~$\tilde{H}$ that form the cross on its four corners. 
    Since~$W$ is clean, the mutual distance of its corners within~$W$ is always even. 
    Moreover, note that~$\tilde{H}$ can be found in linear time (depending on~$k$). 

    \begin{claim}
        \label{claim:bipartitecrossgrid}
        Every non-bipartite clean cross-wall~$H$ of order at least~${f_{\ref{lem:InsideOutWall}}(2k)}$ contains~$\mathscr{C}_k^1$ or~$\mathscr{C}_k^2$ as an \oddminor. 
    \end{claim}

    \begin{proofofclaim}
        Observe that~$W$ is a bipartite graph, the only way how~$H$ could not be bipartite is if one of the~$P_i$ is of odd length. 
        By evoking \Cref{lem:InsideOutWall} we may find, as a subgraph of~$H$, a graph~$H'$ which is obtained from a clean $2k$-wall~$W'$ by adding two vertex-disjoint $W'$-ears~$P_1'$ and~$P_2'$ such that~$P_i'$ has endpoints~$s_i'$ and~$t_i'$ for both~${i \in [2]}$ and the vertices~$s_1'$, $s_2'$, $t_1'$, $t_2'$ are vertices of degree two of the central brick of~$W'$ all belonging to the same colour class for every proper $2$-colouring of~$W$ such that every $a$-$b$-path on the central brick of~$W'$ contains a branch vertex of~$W'$ for all distinct choices of~${a,b \in \{ s_1',s_2',t_1',t_2'\}}$. 
        Moreover, $P_1'$ and~$P_2'$ can be chosen such that~${P_i \subseteq P_i'}$ and~$P_i'$ is of odd length if and only if~$P_i$ is of odd length for both~${i \in [2]}$.
    
        Let~$\eta$ be a $\mathscr{G}_{2k}$-expansion in~$W'$. 
        Let~$x_1$, $x_2$, $y_1$, $y_2$ be the four vertices of the central four-cycle in~$\mathscr{G}_{2k}$ in the order as encountered when following along this cycle. 
        Without loss of generality we may assume~${s_i \in V(\eta(x_i))}$ and~${t_i \in V(\eta(y_i))}$ for both~${i \in [2]}$. 
        Notice that~$x_i$ and~$y_i$ belong of the same colour class of every proper $2$-colouring of~$\mathscr{G}_{2k}$ for both~${i \in [2]}$ while~$x_1$ and~$x_2$ belong to different colour classes. 
        Fix a proper $2$-colouring~$c_1$ of the inflated copy~${U_{\eta} \subseteq W}$ of~$\mathscr{G}_{2k}$ and a proper $2$-colouring~$c_2$ of~$\mathscr{G}_{2k}$ and obtain a new colouring~$c$ of~$U_{\eta}$ by setting~${c(v) \coloneqq 3-c_1(v)}$ for every vertex~${v \in V(U_{\eta})}$ such that there exists a vertex~${u \in V(\mathscr{G}_{2k})}$ with~${c_2(u) = 2}$ and~${v \in V(\eta(u))}$. 
        It follows that~$\eta$ is an odd~$\mathscr{G}_{2k}$-expansion. 
    
        Finally, notice that, if one of the~$P_i$ is of odd length and the other one is of even length, ${U_{\eta} \cup P_1' \cup P_2'}$ contains~$\mathscr{C}^2_k$ as an \oddminor.
        If both of the~$P_i$ are of odd length, then~${U_{\eta} \cup P_1' \cup P_2'}$ contains~$\mathscr{C}^1_k$ as an \oddminor.
        And if both of the~$P_i$ are even, $H$ is bipartite, as desired. 
    \end{proofofclaim}

    So we may assume that~$\tilde{H}$ is a $2$-connected and bipartite graph. 
    By \Cref{lemma:oddear}, since $G$ is $2$-connected and non-bipartite, there is an odd $\tilde{H}$-ear~$Q$ (which can be found in linear time). 
    We denote the endpoints of~$Q$ by~$x$ and~$y$. 

    What follows is a case distinction on where exactly the vertices~$x$ and~$y$ lie in~$\tilde{H}$. 
    Before we dive into the case distinction, let us fix some conventions. 
    As~$W$ is a clean ${(2 (f_{\ref{lem:InsideOutWall}}(2k) + 4 ) + 1)}$-wall there exist four clean ${(f_{\ref{lem:InsideOutWall}}(2k) + 4)}$-subwalls~$W_i'$, ${i \in [4]}$ of~$W$ such that for each pair of distinct~${i,j \in [4]}$ it holds that~${W'_i \cap W_j' \subseteq C_i' \cap C_j'}$ where~$C_i'$ and~$C_j'$ denote the perimeters of~$W_i'$ and~$W_j'$ respectively. 
    It follows that~${W = \bigcup_{i \in [4]} W_i'}$. 
    For each~${i \in [4]}$, let~$W_i$ be the unique ${f_{\ref{lem:InsideOutWall}}(2k)}$-subwall of~$W_i'$ such that every brick of~$W_i$ is also a brick of~$W_i'$ and the central brick of~$W_i$ is the central brick of~$W_i'$. 
    Note that the~$W_i$ are pairwise vertex-disjoint and every path from a vertex of~$W_i$ to the perimeter of~$W_i'$ contains at least three branch vertices of~$W_i'$ which are not branch vertices of~$W_i$. 
    See \Cref{figure:wallquad} for an illustration. 

    Now clearly~${\{x,y\} \subseteq {\bigcup_{j \in [4] \setminus \{ i \}} V(W_j')} \cup V(P_1) \cup V(P_2)}$ for some~${i \in [4]}$. 
    Observe that there exist vertex-disjoint $W_i$-ears~$\hat{P}_1$ and~$\hat{P}_2$ in~$H$ such that ${\hat{H} \coloneqq W_i \cup \hat{P}_1 \cup \hat{P}_2}$ is a clean cross-wall of order~${f_{\ref{lem:InsideOutWall}}(2k)}$ and~${\{x,y\} \subseteq V(\hat{P}_1) \cup V(\hat{P}_2)}$. 
    We distinguish two cases. 
    
    \begin{figure}[ht]
        \begin{center}
            \includegraphics[scale=0.6]{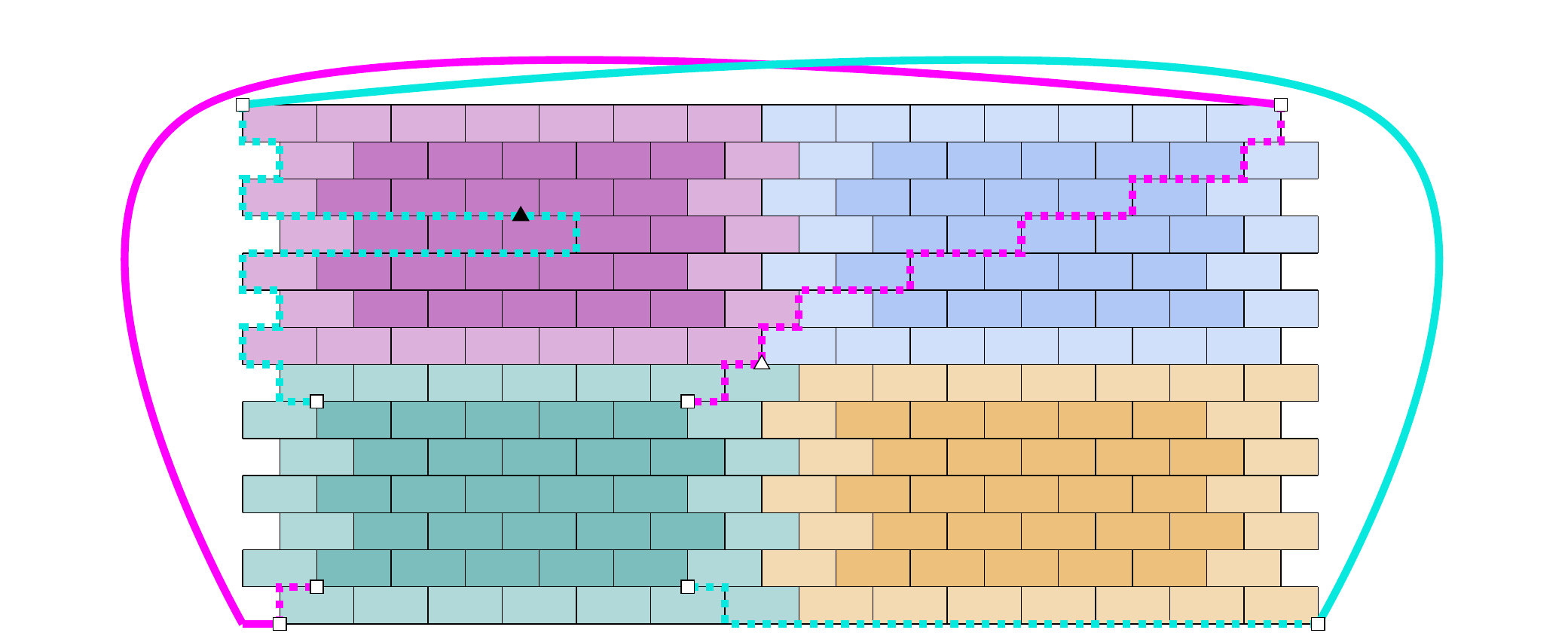}
        \end{center}
        \caption{Splitting up a wall into its four quadrants. Using the dashed paths, we can extend the crossing ears of~$W$ to crossing ears of one subwalls~$W_i$ while collecting the vertices~$x$ and~$y$, marked by triangles. }
        \label{figure:wallquad}
    \end{figure}

    \medskip
    
    \textbf{Case 1:} There exists~${j \in [2]}$ such that both~$x$ and~$y$ belong to $V(\hat{P}_j)$.
    \smallskip
    
    We may now replace~$\hat{P}_j$ by an odd $W_i$-ear~${Q' \subseteq \hat{P}_j \cup Q}$. 
    It follows that ${W_i \cup Q' \cup \hat{P}_{3-j}}$ is a non-bipartite clean cross-wall of order~${f_{\ref{lem:InsideOutWall}}(2k)}$. 
    Hence, this case concludes with the Claim. 
    \medskip

    \textbf{Case 2:} ${x \in V(\hat{P}_1)}$ and~${y \in V(\hat{P}_2)}$.
    \smallskip

    As in the proof of the \Cref{claim:bipartitecrossgrid}, 
    we can apply \Cref{lem:InsideOutWall} to find an odd $\mathscr{U}_k$-expansion in~${W_i \cup \hat{P}_1 \cup \hat{P}_2}$. 
    We will slightly modify this to an odd $U'$-expansion $\eta'$ of the graph~$U'$ obtained from the singly-crossing grid of order~$k$ by subdividing each of the two crossing edges exactly once. 
    Notice that any such $U'$-expansion $\eta'$ must assign both~$x$ and~$y$ to some branch sets and, in particular, these branch sets must be distinct. 
    Fix a $2$-colouring~$c_1$ of~$H$ and a $2$-colouring~$c_2$ of~$U'$.
    We choose~$\eta'$ in a way such that for the vertices~${w_x,w_y}$ with~${x \in V(\eta'(w_x))}$ and~${y \in V(\eta'(w_y))}$ it holds that~${c_2(w_x) = c_2(w_y)}$. 
    See \Cref{figure:wallc3} for an illustration. 

    \begin{figure}[ht]
        \begin{center}
            \includegraphics[scale=0.3]{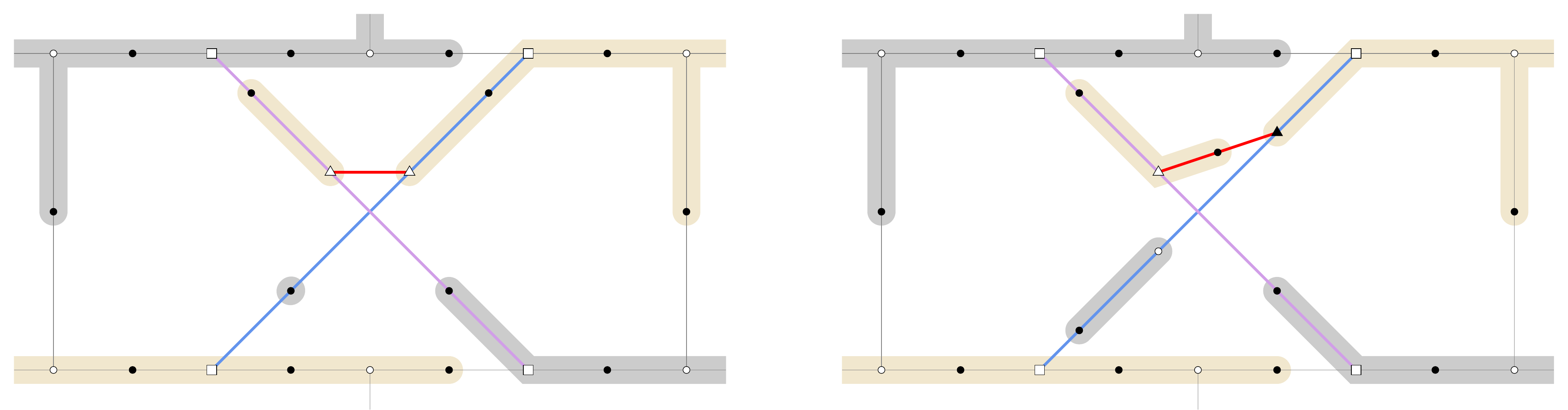}
        \end{center}
        \caption{The two ways how to extend the odd $\mathscr{G}_{2k}$-expansion in the central brick of a wall to an odd $\mathscr{C}_k^3$-expansion.  Inverting the colourings of the grey branch sets yields that this expansion is odd. }
        \label{figure:wallc3}
    \end{figure}

    If~$x$ and~$y$ belong to the same colour class under~$c_1$, as~$Q$ is parity breaking it must be of odd length and thus we may further augment~$\eta'$ to obtain an odd $\mathscr{C}^3_k$-expansion as depicted in the left of \Cref{figure:wallc3}. 

    So assume that~$x$ and~$y$ belong to different colour classes under~$c_1$.
    Moreover, in this case~$Q$ must be of even length and thus, we may further augment~$\eta'$ to obtain an odd $\mathscr{C}^3_k$-expansion. 
    This is illustrated on the right of \Cref{figure:wallc3}. 
\end{proof}

With this we are ready for the proof of \Cref{thm:singleoddcrossstructure}. 
As before, we prove a slightly stronger version.

\begin{theorem}\label{thm:singleoddcrossstructure-actual}
    It holds that $({\mathcal{B} \cup \mathcal{P})\text{-}\mathsf{blind}\text{-}\mathsf{tw} \sim \mathsf{big}_{\mathfrak{C}}}$. 
    In particular, there exist computable functions ${f_{\ref{thm:singleoddcrossstructure-actual}},g_{\ref{thm:singleoddcrossstructure-actual}}\colon\mathbb{N}\to\mathbb{N}}$ 
    and an algorithm that, given a positive integer~$k$ and a graph~$G$ as inputs, 
    in time $\mathcal{O}(g_{\ref{thm:singleoddcrossstructure-actual}}(k) \abs{V(G)}^3 )$, 
    finds either
    \begin{enumerate}
        \item an odd expansion of a singly parity-crossing grid of order~$k$, or
        \item a tree-decomposition~${(T,\beta)}$ for~$G$ of $(\mathcal{B} \cup \mathcal{P})$-blind-width~$f_{\ref{thm:singleoddcrossstructure-actual}}(k)$, where, in particular, for every node~${t \in V(T)}$ with~${\abs{\beta(t)} > f_{\ref{thm:singleoddcrossstructure-actual}}(k)}$, we have that either 
            \begin{itemize}
                \item $G[\beta(t)]$ a bipartite block of~$G$ and the adhesion of~$t$ is at most one, or
                \item the torso on~$t$ is a planar quasi-$4$-component of~$G$ and the adhesion of~$t$ is at most~$3$. 
            \end{itemize}
    \end{enumerate}
\end{theorem}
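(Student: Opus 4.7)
The plan is to follow the same two-step blueprint as the proof of \Cref{thm:singleoddstructure-actual}, but refined to accommodate the planar pieces in addition to the bipartite ones. The parametric upper bound $\mathsf{big}_{\mathfrak{C}} \precsim (\mathcal{B} \cup \mathcal{P})\text{-}\mathsf{blind}\text{-}\mathsf{tw}$ is already supplied by \Cref{cor:bpb-f(BP-tw)}, so what remains is to construct, in the advertised time, either an odd singly parity-crossing grid \oddminor{} of order $k$ in $G$, or a tree-decomposition of $(\mathcal{B}\cup\mathcal{P})$-blind-width bounded in $k$ of the claimed shape.

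First I would compute the block-cut tree of $G$ by \Cref{prop:cutvertextree} in linear time, obtaining a tree-decomposition $(T_0,\beta_0)$ of adhesion one whose bags are exactly the blocks of $G$. Any block whose induced subgraph is bipartite is kept as a single bag; such a bag satisfies $(G,\beta(t))\in\mathcal{B}$ and has adhesion at most one, realising the first option in case (2) of the theorem.

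Second, on every non-bipartite block $B$ I would invoke \Cref{prop:singlecrossing} with parameter $k':=f_{\ref{lem:find-spc}}(k)$. If the algorithm returns case (1), a $\mathscr{U}_{k'}$-expansion in $B$, then since $B$ is $2$-connected and non-bipartite I can apply \Cref{lem:find-spc} to this expansion and extract an odd $\mathscr{C}_k^i$-expansion for some $i\in[3]$ living in $B\subseteq G$, yielding outcome (1) of the theorem. Otherwise \Cref{prop:singlecrossing} returns case (2): a tree-decomposition of $B$ of $\mathcal{P}$-blind-width at most $f_{\ref{prop:singlecrossing}}(k')$ in which every large bag has a planar quasi-$4$-component torso of adhesion at most three. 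I would then splice each such block-level decomposition into $(T_0,\beta_0)$ by attaching it at the node of $T_0$ corresponding to $B$, identifying along a bag containing the at-most-one cut-vertex of attachment. The resulting global tree-decomposition has $(\mathcal{B}\cup\mathcal{P})$-blind-width at most $f_{\ref{thm:singleoddcrossstructure-actual}}(k):=\max\{f_{\ref{prop:singlecrossing}}(k'),\,2\}$, and its large bags have exactly one of the two prescribed shapes.

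The asymptotic equivalence $(\mathcal{B}\cup\mathcal{P})\text{-}\mathsf{blind}\text{-}\mathsf{tw}\sim\mathsf{big}_{\mathfrak{C}}$ then follows by combining this algorithm with \Cref{cor:bpb-f(BP-tw)}. The running time is dominated by the $\mathcal{O}(g_{\ref{prop:singlecrossing}}(k')\abs{V(B)}^3)$ invocations on each block; since the blocks partition $E(G)$ and share only single cut-vertices, these costs sum to $\mathcal{O}(g_{\ref{prop:singlecrossing}}(k')\abs{V(G)}^3)$ overall, absorbing the linear block-cut tree computation and the $\mathcal{O}(g_{\ref{lem:find-spc}}(k)\abs{V(G)}\log\abs{V(G)})$ cost of \Cref{lem:find-spc}. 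The main point to verify carefully is that the $\mathscr{U}_{k'}$-expansion returned by \Cref{prop:singlecrossing} applied to the block $B$ genuinely lies inside $B$ (and hence inside $G$), rather than in a torso, so that the odd expansion produced by \Cref{lem:find-spc} is a true \oddminor{} of $G$; once this is granted, preservation of the adhesion bounds (one for bipartite-block bags, three for planar torso bags) under the splicing step is immediate from the structure of $(T_0,\beta_0)$ and of the decompositions produced by \Cref{prop:singlecrossing}.
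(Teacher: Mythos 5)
Your proposal follows the paper's proof essentially verbatim: block–cut tree via \Cref{prop:cutvertextree}, keep bipartite blocks as single bags, invoke \Cref{prop:singlecrossing} on every non-bipartite block with parameter $f_{\ref{lem:find-spc}}(k)$, pass a returned $\mathscr{U}$-expansion to \Cref{lem:find-spc}, otherwise splice the block-level decompositions back into the block–cut tree, and conclude asymptotic equivalence from \Cref{cor:bpb-f(BP-tw)}. The point you flag (that the expansion returned by \Cref{prop:singlecrossing} on a block genuinely lies in $G$) is harmless since that proposition is applied to the actual block graph rather than a torso; your argument is correct and coincides with the paper's.
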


\begin{proof}
    Let~$f_{\ref{lem:find-spc}}$ and~$f_{\ref{prop:singlecrossing}}$ be the functions from \Cref{lem:find-spc} and \Cref{prop:singlecrossing}, respectively, and let~${f_{\ref{thm:singleoddcrossstructure-actual}}(k) \coloneqq f_{\ref{prop:singlecrossing}}(f_{\ref{lem:find-spc}}(k))}$. 
    Recall Observation~\ref{obs:blind-tw-blocks}. 
    We apply \Cref{prop:cutvertextree}. 
    To each non-bipartite block, we apply \Cref{prop:singlecrossing} to either find an~$\mathscr{U}_{f_{\ref{lem:find-spc}}(k)}$-expansion in a non-bipartite $2$-connected graph, or a tree-decomposition with the properties as in \Cref{prop:singlecrossing}. 
    If for some block we are in the first case, we get an odd $\mathscr{C}^{i}_k$-expansion from \Cref{lem:find-spc}. 
    Otherwise, we combine the tree-decomposition from \Cref{prop:cutvertextree} with the tree-decompositions given by \Cref{prop:singlecrossing}. 
    Now every bag of size larger than~$f_{\ref{thm:singleoddcrossstructure-actual}}(k)$ is either a bipartite block of~$G$ (and hence~$(G,\beta(t)) \in \mathcal{B}$), or its torso is a planar quasi-$4$-component (and hence~$(G,\beta(t)) \in \mathcal{P}$). 
    Thus the $(\mathcal{B}\cup\mathcal{P})$-blind-width of this tree-decomposition is at most~$f_{\ref{thm:singleoddcrossstructure-actual}}(k)$, as desired. 

    With \Cref{cor:bpb-f(BP-tw)}, the asymptotic equivalence of the parameters follows. 
\end{proof}

\section{%
\texorpdfstring{\textsc{Maximum Independent Set} on graphs of bounded $\mathcal{B}$-blind-treewidth}{Maximum Independent Set on graphs of bounded B-blind-treewidth}}
\label{sec:maxindependentset}

We solve the more general \textsc{Maximum Weight Independent Set} problem on vertex-weighted graphs $(G,w)$ where~$G$ is of bounded $\mathcal{B}$-blind-treewidth. 
A \emph{vertex-weighted graph}~${(G,w)}$ consists of a graph~$G$ and a function~${w \colon V(G) \to \mathbb{N}}$. 
If~${H \subseteq G}$ we denote by~${(H,w)}$ the \emph{vertex-weighted subgraph} of~${(G,w)}$. 
Then, using the constant-1 weighting~$\mathbf{1}_{V(G)}$ implies~\Cref{thm:maxindependentset}. 
The only change for the weighted version that might occur is that the coding length of the integers in the image of~$w$ has to be taken into account for the running time. 

In the following, we will use that fact that instead of \textsc{Maximum Weight Independent Set} one can solve \textsc{Minimum Weight Vertex Cover} which famously reduces to network flows on bipartite graphs. 
By using Dinic's algorithm~\cite{dinic1970} we obtain a running time of~$\mathcal{O}(n^3)$ for vertex-weighted bipartite $n$-vertex graphs where the total weight is in~$\mathcal{O}(n)$. 

\begin{proposition}[follows from~\cite{dinic1970}]\label{prop:bipartiteindependentset}
    There exists an algorithm that, given a vertex-weighted bipartite graph~$(G,w)$ with $\sum_{v\in V(G)}w(v)\in \mathcal{O}(\abs{V(G)})$ as an input, 
    in time $\mathcal{O}(\abs{V(G)}^3)$, 
    computes a maximum weight independent set of~$(G,w)$. 
\end{proposition}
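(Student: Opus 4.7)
The plan is to reduce the problem to \textsc{Minimum Weight $s$-$t$ Cut} in an auxiliary bipartite flow network and invoke Dinic's algorithm. The starting observation is that $V(G) \setminus I$ is a vertex cover whenever $I$ is an independent set, so a maximum weight independent set corresponds bijectively to a minimum weight vertex cover via complementation. It therefore suffices to compute such a cover within the target time bound and to return its complement.

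To compute the minimum weight vertex cover I would use the standard König--Egerváry flow construction. Given the bipartition $V(G) = A \dcup B$, build an auxiliary network $N$ with a new source $s$ and sink $t$, adding an arc $s \to a$ of capacity $w(a)$ for each $a \in A$, an arc $b \to t$ of capacity $w(b)$ for each $b \in B$, and an arc $a \to b$ of capacity $\infty$ for each edge $\{a,b\} \in E(G)$. Any finite $s$-$t$ cut saturates only source and sink arcs, and the set of their endpoints in $V(G)$ forms a vertex cover whose weight equals the cut capacity; conversely, every vertex cover yields a finite cut of the same weight. By the max-flow min-cut theorem, a maximum $s$-$t$ flow therefore identifies, via the reachability structure of its residual graph, a minimum weight vertex cover in linear post-processing time, and its complement in $V(G)$ is the desired maximum weight independent set.

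For the running time, I would apply Dinic's algorithm to $N$. The network has $\abs{V(G)} + 2$ vertices and $\mathcal{O}(\abs{V(G)}^2)$ arcs, and since $\sum_{v \in V(G)} w(v) = \mathcal{O}(\abs{V(G)})$, the maximum flow value is also $\mathcal{O}(\abs{V(G)})$. Under these assumptions one obtains an $\mathcal{O}(\abs{V(G)}^3)$ bound for Dinic's algorithm; for instance, by first replacing each weighted vertex by $w(v)$ unit-capacity copies to reduce to an unweighted bipartite matching instance with $\mathcal{O}(\abs{V(G)})$ vertices and $\mathcal{O}(\abs{V(G)}^2)$ edges, and then invoking the $\mathcal{O}(E\sqrt{V})$ Hopcroft--Karp-style analysis of Dinic's algorithm on unit-capacity bipartite networks.

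The only step that requires real care is this running time analysis: the generic $\mathcal{O}(V^2 E)$ bound for Dinic's on an integer-capacity network of size $\abs{V(G)}$ with $\mathcal{O}(\abs{V(G)}^2)$ arcs yields only $\mathcal{O}(\abs{V(G)}^4)$, which is too weak, so the hypothesis $\sum_{v} w(v) = \mathcal{O}(\abs{V(G)})$ must be exploited explicitly, either via the vertex-splitting reduction above or by the standard blocking-flow analysis that bounds the number of phases by a function of the maximum flow value. Modulo this, the correctness is immediate from König--Egerváry and max-flow min-cut.
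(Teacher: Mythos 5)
Your proposal is correct and follows essentially the same route the paper intends: reduce maximum weight independent set to minimum weight vertex cover by complementation, realise the latter as a minimum $s$--$t$ cut in the standard K\H{o}nig--Egerv\'ary flow network with vertex weights as source/sink arc capacities, and solve it with Dinic's algorithm using the hypothesis $\sum_v w(v) = \mathcal{O}(\abs{V(G)})$ to bound the work. Your care about the running time is warranted and your fix works; one can also be even more elementary and note that the maximum flow value is $\mathcal{O}(\abs{V(G)})$, so a plain augmenting-path bound of $\mathcal{O}(\text{flow}\cdot\abs{E(G)})=\mathcal{O}(\abs{V(G)}^3)$ already suffices without the Hopcroft--Karp-style phase count.
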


While \Cref{prop:bipartiteindependentset} allows us to deal with bipartite blocks, to deal with non-bipartite blocks, we also need the following. 

\begin{proposition}[see for example~\cite{NiedermeierBook}]
    \label{prop:twindependentset}
    There exists an algorithm that, given a positive integer~$k$ and a vertex-weighted graph~${(G,w)}$ with~${\sum_{v \in V(G)} w(v) \in \mathcal{O}(\abs{V(G)})}$ of treewidth at most~$k$ as inputs, 
    in time~${2^{\mathcal{O}(k)}\abs{V(G)}}$, 
    computes a maximum weight independent set of~$(G,w)$. 
\end{proposition}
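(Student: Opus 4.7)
The plan is to apply the standard dynamic programming paradigm on a nice tree-decomposition. First, I would invoke \Cref{prop:treewidth} to compute, in time $\mathcal{O}(g_{\ref{prop:treewidth}}(k) \abs{V(G)})$, a tree-decomposition $(T,\beta)$ of $G$ of width at most $k$. Then I would convert $(T,\beta)$ into a rooted nice tree-decomposition of width at most $k$ with $\mathcal{O}(k \abs{V(G)})$ nodes in time $\mathcal{O}(k^2 \abs{V(G)})$, so that every non-root node is of one of the standard types: leaf (empty bag), introduce-vertex, forget-vertex, or join.

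Next, for each node $t$ of $T$, let $V_t$ denote the union of bags in the subtree rooted at $t$. For every subset $S \subseteq \beta(t)$ which is independent in $G[\beta(t)]$, define the table entry
\begin{equation*}
    A_t[S] \coloneqq \max \Bigl\{ \textstyle\sum_{v \in I} w(v) \Bigm| I \subseteq V_t \text{ is independent in } G[V_t] \text{ and } I \cap \beta(t) = S \Bigr\},
\end{equation*}
setting $A_t[S] = -\infty$ if no such $I$ exists. The key observation underlying the DP is that the adhesion set $\beta(t) \cap \beta(\mathrm{parent}(t))$ separates $V_t \setminus \beta(t)$ from the rest of $G$ in $G$, and the interaction of any global independent set with $V_t$ is captured by its trace on $\beta(t)$.

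The transitions are the standard ones: for an introduce-vertex node adding $v$, $A_t[S] = A_{t'}[S \setminus \{v\}] + w(v)$ if $v \in S$ (and $S$ remains independent), otherwise $A_t[S] = A_{t'}[S]$; for a forget-vertex node removing $v$, $A_t[S] = \max(A_{t'}[S], A_{t'}[S \cup \{v\}])$; for a join node with children $t_1, t_2$ and $\beta(t) = \beta(t_1) = \beta(t_2)$, $A_t[S] = A_{t_1}[S] + A_{t_2}[S] - w(S)$. Each node requires computation over at most $2^{k+1}$ subsets of its bag, each in time $\mathcal{O}(k)$ (and the join transition is likewise linear in $k$ per entry), yielding a total running time of $2^{\mathcal{O}(k)} \abs{V(G)}$. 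The answer is read off at the root as $\max_S A_r[S]$, and the optimal independent set itself can be recovered by a standard backtracking pass. The encoding of weights contributes only an $\mathcal{O}(\log W)$ factor per arithmetic operation, which under the assumption $\sum_v w(v) \in \mathcal{O}(\abs{V(G)})$ is absorbed into the $\mathcal{O}(\abs{V(G)})$ factor. I do not anticipate a serious obstacle here; the only points requiring care are ensuring correctness of the join transition (avoiding double-counting vertices of $S$) and maintaining the linear dependence on $\abs{V(G)}$ through the conversion to a nice decomposition.
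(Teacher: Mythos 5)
The paper does not prove this proposition—it cites it as a standard result from Niedermeier's textbook—and your proposal is exactly the standard dynamic-programming argument over a nice tree-decomposition that the reference contains. Your transitions, table definition, and running-time accounting are all correct, so the proposal matches the intended (cited) proof.
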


Given a rooted tree-decomposition~${\mathcal{T} = (T,r,\beta)}$ we denote by~$\mathcal{T}_t$ the rooted tree-decomposition ${(T_t,t,\beta_t)}$ where~${t \in V(T)}$, $T_t$ is the subtree of~$T$ rooted at~$t$, and~$\beta_r$ is the restriction of~$\beta$ to~$T_t$. 
Moreover, we denote by~${(G_{T_t},w)}$ the vertex-weighted subgraph of~${(G,w)}$ induced by the vertex set~${\bigcup_{d \in V(T_t)} \beta(d)}$ and by~${(G_t,w)}$ the vertex-weighted subgraph of~${(G,w)}$ induced by~${\beta(t)}$. 

Without loss of generality, let us assume that~$G$ is connected. 
Let~${(T,\beta)}$ be the tree-decomposition from \Cref{prop:cutvertextree} displaying the blocks of~$G$. 
As stated in \Cref{prop:cutvertextree}, ${(T,\beta)}$ can be found in linear time.
We pick an arbitrary root~${r \in V(T)}$ and treat~${(T,\beta)}$ as a rooted tree-decomposition~${\mathcal{T} = (T,r,\beta)}$. 
Let~${t \in V(T) \setminus \{ r \}}$ be any non-root vertex and let~${d \in V(T)}$ be the unique neighbour of~$t$ which is closer to~$r$ in~$T$ than~$t$. 
Then there exists a unique vertex~$v_t$ such that~${\{ v_t \} = \beta(t) \cap \beta(d)}$.
The table for our dynamic programming on~$\mathcal{T}$ will consist of exactly two numbers for every~${t \in V(T) \setminus \{ r \}}$, namely
\begin{itemize}
    \item $\mathbf{in}_t$, the maximum weight of an independent set of $(G_{T_t} - N_{G_{T_t}}[v_t],w)$\footnote{$N_G[v]$ denotes the \emph{closed neighbourhood} of~$v$ in~$G$, that is $N_G[v]=\{ v\}\cup N_G(v)$.} and 
    \item $\mathbf{out}_t$, the maximum weight of an independent set of $(G_{T_t}-v_t,w)$.
\end{itemize}
Notice that~$\mathbf{in}_t+w(v_t)$ is the maximum weight of an independent set of~${(G_{T_t},w)}$ that contains~$v_t$ while~$\mathbf{out}_t$ is the maximum weight of an independent set of~${(G_{T_t},w)}$ that avoids~$v_t$. 

Finally, we need to describe an auxiliary graph that will allow us to merge the tables. 
Assume we are given some~${t \in V(T)}$ let~${d_1, \dots, d_{\ell}}$ be its successors in~$T$ and suppose that we have already computed the values~$\mathbf{in}_{d_i}$ and~$\mathbf{out}_{d_i}$ for all~${i \in [\ell]}$. 
Let~${(G_t^+,w_t^+)}$ be the vertex-weighted graph obtained from~$G_t$ as follows.
For each~${i \in [\ell]}$ introduce a new vertex~$x_i$ adjacent to the vertex~$v_{d_i}$ in~$G_t$.
We set~${w_t^+(x_i) \coloneqq \mathbf{out}_{d_i}}$. 
Let~${A \subseteq \beta(t)}$ be the set of all vertices~$a$ such that there exists~${i \in [\ell]}$ where~${a = v_{d_i}}$. 
Given some~${a \in A}$ let~${I_a \subseteq [\ell]}$ be the set of all~${i \in [\ell]}$ such that~${a = v_{d_i}}$. 
We set~${w_t^+(a) \coloneqq w(a) + \sum_{i \in I_a} \mathbf{in}_{d_i}}$.
For all vertices~$v$ in~${\beta(t) \setminus A}$ we set~${w_t^+(v) \coloneqq w(v)}$.
Notice that, if~${\sum_{v \in V(G)} w(v) \in \mathcal{O}(\abs{V(G)})}$, then also~${\sum_{v \in V(G_t^+)} w_t^+(v) \in \mathcal{O}(\abs{V(G)})}$.

Now, everything is in place for the proof of \cref{thm:maxindependentset}. 

\begin{proof}[Proof of \Cref{thm:maxindependentset}]
    Let us first observe that, to find the decomposition as claimed in the assertion, we may first call the algorithm from \Cref{prop:cutvertextree} to obtain, in time~${\mathcal{O}(\abs{V(G)}+\abs{E(G)})}$, a rooted tree-decomposition~${(T,r,\beta)}$ for~$G$ that displays its blocks by selecting some arbitrary root. 
    
    By \Cref{lem:find-spb} and \Cref{cor:bpb-f(B-tw)}, every non-bipartite block has treewidth at most~$f(k)$, where~$f$ denotes the function from \Cref{lem:find-spb}. 
    
    Let~$t$ be a leaf of~$T$.
    In this case there is no difference between~${(G_t,w)}$ and~${(G_t^+,w_t^+)}$ and thus we may use either \Cref{prop:bipartiteindependentset} or \Cref{prop:twindependentset}, depending on whether~$G_t$ is bipartite or of bounded treewidth, to compute the values~$\mathbf{in}_t$ and~$\mathbf{out}_{t}$.
    
    So we may assume~$t$ to not be a leaf and assume that the values~$\mathbf{in}_d$ and~$\mathbf{out}_d$ are given for all~${d \in V(T_t) \setminus \{ t \}}$. 
    In case~${t = r}$ let~$v_r$ be an arbitrary vertex of~$\beta(r)$, otherwise~$v_t$ is the vertex as defined above. 
    
    Now consider the vertex-weighted graph~${(G^+_t,w_t^+)}$ and observe that, if~$G_t$ is bipartite, then so is~$G_t^+$.
    Moreover, the treewidth of~$G_t^+$ equals the treewidth of~$G_t$. 
    Let~${d_1,\dots, d_{\ell}}$ denote the successors of~$t$ in~$T$ and let~${I_t \subseteq [\ell]}$ be the set of all~${i \in [\ell]}$ such that~${v_t = v_{d_i}}$. 
    To compute the value~$\mathbf{out}_t$ simply compute a maximum weight independent set of~${G_t^+ - v_t}$ by using \Cref{prop:bipartiteindependentset} or \Cref{prop:twindependentset}. 
    To compute the value~$\mathbf{in}_t$ we first compute the weight~$z$ of a maximum weighted independent set of~${G_t^+ - N_{G_t^+}[v_t]}$ as before and then set~${\mathbf{in}_t \coloneqq z + \sum_{i \in I_t} \mathbf{in}_{d_i}}$.
    
    We now have to check that the values~$\mathbf{in}_t$ and~$\mathbf{out}_t$ have been computed correctly.
    Let~$J_{\mathsf{in}}^+$ be a maximum weight independent set of~${(G^+_t - N_{G^+_t}[v_t], w^+)}$ and~${J^+_{\mathsf{out}}}$ be a maximum weight independent set of~${(G^+_t - v_t, w^+)}$. 
    Moreover, let~$J_{\mathsf{in}}$ be a maximum weight independent set of~${(G_{T_t} - N_{G}[v_t], w)}$ and~$J_{\mathsf{out}}$ be a maximum weight independent set of~${(G_{T_t} - N_{G}[v_t], w)}$. 
    We will only discuss the case of~$J_{\mathsf{in}}$ as the case of~$J_{\mathsf{out}}$ can be handled analogously. 
    
    Let~$W$ be the set obtained from~$G$ by taking the union of 
    \vspace{-6pt}
    \begin{itemize}
        \item the set~$J_{\mathsf{in}}^+\cap V(G)$, \item for each~${i \in [\ell] \setminus I_t}$, if~${v_{d_i} \in J_{\mathsf{in}}^+ \cap V(G)}$ then a maximum weight independent set~$A_i$ of~${(G_{T_{d_i}} -N_{G}[v_{d_i}], w)}$ and if~${x_i \in J_{\mathsf{in}}^+ \cap V(G)}$ then a maximum weight independent set~$A_i$ of~${(G_{T_{d_i}} - v_{d_i}, w)}$, and
        \item for every~${i \in I_t}$, a maximum weight independent set~$A_i$ of~${(G_{T_{d_i}} - N_{G}[v_t], w)}$. 
    \end{itemize}
    \vspace{-6pt}
    It follows that~$W$ is an independent set in~${G_{T_t} - N_{G}[v_t]}$.
    Hence, ${w(W) \leq w(J_{\mathsf{in}})}$.
    Moreover, ${w(W) = w^+(J_{\mathsf{in}})}$ and thus, we obtain~${w^+(J_{\mathsf{in}}) \leq w(J_{\mathsf{in}})}$. 
    
    Note that~$J_{\mathsf{in}}$ can also be partitioned into independent sets~$B_i$ of~${G_{T_{d_i}} - \beta(t)}$ for each~${i \in [\ell]}$ and an independent set~$S$ of~$G_t$. 
    We construct a set~$W^+$ as the union of the set~$S$, and, for each~${i \in [\ell]}$ where~${v_{d_i} \notin B_i}$, the set~${\{ x_i \}}$. 
    Observe that~${v_{d_i} \in S}$ for all other~${i \in [\ell]}$.
    Moreover, ${W^+ \setminus N_{G_t^+}[v_t]}$ is an independent set of~$G_t^+$.
    Since~${w(J_{\mathsf{in}}) = w^+(W^+)}$, it follows from the choice of~$J_{\mathsf{in}}^+$ that
    \begin{align*}
        w^+(W^+) \leq w^+(J_{\mathsf{in}}^+) + \sum_{i \in I_t} w^+(x_i). 
    \end{align*}
    So finally, we obtain~${w(J_{\mathsf{in}}) = \mathbf{in}_t}$. 
\end{proof}

\section{
\texorpdfstring{\textsc{Maximum Cut} on graphs of bounded $(\mathcal{B}\cup\mathcal{P})$-blind-treewidth}{Maximum Cut on graphs of bounded BuP-blind-treewidth}}
\label{sec:maxcut}

Similar to our approach for \textsc{Maximum Independent Set} we prove \Cref{thm:maxcut} by proving a more general form for edge-weighted graphs. 
An \emph{edge-weighted graph}~$(G,w)$ consists of a graph and a function~${w \colon E(G) \to \mathbb{N}}$. 
If~${H \subseteq G}$ we denote by~$(H,w)$ the \emph{edge-weighted subgraph} if~$(G,w)$. 
For a set of edges~$w(F)$, we denote by~$w(F)$ the total weight~${\sum_{e \in F} w(e)}$. 

Given~${X \subseteq V(G)}$ we denote the set~${\{ e \in E(G) \mid \abs{e \cap X} = 1 \}}$ by~${\partial_G(X)}$. 
A set~${F \subseteq E(G)}$ is a \emph{cut} of~$G$ if there exists a non-empty proper subset~${X \subseteq E(G)}$ where~${F = \partial_G(X)}$. 
A \emph{maximum weight cut} of an edge-weighted graph~${(G,w)}$ is a cut of maximum total weight.

Let~$p$ be a polynomial. 
An edge-weighted graph~$(G,w)$ is said to be \emph{$p$-bounded} if~${\max\{ w(e) \mid e\in E(G)\} \leq p(\abs{V(G)})}$. 

\begin{theorem}
    \label{thm:maxweightcut}
    There exists a computable function~$g_{\ref{thm:maxweightcut}}$ and an algorithm that, 
    given a positive integer~$k$, a polynomial~$p$, a $p$-bounded edge-weighted graph~${(G,w)}$, and a tree-decomposition of~${(\mathcal{B} \cup \mathcal{P})}$-blind-width~$k$ for~$G$ as inputs, 
    in time ${\mathcal{O}(g_{\ref{thm:maxweightcut}}(k) \abs{V(G)}^{\nicefrac{9}{2}} p(\abs{V(G)}))}$, 
    finds a maximum weight cut of a $p$-bounded edge-weighted graph~${(G,w)}$. 
\end{theorem}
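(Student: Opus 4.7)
The plan is to reduce to the case where the input tree-decomposition~$(T,\beta)$ is ``nice'', meaning that every adhesion at a large bag has size at most~$3$, and then run bottom-up dynamic programming. Since an input decomposition of $(\mathcal{B}\cup\mathcal{P})$-blind-width~$k$ certifies $\mathsf{big}_{\mathfrak{C}}(G) \leq (k-1)/2 + 2$ by \Cref{cor:bpb-f(BP-tw)}, invoking \Cref{thm:singleoddcrossstructure-actual} with this parameter refines $(T,\beta)$ into a nice tree-decomposition of $(\mathcal{B}\cup\mathcal{P})$-blind-width $f(k)$ in time $\mathcal{O}(g(k)|V(G)|^3)$. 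Rooting the refined decomposition and proceeding bottom-up, for each node~$t$ with parent-adhesion $S_t$ of size at most $f(k)$ I would maintain a table $M[t,\cdot]$ indexed by colourings $\sigma \colon S_t \to \{0,1\}$ whose value $M[t,\sigma]$ is the maximum weight of a cut of the subgraph induced by $\bigcup_{d \in V(T_t)} \beta(d)$ whose restriction to $S_t$ equals $\sigma$. The recursion
\[
    M[t,\sigma] \;=\; \max_{\tau \supseteq \sigma} \Bigl( \mathrm{cut}_t(\tau) + \sum_{d \text{ child of } t} M\bigl[d,\, \tau \restricted S_d\bigr] \Bigr),
\]
where $\mathrm{cut}_t(\tau)$ is the weight of edges of $G[\beta(t)]$ separated by $\tau$ that are not yet accounted for by a descendant bag, then drives the computation and the final answer is $\max_\sigma M[r,\sigma]$ at the root.

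If $|\beta(t)| \leq f(k)$ I would perform the maximisation by brute force over all $2^{f(k)}$ extensions of $\sigma$ to $\beta(t)$. If $\beta(t)$ is a large bipartite bag ($(G,\beta(t)) \in \mathcal{B}$), then $G[\beta(t)]$ is bipartite---any odd cycle of $G[\beta(t)]$ would meet the globally bipartite set $\beta(t)$ in at least three vertices, contradicting $(G,\beta(t)) \in \mathcal{B}$---and both $S_t$ and every child-adhesion $S_d$ are singletons. The children's tables therefore merge vertex by vertex into an external field $\phi_v \colon \{0,1\} \to \mathbb{N}$ at each $v \in \beta(t)$, with the parent-adhesion constraint imposed by a sufficiently large one-sided bonus. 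Maximising
\[
    \sum_{uv \in E(G[\beta(t)])} w(uv)\,\mathds{1}[\tau(u)\neq\tau(v)] \;+\; \sum_{v \in \beta(t)} \phi_v(\tau(v))
\]
then reduces, after relabelling one side of the bipartition of $G[\beta(t)]$, to a weighted minimum $s$-$t$ cut solvable by Dinic's algorithm in time $\mathcal{O}(n_t^3\, p(|V(G)|))$, in parallel to how \Cref{prop:bipartiteindependentset} is used in \Cref{sec:maxindependentset}.

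If $\beta(t)$ is a large planar bag ($(G,\beta(t)) \in \mathcal{P}$), I would enumerate the at most eight colourings $\sigma$ of the parent-adhesion and, for each, encode the child contributions into an augmented planar graph. Every child-adhesion $S_d$ is a clique of size at most three in the planar torso on~$t$ and therefore lies on a common face of a planar embedding; into that face I would insert a constant-size planar widget whose maximum cut, viewed as a function of the colouring of $S_d$, reproduces $M[d,\cdot]$ up to a $\sigma$-independent constant. Hadlock's algorithm for planar maximum cut, via its reduction to minimum weight perfect matching in a graph built from the planar dual (compare~\cite{Kaminski2012}), then returns the optimum of the augmented graph in time $\mathcal{O}(n_t^{\nicefrac{9}{2}} p(|V(G)|))$. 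Superadditivity of $x \mapsto x^{\nicefrac{9}{2}}$ combined with $\sum_t n_t \leq |V(G)|$ keeps the sum of the per-bag running times within the announced bound $\mathcal{O}(g(k)\,|V(G)|^{\nicefrac{9}{2}}\, p(|V(G)|))$.

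The main obstacle is the planar widget construction for child-adhesions of size three. One must show that for every nonnegative function $f \colon \{0,1\}^3 \to \mathbb{N}$ on three vertices lying on a common face, there is a planar gadget of constant size, with nonnegative weights bounded polynomially in $\|w\|_\infty$, whose maximum cut as a function of the colouring of those three vertices realises~$f$ up to an additive constant. The singleton and pairwise components of~$f$ are absorbed into external fields and weighted edges among the three boundary vertices, so the effort reduces to realising the genuinely cubic degree of freedom; this is to be done by a small number of auxiliary vertices placed inside the bounding face together with a careful choice of edge weights, and verifying that every $f$ is expressible in this way with nonnegative weights is the key technical lemma behind the planar case.
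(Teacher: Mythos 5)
Your approach takes a genuinely different route from the paper's, and it contains a gap that you yourself flag. The paper's argument is far simpler and avoids dynamic programming entirely. The key observation, recorded in \cref{lem:cutInBlocks}, is that an edge set~$F$ is a cut of~$G$ if and only if its restriction to every block is a cut of that block: an edge set is a cut precisely when it meets every cycle in an even number of edges, and every cycle of~$G$ lies inside a single block, so the cut condition localises block by block. A maximum weight cut of~$G$ is therefore just the disjoint union of maximum weight cuts of its blocks, and cut vertices impose no coupling at all. The entire edge set of a bipartite block is already a cut (\cref{obs:bipartitecut}), and by \cref{lem:find-spc} combined with \cref{cor:bpb-f(BP-tw)} each non-bipartite block excludes~$\mathscr{U}_{f(k)}$ as a minor for a suitable computable~$f$, so \cref{prop:maxcutsinglecrossing} applies. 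No merging across adhesion sets is required; this contrasts with the independent-set algorithm of \cref{sec:maxindependentset}, where a cut vertex's membership in the solution genuinely couples both incident blocks and DP is unavoidable.

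The gap in your proposal is exactly where you locate it: the planar gadget construction. You need, for every adhesion set of up to three vertices lying on a common face of the planar torso, a constant-size planar widget with nonnegative weights whose maximum cut, viewed as a function of the three boundary colours, realises an arbitrary prescribed (globally flip-symmetric) table up to an additive constant. You offer neither a construction nor an argument that such a widget exists, and nonnegativity of edge weights genuinely restricts the boundary functions that a max-cut gadget can produce, so the widget lemma is not obviously true and would require a careful realisability argument. The deeper issue is that the whole refinement-plus-DP machinery is unnecessary once one notices that cuts decompose over the block tree: that single observation makes the problem split into independent per-block instances and renders the widget, the min~$s$-$t$ cut reduction for bipartite bags, and all the adhesion bookkeeping superfluous.
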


\Cref{thm:maxcut} follows by applying this to~${(G,\mathbf{1}_{E(G)})}$ where~$p$ is constant. 

As before, we require subroutines for bipartite graphs and graphs that exclude a single-crossing grid. 

\begin{observation}
    \label{obs:bipartitecut}
    The edge set of a bipartite graph is a cut. 
\end{observation}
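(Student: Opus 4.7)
The plan is to unpack the definition of a cut and directly exhibit the required subset $X\subseteq V(G)$. Let $G$ be a bipartite graph with a fixed proper $2$-colouring, and let $(A,B)$ denote the corresponding bipartition of $V(G)$.

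If $E(G)=\emptyset$, then any non-empty proper subset $X\subseteq V(G)$ satisfies $\partial_G(X)=\emptyset=E(G)$, so $E(G)$ is a cut (we may assume $\abs{V(G)}\geq 2$, as the statement is vacuous otherwise). Assuming instead that $E(G)\neq\emptyset$, at least one edge forces both $A$ and $B$ to contain a vertex, so $A$ is a non-empty proper subset of $V(G)$. By the defining property of a proper $2$-colouring, every edge of $G$ has exactly one endpoint in $A$ and one in $B$, hence
\begin{equation*}
    E(G)=\{e\in E(G)\mid \abs{e\cap A}=1\}=\partial_G(A).
\end{equation*}
This is precisely the definition of a cut, so $E(G)$ is a cut of $G$.

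The main obstacle here is essentially nil: once one writes down the bipartition, the equality $E(G)=\partial_G(A)$ is immediate. The only point requiring a brief remark is the degenerate case where one of the colour classes is empty, which is handled by the observation that this forces $E(G)=\emptyset$ and hence the empty cut suffices.
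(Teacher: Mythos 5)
The paper states this as an unproved \emph{Observation}, so there is no "paper proof" to compare against; it is treated as immediate from the definitions. Your argument is exactly the standard one — taking $X$ to be one colour class $A$ of a bipartition and noting $E(G)=\partial_G(A)$ — and it is correct. (Two tiny remarks: the paper's definition of cut contains an obvious typo, writing $X\subseteq E(G)$ where $X\subseteq V(G)$ is meant, which you silently and correctly repaired; and the statement is genuinely false for graphs on at most one vertex since no non-empty proper vertex subset exists, so saying it is "vacuous otherwise" is a slight abuse, though harmless in context since cuts are only ever considered on $2$\nobreakdash-connected blocks in the paper.)
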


\begin{proposition}[Follows from~\cite{GalluccioLV2001,Kaminski2012,RobertsonS1993}]
    \label{prop:maxcutsinglecrossing}
    There is a function~$g_{\ref{prop:maxcutsinglecrossing}}$ and an algorithm that, 
    given a positive integer~$k$, a polynomial~$p$, and a $p$-bounded edge-weighted graph ${(G,w)}$ that excludes $\mathscr{U}_k$ as a minor, 
    in time~${\mathcal{O}(g_{\ref{prop:maxcutsinglecrossing}}(k) \abs{V(G)}^{\nicefrac{9}{2}} p(\abs{V(G)}))}$, 
    finds a maximum weight cut of~${(G,w)}$. 
\end{proposition}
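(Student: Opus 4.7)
The plan is to combine the structural decomposition of \Cref{prop:singlecrossing} with a dedicated planar \textsc{Maximum Weight Cut} subroutine, stitched together by a standard tree-decomposition dynamic program. First, since $G$ excludes $\mathscr{U}_k$ as a minor, I would invoke the algorithm of \Cref{prop:singlecrossing} to obtain, in time $\mathcal{O}(g_{\ref{prop:singlecrossing}}(k)\abs{V(G)}^3)$, a tree-decomposition $(T,\beta)$ of $\mathcal{P}$-blind-width at most $f_{\ref{prop:singlecrossing}}(k)$ in which every bag $\beta(t)$ of size exceeding $f_{\ref{prop:singlecrossing}}(k)$ has adhesion at most $3$ and planar torso.

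Next, I would root $T$ arbitrarily and, for each node $t$ with adhesion set $S_t$ to its parent, maintain a table indexed by bipartitions $(A_t,B_t)$ of $S_t$. The entry $M_t(A_t,B_t)$ stores the maximum total weight of edges of $G_{T_t}$ cut by any bipartition $(X,V(G_{T_t})\setminus X)$ extending $(A_t,B_t)$. Since $\abs{S_t}\le \max\{3,f_{\ref{prop:singlecrossing}}(k)\}$, this table has bounded size. At a small bag the merge step is a brute-force enumeration over the $2^{\mathcal{O}(f_{\ref{prop:singlecrossing}}(k))}$ bipartitions of $\beta(t)$, each combined with the best admissible child values. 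At a bag with planar torso, the merge is recast, for each of the $\mathcal{O}(1)$ prescribed bipartitions of $S_t$, as a single planar \textsc{Maximum Weight Cut} instance: for each child $d_i$ one replaces the subtree by a planar gadget attached to the at most three portals in $S_{d_i}\subseteq\beta(t)$ whose cut values reproduce the table $M_{d_i}$ up to an additive constant; the prescribed bipartition of $S_t$ is enforced by adding two external vertices linked to $A_t$ and $B_t$ through edges of sufficiently large weight.

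The key subroutine is \textsc{Maximum Weight Cut} on a planar edge-weighted graph, for which I would use Hadlock's classical duality: in the planar dual $G^*$, the complement of the dual image of any cut of $G$ is a $T$-join, where $T$ is the set of dual vertices corresponding to odd faces of $G$. Hence maximising the cut weight in $G$ reduces to minimising the weight of a $T$-join in $G^*$, which in turn reduces to minimum weight perfect matching in a complete graph on $T$ whose edge weights come from shortest-path distances in $G^*$. Using the planar weighted matching routine underlying~\cite{GalluccioLV2001}, each such call takes time $\mathcal{O}(\abs{V(G)}^{7/2} p(\abs{V(G)}))$; summed over the $\mathcal{O}(\abs{V(G)})$ nodes of $T$, and multiplied by the $2^{\mathcal{O}(f_{\ref{prop:singlecrossing}}(k))}$ table entries per node, this produces the claimed bound.

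The principal obstacle I anticipate is engineering the per-child gadgets so that they simultaneously (i) realise, up to an additive constant, the required cut-value function on at most three portal vertices, (ii) preserve planarity when glued into the planar torso of $\beta(t)$, and (iii) keep the resulting weights polynomial in $\abs{V(G)}$. This requires a careful case analysis on the size of each adhesion $S_{d_i}$ and on the bipartitions imposed on it; such an analysis is essentially carried out in~\cite{Kaminski2012} for the unweighted case and can be adapted to the $p$-bounded weighted setting by scaling the penalty edges enforcing the $S_t$-bipartition by $\Theta(\abs{V(G)}^2 \cdot p(\abs{V(G)}))$, which keeps the auxiliary instance $p'$-bounded for a polynomial $p'$.
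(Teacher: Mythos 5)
The paper supplies no proof of this proposition; it is flagged as ``Follows from~\cite{GalluccioLV2001,Kaminski2012,RobertsonS1993}'' and left as a citation. Your proposal accurately reconstructs what those references provide: the Robertson--Seymour decomposition of single-crossing-minor-free graphs into small pieces and planar pieces of adhesion at most~$3$ (which the paper packages as \Cref{prop:singlecrossing}), a dynamic program over that tree-decomposition whose tables are indexed by bipartitions of the bounded adhesion sets, and the Hadlock planar-duality reduction of planar \textsc{Max Cut} to $T$-join and thence to planar weighted matching~\cite{GalluccioLV2001} at the planar-torso bags, with per-child gadgets attached to the $\le 3$ portals of each adhesion. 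That is precisely the argument of Kamiński~\cite{Kaminski2012}, adapted as you note to the $p$-bounded weighted setting, and your runtime accounting (one planar matching call per table entry per node, bounded crudely by $\mathcal{O}(\abs{V(G)}) \cdot \abs{V(G)}^{7/2} \cdot p(\abs{V(G)}) \cdot 2^{\mathcal{O}(f_{\ref{prop:singlecrossing}}(k))}$) lands on the stated $\mathcal{O}(g(k)\abs{V(G)}^{9/2}p(\abs{V(G)}))$; so the proposal is correct and matches the route the paper intends via its citations.
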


Let us now discuss how cuts of a graph relate to the cuts of their blocks.

\begin{lemma}
    \label{lem:cutInBlocks}
    Let~$G$, let~${F \subseteq E(G)}$, and let~$\mathcal{B}$ denote the set of blocks of~$G$. 
    Then~${F}$ is a cut of~$G$ if and only if~${F \cap E(B)}$ is a cut of~$B$ for every block~$B$ of~$G$. 
    In particular, if~$w$ is an edge-weighting of~$G$, then~${w(F) = \sum_{B \in \mathcal{B}} w(F_B)}$. 
\end{lemma}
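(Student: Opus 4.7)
The plan is to prove the lemma by using the standard bipartition characterisation of cuts: a subset $F\subseteq E(H)$ of the edge set of a graph $H$ is a cut of $H$ if and only if there is a map $c\colon V(H)\to\{0,1\}$ such that $F$ is precisely the set of bichromatic edges, i.e.\ the edges $uv$ with $c(u)\neq c(v)$. I would quickly verify this equivalence from the definition (given $F=\partial_H(X)$ set $c=\mathds{1}_X$; conversely, if $c$ witnesses $F$, take $X=c^{-1}(1)$), and then use it freely below. Allowing $X=\emptyset$ or $X=V(H)$ here (i.e.\ the empty cut) is convenient and causes no issue since the assertion only concerns equality of edge sets and weights.

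For the forward implication, assume $F$ is a cut of $G$ and let $c\colon V(G)\to\{0,1\}$ be a witnessing $2$-colouring. For each block $B\in\mathcal{B}$, the restriction $c\restricted V(B)$ is a $2$-colouring of $V(B)$ whose bichromatic edges within $E(B)$ are exactly $F\cap E(B)$ since $E(B)\subseteq E(G)$; hence $F\cap E(B)$ is a cut of $B$.

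For the reverse implication, suppose $F\cap E(B)$ is a cut of $B$ for every $B\in\mathcal{B}$, and let $c_B\colon V(B)\to\{0,1\}$ be a corresponding $2$-colouring for each $B$. The task is to amalgamate the $c_B$ into a single global $2$-colouring $c$ of $V(G)$ whose bichromatic edges form $F$. This is where the block-cut tree of $G$ (\Cref{prop:cutvertextree}) enters: fix an arbitrary root block $B_0$ and initialise $c\coloneqq c_{B_0}$ on $V(B_0)$. Process the remaining blocks in order of increasing distance from $B_0$ in the block-cut tree; each such block $B$ meets the previously processed part in a single cut vertex $v_B$, so we replace $c_B$ by its bitwise complement if necessary to ensure $c_B(v_B)$ equals the already-assigned value $c(v_B)$, and then set $c(u)\coloneqq c_B(u)$ for every $u\in V(B)$ not yet coloured. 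Since any two blocks intersect in at most a single cut vertex, $c$ is well-defined on all of $V(G)$, and by construction its set of bichromatic edges inside any block $B$ is $F\cap E(B)$. As every edge of $G$ lies in exactly one block, the bichromatic edges of $c$ are $\bigcup_{B\in\mathcal{B}}(F\cap E(B))=F$, so $F$ is a cut of $G$.

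The final weight identity is then immediate from the fact that $\{E(B)\mid B\in\mathcal{B}\}$ partitions $E(G)$: writing $F_B\coloneqq F\cap E(B)$, one has $F=\bigsqcup_{B\in\mathcal{B}}F_B$ and therefore $w(F)=\sum_{B\in\mathcal{B}}w(F_B)$. No step presents a real obstacle; the only care required is the gluing argument, which relies on the tree structure of the block-cut decomposition to avoid colour-consistency conflicts when flipping the local $2$-colourings.
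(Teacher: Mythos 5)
Your proof is correct, but it takes a genuinely different route from the paper's. The paper invokes the cycle-space characterisation of cuts --- a set $F\subseteq E(G)$ is a cut if and only if it meets every cycle of $G$ in an even number of edges (Diestel, Theorem~1.9.4) --- and then simply observes that every edge and every cycle of $G$ lies in a unique block, from which both directions of the equivalence are immediate and the whole argument is two sentences. You instead work with the bipartition witness directly: restricting a global $2$-colouring to each block gives the forward direction, and for the converse you amalgamate the per-block colourings by walking the block-cut tree from an arbitrary root block and flipping each local colouring (if necessary) at the unique cut vertex through which it attaches to the already-processed part. Your gluing argument is sound --- two distinct blocks meet in at most one vertex, and the tree structure ensures that when a new block is processed it intersects the coloured part in exactly that one cut vertex, so the flip is always consistent and $c$ is well-defined --- but it is considerably more work than the paper's one-liner. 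What your version buys is that it is fully constructive and self-contained, requiring no appeal to cut-space/cycle-space orthogonality; what the paper's version buys is brevity and a cleaner symmetry between the two directions. One small thing to tidy in your write-up: the block-cut tree requires $G$ connected (as in \Cref{prop:cutvertextree}), so you should either assume $G$ connected without loss of generality or run your gluing componentwise.
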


\begin{proof}
    An edge set~$F$ is a cut of a graph~$G$, if any only if~$F$ meets every cycle of~$G$ in an even number of edges (this follows directly from {Theorem~1.9.4} of~\cite{Diestel:GraphTheory5}). 
    Now since every edge of~$G$ is contained in a unique block and every cycle of~$G$ is contained in a unique block, the result follows from this characterisation of cuts. 
\end{proof}

With this, everything is in place for the proof of \Cref{thm:maxweightcut}. 

\begin{proof}[Proof of \cref{thm:maxweightcut}]
    By \Cref{lem:cutInBlocks} it suffices to find a maximum weight cut~$F_t$ in every block $G_t$ of $G$, where $t\in V(T)$, to find a maximum weight cut in~$G$.
    
    By \Cref{lem:find-spc} and \Cref{cor:bpb-f(BP-tw)}, every block is either bipartite or excludes~${\mathscr{U}_{f(k)}}$ as a minor, where~$f$ denotes the function from \Cref{lem:find-spc}. 
    We may now use \Cref{prop:maxcutsinglecrossing} and Observation~\ref{obs:bipartitecut} to obtain a maximum weight cut for each blocks of~$G$. 
    By \Cref{lem:cutInBlocks}, their union is a maximum weight cut of~$G$. 
\end{proof}

\section{Conclusion}

Recall that a graph~$H$ has the \emph{Erd\H{o}s-P\'osa property} for minors if there exists a function~$f$ such that for every~$k$, every graph~$G$ either contains~$k$ pairwise vertex-disjoint inflated copies of~$H$, or there is a set~${S \subseteq V(G)}$ of size at most~${f(k)}$ such that~${G-S}$ is $H$-minor-free.

A consequence of the Grid Theorem of Robertson and Seymour is that~$H$ has the Erd\H{o}s-P\'osa property for minors if and only if~$H$ is a planar graph.

By exchanging minor for \oddminor, we may define the Erd\H{o}s-P\'osa property for \oddminors.
By using \cref{prop:cleanwall} and arguments from~\cite{Reed1999} it is possible to prove that~$H$ has the Erd\H{o}s-P\'osa property for \oddminors\ if and only if~$H$ is planar and bipartite.

A graph~$H$ is said to have the \emph{half-integral Erd\H{o}s-P\'osa property} if there is a function~$f$ such that for every~$k$, every graph~$G$ either contains a collection of~$k$ inflated copies of~$H$ such that no vertex belongs to more than two of these inflated copies, or there exists a set~${S \subseteq V(G)}$ of size at most~${f(k)}$ such that~${G-S}$ is $H$-minor-free.

Liu recently proved that every graph has the half-integral Erd\H{o}s-P\'osa property for minors~\cite{Liu2022}. 
It appears natural to ask if this result can be extended to \oddminors.

Regarding our algorithmic results, it is well known that \textsc{Maximum Independent Set} is NP-complete on planar graphs~\cites{GareyJ1977,Murphy1992} and thus one cannot hope to find an FPT-algorithm with parameter ${(\mathcal{B}\cup\mathcal{P})}$-blind-treewidth.
\textsc{Maximum Cut}, on the other hand, is tractable on graphs excluding~$K_5$ as an \oddminor.
Finding the \oddminor-closed graph classes beyond classes of bounded $\mathcal{B}$-blind treewidth, or ${(\mathcal{B}\cup\mathcal{P})}$-blind-treewidth respectively, where one of these problems remains tractable appears to be worthwhile.

We conclude this paper by formulating the three questions above explicitly as \textbf{leading questions} for further directions of research.

\begin{question}
    Does every (connected) graph~$H$ have the half-integral Erd\H{o}s-P\'osa property for \oddminors?
\end{question}

\begin{question}
    For which (finite) families~$\mathcal{H}$ of graphs is there a polynomial time algorithm for \textsc{Maximum Independent Set} on the class of graphs that exclude all members of~$\mathcal{H}$ as \oddminors?
\end{question}

\begin{question}
    For which (finite) families~$\mathcal{H}$ of graphs is there a polynomial time algorithm for \textsc{Maximum Cut} on the class of graphs that exclude all members of~$\mathcal{H}$ as \oddminors?
\end{question}

\printbibliography

\end{document}